\let\proof\@undefined
\let\endproof\@undefined
\newtheorem{thm}{Theorem}
\theoremstyle{definition}
\theoremstyle{remark}
\theoremstyle{plain}
\theoremstyle{remark}
\theoremstyle{plain}
\newtheorem{cor}{Corollary}
\theoremstyle{plain}
\newtheorem{lem}{Lemma}
\newcommand{\oper}[1]{\mathcal{#1}}
\newcommand{\norm}[1]{\left\|#1\right\|}
\newcommand{\inner}[1]{\left<#1\right>}
\newcommand{\R}{\mathbb{R}}
\newcommand{\C}{\mathbb{C}}
\newcommand{\E}{\mathbb{E}}
\newcommand{\Prob}{\mathbb{P}}
\DeclareMathOperator*{\sgn}{sgn}
\DeclareMathOperator*{\Var}{Var}
\DeclareMathOperator{\diag}{diag}
\begin{document}

\title{Robust Lasso with missing and grossly corrupted observations}

\author{Nam H. Nguyen 
        and Trac D. Tran,~\IEEEmembership{Senior Member,~IEEE}
\thanks{This work has been partially supported by the National Science Foundation (NSF) under Grants CCF-1117545 and CCF-0728893; the Army Research Office (ARO) under Grant 58110-MA-II and Grant
60219-MA; and the Office of Naval Research (ONR) under Grant N102-183-0208.}
\thanks{Nam H. Nguyen and Trac D. Tran are with the Department of Electrical and Computer Engineering, the Johns Hopkins University, Baltimore, MD, 21218 USA (email: {nam, trac}@jhu.edu ).}
\thanks{Partial of this work is presented at NIPS 2011 conference in Granda, Spain, December 2011.}
}


\date{}
\maketitle

\begin{abstract}
This paper studies the problem of accurately recovering a sparse vector $\beta^{\star}$ from highly corrupted linear measurements $y = X \beta^{\star} + e^{\star} + w$ where $e^{\star}$ is a sparse error vector whose nonzero entries may be unbounded and $w$ is a bounded noise. We propose a so-called extended Lasso optimization which takes into consideration sparse prior information of both $\beta^{\star}$ and $e^{\star}$. Our first result shows that the extended Lasso can faithfully recover both the regression as well as the corruption vector. Our analysis relies on the notion of extended restricted eigenvalue for the design matrix $X$. Our second set of results applies to a general class of Gaussian design matrix $X$ with i.i.d rows $\oper N(0, \Sigma)$, for which we can establish a surprising result: the extended Lasso can recover exact signed supports of both $\beta^{\star}$ and $e^{\star}$ from only $\Omega(k \log p \log n)$ observations, even when the fraction of corruption is arbitrarily close to one. Our analysis also shows that this amount of observations required to achieve exact signed support is indeed optimal.

\end{abstract}

\section{Introduction}

One of the central problems in statistics is the problem of linear regression in which the goal is to accurately estimate the regression vector $\beta^{\star} \in \R^p$ from the noisy observations
\begin{equation}
\label{eqt::linear observations}
y = X \beta^{\star} + w,
\end{equation}
where $X \in \R^{n \times p}$ is the measurement or design matrix, and $w \in \R^n$ is the stochastic observation vector noise. A particular situation recently attracted much attention from the research community concerns with the model in which the number of regression variables $p$ is larger than the number of observations $n$ ($p \geq n$). In such circumstances, without imposing some additional assumptions for this model, it is obvious that the problem is ill-posed, and thus the linear regression is not consistent. Accordingly, there have been various lines of work on high dimensional inference based on imposing different types of structure constraints such as sparsity and group sparsity (e.g. \cite{Tibshirani_Lasso_1996_J}, \cite{CT_Dantzig_2007_J}, \cite{YL_2006_J}, \cite{ZRY_2009_J}, \cite{ZY_Lasso_2006_J}, \cite{MY_Lasso_2009_J}, \cite{MB_Lasso_2008_J}, \cite{Wainwright_Lasso_2009_J}, \cite{CP_Lasso_2009_J}, \cite{BRT_Lasso_2009_J}, \cite{Zhang_2009_J}, \cite{BTW_2007_J}, \cite{Bunea_2008}, \cite{OWJ_2011_J}, \cite{HZ_2010_J}). Among them, the most popular model focused on sparsity assumption of the regression vector. To estimate $\beta$, a standard method, namely Lasso \cite{Tibshirani_Lasso_1996_J}, was proposed to use $l_1$-penalty as a surrogate function to enforce sparsity constraint.
\begin{equation}
\label{opt::Lasso}
\min_{\beta} \frac{1}{2n} \norm{y - X \beta}_2^2 + \lambda \norm{\beta}_1,
\end{equation}
where $\lambda$ is the positive regularization parameter and the $\ell_1$-norm of the regression vector is $\norm{\beta}_1$, defined as $\norm{\beta}_1 = \sum_{i=1}^p |\beta_i|$.

Within the past few years, there has been numerous studies to understand the $\ell_1$-regularization aspect of sparse regression models (e.g. \cite{ZY_Lasso_2006_J}, \cite{MY_Lasso_2009_J}, \cite{MB_Lasso_2008_J}, \cite{Wainwright_Lasso_2009_J}, \cite{CP_Lasso_2009_J}, \cite{BRT_Lasso_2009_J}, \cite{Zhang_2009_J}). These works are mainly characterized by the type of the loss functions considered. For instance, authors \cite{CP_Lasso_2009_J}, \cite{Zhang_2009_J} seek to obtain a regression estimate $\widehat{\beta}$ that delivers small prediction error while others \cite{BRT_Lasso_2009_J}, \cite{MY_Lasso_2009_J} \cite{Zhang_2009_J} seek to produce a regressor with minimal parameter estimation error, which is measured by the $\ell_2$-norm of $(\widehat{\beta}-\beta^{\star})$. Another line of work (e.g. \cite{Tropp_Relax_2006_J}, \cite{ZY_Lasso_2006_J}, \cite{Wainwright_Lasso_2009_J}) considers the variable selection in which the goal is to obtain an estimate that correctly identifies the support of the true regression vector. To achieve low prediction or parameter estimation loss, it is now well known that it is both sufficient and necessary to impose certain lower bounds on the smallest singular values of the design matrix (e.g. \cite{MB_Lasso_2008_J}, \cite{BRT_Lasso_2009_J}), while the notion of small mutual coherence for the design matrix (e.g. \cite{CP_Lasso_2009_J}, \cite{ZY_Lasso_2006_J}, \cite{Wainwright_Lasso_2009_J}) is required to achieve accurate variable selection.



We notice that all previous work relies on the assumption that the observation noise has bounded energy. Without this assumption, it is very likely that the estimated regressor is either not reliable or we fail to identify the correct support. With this observation in mind, in this paper, we extend the linear model (\ref{eqt::linear observations}) by considering the noise with unbounded energy. It is clear that if all entries of $y$ are corrupted by large errors, then it is impossible to faithfully recover the regression vector $\beta^{\star}$. However, in many practical applications such as face recognition, acoustic recognition and dense sensor network, only a portion of the observation vector is contaminated by gross error. Formally, we have the mathematical model
\begin{equation}
\label{eqt::extend linear observations}
y = X \beta^{\star} + e^{\star} + w,
\end{equation}
where $e^{\star} \in \R^n$ is the sparse error whose locations of nonzero entries are unknown and whose magnitudes can be arbitrarily large whereas $w$ is the conventional noise vector with bounded entries. In this paper, we assume that $w$ has a multivariate Gaussian $\oper N(0,\sigma^2 I_{n\times n})$ distribution. This model also includes as a special case the missing data problem in which all the entries of $y$ is not fully observed, but some are missing. This problem is particularly important in computer vision and biology applications. If some entries of $y$ are missing, the nonzero entries of $e^{\star}$ whose locations are associated with the missing entries of the observation vector $y$ have the same values as entries of $y$ but with reverse polarity.

The problems of faithfully recovering data under gross error has gained increasing attentions recently with many interesting practical applications (e.g. \cite{WYGSM_Face_2009_J}, \cite{EV_2009_C}, \cite{LDB_corruption_2009_C}) as well as theoretical consideration (e.g. \cite{WM_denseError_2010_J}, \cite{LWW_grossError_2010_C}, \cite{NT_GrossError_2010_J}, \cite{Li_2011_J}). Another recent line of research on recovering the data from grossly corrupted measurements has been also studied in the context of robust principal component analysis (RPCA) (e.g. \cite{CLMW_RobustPCA_2009_J}, \cite{XCS_RPCA_2010_C}, \cite{ANW_RPCA_2011_C}). Let us consider several examples as illustrations.
\begin{itemize}
    \item \textit{Face recognition.} The model (\ref{eqt::extend linear observations}) has been proposed by Wright \textit{et al.} \cite{WYGSM_Face_2009_J} in the context of face recognition. In this problem, a face test sample $y$ is assumed to be represented as a linear combination of training faces in the dictionary $X$. Hence, $y = X \beta$ where $\beta$ is the coefficient vector used for classification. However, it is often the case that the testing face of interest is occluded by unwanted objects such as glasses, hats, scarfs, etc. These occlusions, which occupy a portion of the test face, can be considered as the sparse error $e^{\star}$ in the model (\ref{eqt::extend linear observations}).



    \item \textit{Subspace clustering.} An important problem in high-dimensional data analysis is to cluster the data points into multiple subspaces. A recent work of Elhamifar and Vidal \cite{EV_2009_C} show that this problem can be solved by expressing each data point as a sparse linear combination of all other data points. Coefficient vectors recovered from solving the Lasso problems are then employed for clustering. If the data points are represented as a matrix $X$, then we wish to find a sparse coefficient matrix $B$ such that $X = X B$ and $\diag(B) = 0$. When the data is missing or contaminated by outliers, the authors formulate the problem as $X = XB + E$ and minimize a sum of two $\ell_1$-norms with respect to both $B$ and $E$ \cite{EV_2009_C}.




    \item \textit{Sparse graphical model estimation.} Given a random vector $x \in \R^p$ with unkown covariance matrix $\Sigma$, the goal is to estimate $\Sigma$ or its precision matrix $\Omega = \Sigma^{-1}$ from $n$ independent copies of $x$: $x_1, ..., x_n \in \R^p$. Assuming that the matrix $\Omega$ is sparse, Meinshausen and B$\ddot{\text{u}}$hlmann \cite{MB_Lasso_2008_J} propose to solve the following Lasso problem
        $$
        \min_B \frac{1}{2n} \norm{X - XB}_F^2 + \lambda \norm{B}_1 \quad \text{s.t.  }\diag(B) = 0,
        $$
        where $X = [x_1^T,...,x_n^T]$. The precision matrix $\Omega$ can be estimated via the coefficient matrix $B$. When the data $X$ is partially observed/missing, a more robust method is to take into account the sparsity assumption and minimize
        $$
        \min_{B,E} \frac{1}{2n} \norm{X - XB - E}_F^2 + \lambda_b \norm{B}_1 + \lambda_e \norm{E}_1
        $$
        subject to $\diag(B) = 0$, where $E$ represents partially missing information. Though this problem is quite different from the aforementioned subspace clustering problem, the technical approach is considerably similar.

   	\item \textit{Sensor network.} In this model, a network of sensors collect measurements of a signal $\beta^{\star}$ independently by simply projecting $\beta^{\star}$ onto the row vectors of a sensing matrix $X$, $y_i = \inner{X_i, \beta^{\star}}$ \cite{HBRN_2008_J}. The measurements $y_i$ are then sent to the central hub for analysis. However, it is highly likely that a small percentage of sensors might fail to send the measurements correctly and sometimes even report totally irrelevant measurements. Therefore, it is more appropriate to employ the observation model in (\ref{eqt::extend linear observations}) than the model in (\ref{eqt::linear observations}).



\end{itemize}


It is worth noticing that in the aforementioned applications, $e^{\star}$ always plays the role as the sparse (undesired) error. However, in other applications, $e^{\star}$ might actually contain meaningful information, and thus necessary to be recovered. An example of this kind of problem is signal separation, in which $\beta^{\star}$ and $e^{\star}$ are considered as two distinct signal components (e.g. video or audio). Furthermore, in applications such as classification and clustering, the assumption that the test sample $y$ is a linear combination of a few training samples in the dictionary (playing the role of the design matrix) $X$ might be violated. The sparse component $e^{\star}$ can thus be seen as the compensation for the linear regression model mismatch.



Given the observation model (\ref{eqt::linear observations}) and the sparsity assumptions on both regression vector $\beta^{\star}$ and error $e^{\star}$, we propose the following convex minimization to estimate the unknown regression vector $\beta^{\star}$ as well as the error vector $e^{\star}$.
\begin{equation}
\label{opt::extended lasso}
\min_{\beta, e} \frac{1}{2n} \norm{y - X \beta - e}_2^2 + \lambda_{n,\beta} \norm{\beta}_1 + \lambda_{n,e} \norm{e}_1,
\end{equation}
where $\lambda_{n,\beta}$ and $\lambda_{n,e}$ are positive regularization parameters. This optimization, which we call \textit{extended Lasso}, can be seen as a generalization of the Lasso program. Indeed, by setting $\lambda_{n,e} = 0$, (\ref{opt::lasso with sparse noise}) returns to the standard Lasso. The additional regularization associated with the error $e$ encourages sparsity of the reconstructed vector, where the penalty parameter $\lambda_{n,e}$ controls its sparsity level. In this paper, we focus on the following questions: what are necessary and sufficient conditions for the ambient dimension $p$, the number of observations $n$, the sparsity index $k$ of the regression $\beta^{\star}$ and the fraction of corruption in $e^{\star}$ so that (i) the extended Lasso is able (or unable) to recover the exact support sets of both $\beta^{\star}$ and $e^{\star}$? (ii) the extended Lasso is able to recover $\beta^{\star}$ and $e^{\star}$ with small prediction error and parameter error? We are particularly interested in understanding the asymptotic situation where the the fraction of error gets arbitrarily close to $100\%$.

In this paper, we assume normalization of the design matrix $X$. Specifically, we assume the $\ell_2$-norm of columns of the matrix $X$ are $\Theta(\sqrt{n})$. Moreover, without loss of generality, we use the following observation model in replacement for the model in (\ref{eqt::extend linear observations})
\begin{equation}
\label{eqt::extend linear observations 2}
y = X \beta^{\star} + \sqrt{n} e^{\star} + w.
\end{equation}
As we can see, columns of both the design matrix $X$ and the matrix $\sqrt{n}I_{n\times n}$ has the same scale. Thus, this model's change only helps our results in the next sections to be more interpretable. The optimization (\ref{opt::extended lasso}) is now converted to the following problem
\begin{equation}
\label{opt::lasso with sparse noise}
\min_{\beta, e} \frac{1}{2n} \norm{y - X \beta - \sqrt{n} e}_2^2 + \lambda_{n,\beta} \norm{\beta}_1 + \lambda_{n,e} \norm{e}_1,
\end{equation}


\textit{Previous work.} The problem of recovering the estimation vector $\beta^{\star}$ and error $e^{\star}$ is originally proposed by Wright \textit{et al.} in the appealing paper \cite{WYGSM_Face_2009_J} and analyzed by Wright and Ma \cite{WM_denseError_2010_J}. In the absence of the stochastic noise $w$ in the observation model (\ref{eqt::extend linear observations}), the authors propose to estimate ($\beta^{\star}, e^{\star}$) by solving the following linear program
\begin{equation}
\label{opt::extended L1}
\min_{\beta, e} \norm{\beta}_1 + \norm{e}_1 \quad\text{s.t.} \quad y = X \beta + \sqrt{n} e.
\end{equation}


From a different viewpoint, in the intriguing paper \cite{LMJ_2012_J}, Lee \textit{et al.} study a general loss function model. To obtain more flexibility in controlling the undesirable influence of the model, they introduce a case-specific parameter vector $e \in \R^n$ for the observation vectors and modify the optimization to take into account this parameter. Interestingly, the model turns out to be coincident with (\ref{opt::lasso with sparse noise}) when applying to the linear regression problem with Lasso penalty. Extensive simulations have shown that the model (\ref{opt::lasso with sparse noise}) is considerably robust to noise. However, no theoretical analysis is provided in the paper.

In another direction, the problem of robust Lasso under corrupted observations is also carefully investigated by Wang \textit{et al.} \cite{WLJ_2007_J}. In this appealing paper, instead of using the quadratic loss function as in Lasso, the authors propose to employ LAD-Lasso criterion:
\begin{equation}
\label{opt::LAD}
\min_{\beta} \norm{y - X\beta}_1 + \sum_{j=1}^p \lambda_j |x_j|.
\end{equation}
This optimization combines the LAD criterion and Lasso penalty, where the first term is designed to be robust to outliers and the second term again promotes the sparse representation of the estimator. However, due to the lack of the quadratic loss that enforce the estimation to be consistence with the observation in $\ell_2$-norm sense, this optimization might not guarantee to deliver a solution that satisfies small prediction error.

On the theoretical side, the result of \cite{WM_denseError_2010_J} is asymptotic in nature. The analysis reveals that for a class of Gaussian design matrix with i.i.d entries, the optimization (\ref{opt::extended L1}) can recover $(\beta^{\star}, e^{\star})$ precisely with high probability even when the fraction of corruption is arbitrarily close to one. However, the result only holds under rather stringent conditions. In particularly, the authors require the number of observations $n$ grow proportionally with the ambient dimension $p$, and the sparsity index $k$ is a very small portion of $n$. These conditions is of course far from the optimal bound in compressed sensing (CS) and statistics literature (recall $k \leq O(n/\log p)$ is sufficient in conventional analysis (e.g. \cite{CRT_CS_2004_J}, \cite{Wainwright_Lasso_2009_J}).

Another line of work has also focused on the optimization (\ref{opt::extended L1}). In both Laska \textit{et al.} \cite{LDB_corruption_2009_C} and Li \textit{et al.}, \cite{LWW_grossError_2010_C}, the authors establish that for Gaussian design matrix $X$, if $n \geq C (k+s) \log p$ where $s$ is the sparsity level of $e^{\star}$, then the recovery is exact. This follows from the fact that the combination matrix $[X, \text{ } I]$ obeys the restricted isometry property, a well-known property in compressed sensing used to guarantee exact recovery of sparse vectors via $\ell_1$-minimization. These results, however, do not allow the fraction of corruption to come close to unity. Also related to our paper is recent work by Studer \textit{et al.}, \cite{SKPB_2011_C} \cite{SB_2011_J} in which the authors establish different results for deterministic design matrix.

Among the previous work, the most closely related to our current paper are recent results by Li \cite{Li_2011_J} and Nguyen \textit{et al.} \cite{NT_GrossError_2010_J} in which a positive regularization parameter $\lambda$ is employed to control the sparsity of $e^{\star}$. Using different methods, both sets of authors show that as $\lambda$ is deterministically selected to be $1/\sqrt{\log p}$ and $X$ is a sub-orthogonal matrix, whose columns are selected uniformly at random from columns of an orthogonal matrix, then the solution of following optimization (\ref{opt::extended L1 with lambda}) is exact even a constant fraction of observation is corrupted. Moreover, \cite{Li_2011_J} establishes a similar result with Gaussian design matrix in which the number of observations is only on the order of $k \log p$ $-$ a level that is known to be optimal in both CS and statistics community.
\begin{equation}
\label{opt::extended L1 with lambda}
\min_{\beta, e} \norm{\beta}_1 + \lambda \norm{e}_1 \quad\text{s.t.} \quad y = X \beta + \sqrt{n} e.
\end{equation}

\textit{Our contribution.} This paper considers a general setting in which the observations are contaminated by both sparse and dense errors. We allow the corruptions to linearly grow with the number of observations and have arbitrarily large magnitudes. We establish a general scaling of the quadruplet $(n,p,k,s)$ such that the proposed extended Lasso stably recovers both the regression and the corruption vector. Of particular interest to us are the answer to the following questions:
\begin{enumerate}
    \item [(a)] First, under what scalings of $(n,p,k,s)$ does the extended Lasso obtain the unique solution with small estimation error?
    \item [(b)] Second, under what scalings of $(n,p,k)$ does the extended Lasso obtain the exact signed support recovery even when almost all observations are corrupted?
    \item [(c)] Third, under what scalings of $(n,p,k,s)$ that no solution of the extended Lasso specifying the correct signed support exists?
\end{enumerate}

To answer for the first question, we introduce a notion of \textit{extended restricted eigenvalue} for a matrix $[X, \text{ } I]$ where $I$ is the identity matrix. We show that this property is satisfied for a general class of random Gaussian design matrices. The answers to the last two questions requires stricter conditions on the design matrix. In particular, for random Gaussian design matrix with i.i.d rows $\oper N(0,\Sigma)$, we rely on two standard assumptions: invertibility and mutual incoherence. Our analysis in this setting is relied on the elegant technique introduced by Wainwright \cite{Wainwright_Lasso_2009_J}.

If we denote $Z = [X,\text{ } I]$ where $I$ is an identity matrix and $\overline{\beta} = [\beta^{\star^T}, \text{ } e^{\star^T}]^T$, then the observation vector $y$ is reformulated as $y = Z \overline{\beta} + w$, which is the same as the standard Lasso model. However, previous results (e.g. \cite{BRT_Lasso_2009_J}, \cite{Wainwright_Lasso_2009_J}) applying to random Gaussian design matrix are irrelevant to this setting since $Z$ no longer behaves like a Gaussian matrix. To establish the theoretical analysis, we need a deeper study on the interaction between the Gaussian and identity matrices. By exploiting the fact that the matrix $Z$ consists of two components where one has a special structure, our analysis reveals an interesting phenomenon: extended Lasso can accurately recover both the regressor $\beta^{\star}$ and the corruption $e^{\star}$ even when the fraction of corruption is up to $100\%$. We measure the recoverability of these variables under two criterions: parameter accuracy and feature selection accuracy. Moreover, our analysis can be extended to the situation in which the identity matrix can be replaced by a tight frame $D$ as well as extended to other models such as group Lasso or matrix Lasso with sparse error.






\textit{Notation.} We summarize here some standard notation employed throughout the paper. We reserve $T$ and $S$ as the sparse support of $\beta^{\star}$ and $e^{\star}$, respectively. Given a design matrix $X \in \R^{n \times p}$ and subsets $S$ and $T$, we use $X_{ST}$ to denote the $|S| \times |T|$ submatrix obtained by extracting those rows indexed by S and columns indexed by $T$. For a vector $h \in \R^p$, we use the conventional notations for $\ell_1$- and $\ell_2$-norm of $h$ as $\norm{h}_1 = \sum_{i=1}^p |h_i|$ and $\norm{h}_2 = (\sum_{i=1}^p h_i^2)^{1/2}$, respectively. For a matrix $X \in \R^{n \times p}$, we denote $\norm{X}$ and $\norm{X}_{\infty}$ as the operator norms. In particular, $\norm{X}$ is denoted as the spectral norm and $\norm{X}_{\infty}$ as the $\ell_{\infty}/\ell_{\infty}$ operator norm: $\norm{X}_{\infty} = \max_i \sum_{j=1}^p |x_{ij}|$.

We use the notation $C_1, C_2, c_1, c_2,$ etc., to refer to positive constants, whose value may change from line to line. Given two functions $f$ and $g$, the notation $f(n) = \oper O(g(n))$ means that there exists a constant $c < + \infty$ such that $f(n) \leq c g(n)$; the notation $f(n) = \Omega (g(n))$ means that $f(n) \geq c g(n)$ and the notation $f(n) = \Theta(g(n))$ means that $f(n) = \oper O (g(n))$ and $f(n) = \Omega(g(n))$. The symbol $f(n) = o(g(n))$ indicates that $f(n)/g(n) \rightarrow 0$.

\textit{Organization.} The remainder of this paper is structured as follows. Section \ref{sec::main results} provides the main results, detailed discussions and their consequences. Section \ref{sec::simulations} performs extensive experiments to validate theoretical results presented in the previous section. Section \ref{sec::proof 1} provides analysis of the estimation error, whereas Sections \ref{sec::proof 2} and \ref{sec::proof 3} deliver proofs of the necessary and sufficient conditions for the exact signed support recovery. Several technical aspects of these proofs and some well-known concentration inequalities are presented in the Appendix. We conclude the paper in Section \ref{sec::conclusion} with more discussion.

\section{Main results}
\label{sec::main results}

In this section, we provide precise statements for the main results of this paper. In the first sub-section, we establish the parameter estimation and provide a deterministic result which is based on the notion of extended restricted eigenvalue. We further show that the random Gaussian design matrix satisfies this property with high probability. The next sub-section considers feature estimation. We establish conditions for the design matrix such that the solution of the extended Lasso has the exact signed supports.

\subsection{Parameter estimation}

As in conventional Lasso, to obtain a low parameter estimation bound, it is necessary to impose conditions on the design matrix $X$. In this paper, we introduce the notion of \textit{extended restricted eigenvalue} (extended RE) condition. Let $\C$ be a restricted set, we say that the matrix $X$ satisfies the extended RE assumption over the set $\C$ if there exists some $\kappa_l > 0$ such that
\begin{equation}
\label{eqt::extended RE}
\frac{1}{\sqrt{n}} \norm{Xh + \sqrt{n} f}_2 \geq \kappa_l (\norm{h}_2 + \norm{f}_2) \quad \text{for all } (h, f) \in \C,
\end{equation}

\noindent where the restricted set $\C$ of interest is defined with $\lambda := \frac{\lambda_{n,e}}{\lambda_{n,\beta}}$ as follows
\begin{multline}
\label{eqt::set C}
\C := \{ (h, f) \in \R^p \times \R^n \text{ } | \\
\text{ } \norm{h_{T^c}}_1  + \lambda \norm{f_{S^c}}_1 \leq 3 \norm{h_T}_1 + 3\lambda \norm{f_S}_1 \}.
\end{multline}

This assumption is a natural extension of the restricted eigenvalue condition and restricted strong convexity considered in \cite{BRT_Lasso_2009_J} ,\cite{RWY_RE_2010_J} and \cite{NRWY_2010_J}. In the absence of a vector $f$ in the equation (\ref{eqt::extended RE}) and in the set $\C$, this condition returns to the restricted eigenvalue defined in \cite{BRT_Lasso_2009_J}. As discussed in more detail in \cite{BRT_Lasso_2009_J}  and \cite{GB_Lasso_2009_J}, restricted eigenvalue is among the weakest assumption on the design matrix such that the solution of the Lasso is consistent.


With this assumption at hand, we now state the first theorem
\begin{thm}
\label{thm::parameter estimation}
Consider the optimal solution $(\widehat{\beta}, \widehat{e})$ to the optimization problem (\ref{opt::lasso with sparse noise}) with regularization parameters chosen as
\begin{equation}
\lambda_{n,\beta} = \frac{2}{\gamma} \frac{\norm{X^* w}_{\infty}}{n} \quad \text{and} \quad \lambda_{n,e} = \frac{2 \norm{w}_{\infty}}{\sqrt{n}},
\end{equation}
where $\gamma \in (0,1]$. Assuming that the design matrix $X$ obeys the extended RE, then the error set $(h,f) = (\widehat{\beta} - \beta^{\star}, \widehat{e}-e^{\star})$ is bounded by
\begin{equation}
\label{inq::L2 error bound}
\norm{h}_2 + \norm{f}_2 \leq 3 \kappa^{-2}_l \left( \lambda_{n,\beta} \sqrt{k} + \lambda_{n,e} \sqrt{s} \right).
\end{equation}
\end{thm}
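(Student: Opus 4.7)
The plan is to follow the now-standard basic-inequality argument for Lasso-type estimators, adapted to the joint variable $(\beta, e)$. Starting from the optimality of $(\widehat{\beta}, \widehat{e})$ in (\ref{opt::lasso with sparse noise}) and the identity $y - X\beta^{\star} - \sqrt{n}e^{\star} = w$, I would compare the objective at $(\widehat{\beta}, \widehat{e})$ with the objective at $(\beta^{\star}, e^{\star})$, expand $\|w - Xh - \sqrt{n}f\|_2^2$, and cancel the common $\|w\|_2^2$ term. This yields
\begin{equation*}
\tfrac{1}{2n}\norm{Xh + \sqrt{n}f}_2^2 \leq \tfrac{1}{n}\inner{w,\,Xh + \sqrt{n}f} + \lambda_{n,\beta}(\norm{\beta^\star}_1 - \norm{\widehat\beta}_1) + \lambda_{n,e}(\norm{e^\star}_1 - \norm{\widehat e}_1).
\end{equation*}
I would then bound the noise cross-term with H\"older, splitting it as $|\inner{X^*w,h}|/n + |\inner{w,f}|/\sqrt{n} \leq (\norm{X^*w}_\infty/n)\norm{h}_1 + (\norm{w}_\infty/\sqrt{n})\norm{f}_1$. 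The prescribed regularization parameters make these quantities at most $(\lambda_{n,\beta}/2)\norm{h}_1$ and $(\lambda_{n,e}/2)\norm{f}_1$, respectively.

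Next I would exploit decomposability of the $\ell_1$-norm on the true supports $T$ and $S$: since $\beta^\star$ is supported on $T$, $\norm{\beta^\star}_1 - \norm{\widehat\beta}_1 \leq \norm{h_T}_1 - \norm{h_{T^c}}_1$, and similarly for $e^\star$ on $S$. Collecting the bounds and using $\norm{h}_1 = \norm{h_T}_1 + \norm{h_{T^c}}_1$ (same for $f$) transforms the basic inequality into
\begin{equation*}
\tfrac{1}{2n}\norm{Xh + \sqrt{n}f}_2^2 + \tfrac{\lambda_{n,\beta}}{2}\norm{h_{T^c}}_1 + \tfrac{\lambda_{n,e}}{2}\norm{f_{S^c}}_1 \leq \tfrac{3\lambda_{n,\beta}}{2}\norm{h_T}_1 + \tfrac{3\lambda_{n,e}}{2}\norm{f_S}_1.
\end{equation*}
Dropping the nonnegative squared term and dividing by $\lambda_{n,\beta}$ shows that $(h,f)$ lies in the cone $\mathcal{C}$ defined in (\ref{eqt::set C}), which is exactly what is needed to invoke the extended RE condition.

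From there the finish is mechanical. I would retain the squared term, discard the two nonnegative $T^c, S^c$ pieces on the left, and apply extended RE to obtain $\tfrac{\kappa_l^2}{2}(\norm{h}_2 + \norm{f}_2)^2$ on the left. On the right, the Cauchy--Schwarz--type bounds $\norm{h_T}_1 \leq \sqrt{k}\,\norm{h}_2$ and $\norm{f_S}_1 \leq \sqrt{s}\,\norm{f}_2$ produce $\tfrac{3}{2}(\lambda_{n,\beta}\sqrt{k} + \lambda_{n,e}\sqrt{s})(\norm{h}_2 + \norm{f}_2)$. Dividing through by $\norm{h}_2 + \norm{f}_2$ delivers the claim \eqref{inq::L2 error bound}.

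The only step that requires real care, rather than routine algebra, is verifying that the cone containment $(h,f) \in \mathcal{C}$ holds with the \emph{same} constant $3$ that appears in the definition (\ref{eqt::set C}); this is dictated by the factor $2/\gamma$ in $\lambda_{n,\beta}$ (which allows $\gamma \in (0,1]$ and still absorbs the noise term into half the penalty) and the fact that the $\lambda_{n,e}$ prescription is tight with no slack. Once the cone inclusion is established, the extended RE assumption does all the remaining work, converting a bound on $\norm{Xh + \sqrt{n}f}_2$ into a simultaneous $\ell_2$-bound on the regression and corruption errors.
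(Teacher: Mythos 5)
Your proposal is correct and follows essentially the same route as the paper: the basic inequality from optimality, H\"older on the noise cross-term, decomposability of the $\ell_1$-norm over $T$ and $S$, cone containment in $\mathcal{C}$, and then the extended RE condition combined with $\norm{h_T}_1 \leq \sqrt{k}\norm{h}_2$ and $\norm{f_S}_1 \leq \sqrt{s}\norm{f}_2$. The only cosmetic difference is that the paper finishes with a case split yielding the slightly sharper bound $3\kappa_l^{-2}\max\{\lambda_{n,\beta}\sqrt{k},\lambda_{n,e}\sqrt{s}\}$, whereas you bound directly by the sum, which is exactly what the theorem statement asserts.
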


There are several interesting observations from this theorem

1) The error bound naturally split into two components related to the sparsity indices of $\beta^{\star}$ and $e^{\star}$. In addition, the error bound contains three quantity: the sparsity indices, regularization parameters, and the extended RE constant. If the terms related to the corruption $e^{\star}$ are omitted, then we obtain similar parameter estimation bound as in the standard Lasso (e.g. \cite{BRT_Lasso_2009_J}, \cite{NRWY_2010_J}).

2) The choice of regularization parameters $\lambda_{n,\beta}$ and $\lambda_{n,e}$ can be made explicitly: assuming $w$ is a Gaussian random vector whose entries are $\oper N(0,\sigma^2)$ and the design matrix has $\sqrt{n}$-normed columns, it is clear that with high probability, $\frac{1}{n}\norm{X^* w}_{\infty} \leq 2 \sqrt{\frac{\sigma^2 \log p}{n}}$ and $\frac{1}{\sqrt{n}}\norm{w}_{\infty} \leq 2 \sqrt{\frac{\sigma^2 \log n}{n}}$. Thus, it is sufficient to select $\lambda_{n,\beta} \geq \frac{4}{\gamma} \sqrt{\frac{\sigma^2 \log p}{n}}$ and $\lambda_{n,e} \geq 4 \sqrt{\frac{\sigma^2 \log n}{n}}$.

3) At the first glance, the parameter $\gamma$ does not seem to have any meaningful interpretation and the setting $\gamma = 1$ seems to be the best selection due to the smallest estimation error it can produce. However, this parameter actually controls the sparsity level of the regression vector with respect to the fraction of corruption. This relation is enforced via the restricted set $\C$.

In the following lemma, we show that the extended RE condition actually exists for a large class of random Gaussian design matrix whose rows are i.i.d zero mean with covariance $\Sigma$. Before stating the lemma, let us define some quantities operating on the covariance matrix $\Sigma$: $C_{\min} := \lambda_{\min} (\Sigma)$ is the smallest eigenvalue of $\Sigma$; $C_{\max} := \lambda_{\max} (\Sigma)$ is the largest eigenvalue of $\Sigma$; and $\xi(\Sigma) := \max_{i} \Sigma_{ii}$ is the maximal entry on the diagonal of the matrix $\Sigma$.

\begin{lem}
\label{lem::extended RE with Gaussian matrix}
Consider the random Gaussian design matrix whose rows are i.i.d $\oper N(0, \Sigma)$ and assume $C_{\max} \xi(\Sigma) = \Theta(1)$. Select
\begin{equation}
\label{eqt::lambda_n}
\lambda := \frac{\gamma}{\sqrt{\xi(\Sigma)}} \sqrt{\frac{\log n}{\log p}}.
\end{equation}
Then with probability greater than $1 - c_1 \exp(-c_2 n)$, the matrix $X$ satisfies the extended RE with parameter $\kappa_l = \frac{1}{4\sqrt{2}}$, provided that $ n \geq C \frac{\xi (\Sigma)}{C_{\min}} k \log p$ and $s \leq \min \left\{ C_1 \frac{n}{\gamma^2 \log n}, C_2 n \right\}$ for some small constants $C_1$, $C_2$.
\end{lem}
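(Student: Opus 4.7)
The plan is to establish the extended RE by decoupling the contribution of $Xh$, the contribution of $f$, and their interaction, and to exploit the sparsity structure of the cone together with the sample-size assumption.

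First, by homogeneity both of $\mathcal{C}$ and of the inequality \eqref{eqt::extended RE}, it suffices to verify the bound on $\mathcal{C}\cap\{\|h\|_2+\|f\|_2=1\}$. Writing $u:=\|h\|_2$ and $v:=\|f\|_2$, the cone-defining inequality combined with $\|h_T\|_1\le\sqrt{k}\,u$ and $\|f_S\|_1\le\sqrt{s}\,v$ immediately yields the structural consequences
\begin{equation*}
\|h\|_1 \le 4\sqrt{k}\,u + 4\lambda\sqrt{s}\,v,\qquad \|f\|_1 \le 4\lambda^{-1}\sqrt{k}\,u + 4\sqrt{s}\,v,
\end{equation*}
which will be the only way the cone enters the estimates.

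The main body of the proof is to expand
\begin{equation*}
\tfrac{1}{n}\|Xh+\sqrt{n}f\|_2^2 = \tfrac{1}{n}\|Xh\|_2^2 + \tfrac{2}{\sqrt n}\langle Xh,f\rangle + \|f\|_2^2
\end{equation*}
and lower bound the three pieces separately. For the first piece, I would invoke a Raskutti--Wainwright--Yu type restricted-eigenvalue inequality for Gaussian designs: with probability at least $1-c_1e^{-c_2 n}$, for every $h\in\R^p$,
\begin{equation*}
\tfrac{1}{n}\|Xh\|_2^2 \ge \tfrac{1}{2}\,h^\top\Sigma h - c_3\,\xi(\Sigma)\tfrac{\log p}{n}\,\|h\|_1^2.
\end{equation*}
Using $h^\top\Sigma h \ge C_{\min}u^2$ and substituting the $\|h\|_1$ bound above gives two error terms. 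The assumption $n\ge C(\xi/C_{\min})k\log p$ absorbs the $u^2$ error into $\tfrac{C_{\min}}{4}u^2$, while the definition $\lambda^2=(\gamma^2/\xi)\log n/\log p$ together with $s\le c\,n/(\gamma^2\log n)$ converts the $v^2$ error into an arbitrarily small multiple of $v^2$; the upshot is $\tfrac{1}{n}\|Xh\|_2^2 \ge \alpha_1 u^2 - \alpha_2 v^2$ with $\alpha_2$ as small as desired.

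The most delicate step is the uniform control of the cross term $\tfrac{2}{\sqrt n}\langle Xh,f\rangle=\tfrac{2}{\sqrt n}\langle h,X^\top f\rangle$ on the cone. For fixed $(h,f)$, conditionally Gaussianity gives $\langle h,X^\top f\rangle\sim\mathcal N(0,\|f\|_2^2\,h^\top\Sigma h)$ so this term has standard deviation $O(uv\sqrt{C_{\max}/n})$ per point. To upgrade to a uniform bound on $\mathcal{C}\cap\{u+v=1\}$, I would combine conditional Gaussian concentration with a discretization whose metric entropy is $O(k\log p+s\log n)$, a bound driven by the effective sparsity extracted in the first paragraph. Under $n\ge C(\xi/C_{\min})k\log p$ and $s\le c\,n/(\gamma^2\log n)$ this entropy is dominated by $n$, so after union bounding, $\tfrac{2}{\sqrt n}|\langle Xh,f\rangle|\le \alpha_3 uv$ with $\alpha_3$ as small as needed. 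Assembling the three pieces,
\begin{equation*}
\tfrac{1}{n}\|Xh+\sqrt n f\|_2^2 \ge \alpha_1 u^2 - (\alpha_2+\alpha_3\tfrac{1}{2})v^2 - \alpha_3\tfrac{1}{2}u^2 + v^2 \ge \tfrac{1}{32}(u+v)^2,
\end{equation*}
after suitably tuning the constants $C$ (in the sample-size bound) and $c_1$ (in the bound on $s$). Taking square roots yields the asserted bound with $\kappa_l=\tfrac{1}{4\sqrt 2}$.

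The main obstacle is this uniform cross-term control: $\mathcal{C}$ couples the sparsity-adapted neighborhoods of $h\in\R^p$ and $f\in\R^n$, and it is precisely the calibration of $\lambda$ with $\sqrt{\log n/\log p}$ together with the upper bounds on $k$ and $s$ that keeps the joint metric entropy of the restricted cone below the Gaussian concentration exponent of order $n$. Everything else is either the standard Gaussian RE estimate for $X$ alone or algebraic bookkeeping.
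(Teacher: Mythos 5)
Your decomposition of $\tfrac1n\norm{Xh+\sqrt n f}_2^2$ into $\tfrac1n\norm{Xh}_2^2+\norm{f}_2^2+\tfrac2{\sqrt n}\inner{Xh,f}$, the use of the Raskutti--Wainwright--Yu lower bound for $\tfrac1n\norm{Xh}_2^2$ (you use the squared form, the paper the unsquared form $\tfrac1{\sqrt n}\norm{Xv}_2\ge \tfrac{\sqrt{C_{\min}}}4\norm v_2-9\sqrt{\xi(\Sigma)\log p/n}\,\norm v_1$ -- these are interchangeable here), and the conversion of the cone constraint into $\norm h_1\le 4\sqrt k\norm h_2+O(\lambda\sqrt s)\norm f_2$ all coincide with the paper's argument; the sample-size and $s$ conditions are consumed in exactly the same way. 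Where you genuinely diverge is the cross term, which is also where the entire content of the lemma lives. The paper (its Lemma 3) does not use a net: it partitions $T^c$ into blocks of size $k$ and $S^c$ into blocks of size $s'$, bounds $\max_{i,j}\tfrac1{\sqrt n}\norm{X_{S_iT_j}}$ by Gaussian operator-norm concentration with a union bound over $\binom pk\binom n{s'}$ supports, and -- crucially -- chooses the block size $s'=C\,k\log p/(\gamma^2\log n)$ to balance the two factors $\lambda\sqrt{s'/k}$ and $\lambda^{-1}\sqrt{k/s'}$ that arise when the $\ell_1$ tails of $h$ and $f$ are traded against each other through the cone. Your entropy-based alternative is plausible in outline, but the stated covering exponent $O(k\log p+s\log n)$ is asserted rather than derived, and it misses the point that the cone couples $h$ and $f$ through $\lambda$: the bound $\norm{f_{S^c}}_1\le 3\lambda^{-1}\norm{h_T}_1+3\norm{f_S}_1$ means the effective sparsity of $f$ on the cone is $\max\{s,\,k/\lambda^2\}=\max\{s,\,\xi k\log p/(\gamma^2\log n)\}$, not $s$, so the entropy of the $f$-section is not simply $s\log n$. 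This is precisely why the paper's block size is $k\log p/(\gamma^2\log n)$ and is the only place the specific calibration $\lambda=\gamma\xi^{-1/2}\sqrt{\log n/\log p}$ is actually used; your sketch invokes that calibration only for the diagonal term. To close the argument along your route you would need to (i) justify the constant-scale covering number of the coupled cone section via a Maurey-type sparsification (which is essentially equivalent to the paper's block decomposition), and (ii) handle the approximation error of the bilinear form off the net, which itself requires a uniform bound on $\norm{X(h-h')}_2$ over the cone -- i.e., the very estimate being proved. As written, the cross-term step is a program rather than a proof.
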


We would like to offer a few remarks:

1) The choice of parameter $\lambda$ is nothing special here. When the design matrix is Gaussian and independent with the Gaussian stochastic noise $w$, we can easily show that $\frac{1}{n}\norm{X^* w}_{\infty} \leq 2 \sqrt{\xi(\Sigma) \delta^2 \log p}$ with probability at least $1 - 2 \exp(-\log p)$. Therefore, the selection of $\lambda$ follows from Theorem \ref{thm::parameter estimation}.

2) The proof of this lemma, shown in the Appendix, boils down to controling two terms
\begin{itemize}
    \item \textit{Restricted eigenvalue with $X$.}
    $$
    \frac{1}{n}\norm{X h}^2_2 + \norm{f}_2^2 \geq \kappa_r (\norm{h}^2_2 + \norm{f}^2_2) \quad \text{for all  } (h,f) \in \C.
    $$
    \item \textit{Mutual incoherence.} The column space of the matrix $X$ is incoherent with the column space of the identity matrix. That is, there exists some $\kappa_m > 0$ such that
    $$
    \frac{1}{\sqrt{n}} |\inner{Xh, f}| \leq \kappa_m (\norm{h}_2 + \norm{f}_2)^2 \quad \text{for all  } (h, f) \in \C .
    $$
\end{itemize}

\noindent If the incoherence between these two column spaces is sufficiently small such that $4 \kappa_m < \kappa_r$, then we can conclude that $\norm{Xh+f}_2^2 \geq (\kappa_r - 2 \kappa_m)(\norm{h}_2 + \norm{f}_2)^2$. The small mutual incoherence property is especially important since it provides how the regression separates itself away from the sparse error.

3) To simplify our result, we consider a special case of the uniform Gaussian design, in which $\Sigma = I_{p \times p}$. In this situation, $C_{\min} = C_{\max} = \xi(\Sigma) = 1$. We have the following result which is a corollary of Theorem \ref{thm::parameter estimation} and Lemma \ref{lem::extended RE with Gaussian matrix}

\begin{cor} [\textrm{Standard Gaussian design}]
\label{cor::parameter estimation}
Let $X$ be a standard Gaussian design matrix. Consider the optimal solution $(\widehat{\beta}, \widehat{e})$ to the optimization problem (\ref{opt::lasso with sparse noise}) with regularization parameters chosen as
\begin{equation}
\lambda_{n,\beta} = \frac{4}{\gamma} \sqrt{\frac{\sigma^2 \log p}{n}} \quad \text{and} \quad \lambda_{n,e} = 4\sqrt{\frac{\sigma^2 \log n}{n}},
\end{equation}
for $\gamma \in (0,1]$. Also, assuming that $n \geq C k \log p$ and $s \leq \min \{ C_1 \frac{n}{\gamma^2 \log n}, C_2 n \}$ for some small constants $C, C_1, C_2$, Then with probability greater than $1 - c_1 \exp(-c_2 n)$, the error set $(h,f) = (\widehat{\beta} - \beta^{\star}, \widehat{e}-e^{\star})$ is bounded by
\begin{equation}
\label{inq::L2 error bound}
\norm{h}_2 + \norm{f}_2 \leq 384 \left( \frac{1}{\gamma}\sqrt{\frac{\sigma^2 k \log p}{n}} + \sqrt{\frac{\sigma^2 s \log n}{n}} \right).
\end{equation}
\end{cor}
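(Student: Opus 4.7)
The plan is to specialize Lemma~\ref{lem::extended RE with Gaussian matrix} and Theorem~\ref{thm::parameter estimation} to the case $\Sigma = I_{p\times p}$ and then verify that the explicit choices of $\lambda_{n,\beta}$ and $\lambda_{n,e}$ stated in the corollary fit the hypotheses of both results, so that the two can be chained together.

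First, I would observe that with $\Sigma = I$ we have $C_{\min} = C_{\max} = \xi(\Sigma) = 1$, so the requirement $C_{\max}\,\xi(\Sigma) = \Theta(1)$ in Lemma~\ref{lem::extended RE with Gaussian matrix} is automatic, and the sample-size and corruption-sparsity conditions reduce to precisely the hypotheses $n \geq Ck\log p$ and $s \leq \min\{C_1 n/(\gamma^2\log n),\, C_2 n\}$ that appear in the corollary. Consequently, on an event of probability at least $1 - c_1\exp(-c_2 n)$, the design matrix $X$ satisfies the extended RE condition with constant $\kappa_l = 1/(4\sqrt{2})$, provided we use the ratio $\lambda = \lambda_{n,e}/\lambda_{n,\beta} = \gamma\sqrt{\log n/\log p}$ prescribed by the lemma. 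A direct check shows that this is exactly the ratio of the $\lambda_{n,e}$ and $\lambda_{n,\beta}$ chosen in the corollary.

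Next I would show that the stated regularization parameters satisfy the hypotheses of Theorem~\ref{thm::parameter estimation}. Since the columns of $X$ are i.i.d.\ $\oper N(0, I_n)$, standard Gaussian concentration gives that with high probability each column has $\ell_2$-norm $\Theta(\sqrt{n})$, so the $\sqrt{n}$-normalization assumption is met. Conditioning on $X$ and using that each coordinate of $X^* w$ is a centered Gaussian with variance at most $\sigma^2 \norm{X_j}_2^2 \leq 2\sigma^2 n$, a union bound over $p$ coordinates yields $\norm{X^*w}_\infty/n \leq 2\sqrt{\sigma^2\log p/n}$ except on an exponentially small event; similarly $\norm{w}_\infty/\sqrt{n}\leq 2\sqrt{\sigma^2\log n/n}$ by a union bound over $n$ coordinates. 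Hence the choices $\lambda_{n,\beta} = (4/\gamma)\sqrt{\sigma^2\log p/n}$ and $\lambda_{n,e} = 4\sqrt{\sigma^2\log n/n}$ dominate $(2/\gamma)\norm{X^*w}_\infty/n$ and $2\norm{w}_\infty/\sqrt{n}$ as required.

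Finally, I would intersect all the high-probability events (extended RE, the two noise concentrations, and the column-norm concentration) via a union bound, each of which holds with probability $1 - c\exp(-c'n)$ for a suitable constant, so the combined event still has probability at least $1 - c_1\exp(-c_2 n)$. On this event, Theorem~\ref{thm::parameter estimation} gives
\begin{equation*}
\norm{h}_2 + \norm{f}_2 \;\leq\; 3\kappa_l^{-2}\bigl(\lambda_{n,\beta}\sqrt{k} + \lambda_{n,e}\sqrt{s}\bigr) \;=\; 3\cdot 32\cdot\Bigl(\tfrac{4}{\gamma}\sqrt{\tfrac{\sigma^2 k\log p}{n}} + 4\sqrt{\tfrac{\sigma^2 s\log n}{n}}\Bigr),
\end{equation*}
which collapses to the constant $384$ appearing in the corollary. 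The main technical obstacle, as I see it, is not any single calculation but the bookkeeping in step two: the noise bounds on $\norm{X^*w}_\infty$ and $\norm{w}_\infty$ must be shown to hold simultaneously with the extended RE event (conditionally on $X$ for the former, and on the concentration of the column norms of $X$), so that all hypotheses of Theorem~\ref{thm::parameter estimation} are fulfilled on a single event of the claimed probability.
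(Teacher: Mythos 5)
Your proposal is correct and is exactly the route the paper takes: the corollary is obtained by specializing Lemma~\ref{lem::extended RE with Gaussian matrix} to $\Sigma = I_{p\times p}$ (so $\kappa_l = 1/(4\sqrt{2})$, $\kappa_l^{-2}=32$, and $3\cdot 32\cdot 4 = 384$), checking that the stated $\lambda_{n,\beta},\lambda_{n,e}$ have the ratio $\lambda=\gamma\sqrt{\log n/\log p}$ required by the lemma and dominate $\tfrac{2}{\gamma}\norm{X^*w}_\infty/n$ and $2\norm{w}_\infty/\sqrt{n}$ as in the remark following Theorem~\ref{thm::parameter estimation}, and then invoking that theorem. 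The only caveat, which the paper itself glosses over, is that the noise-concentration events fail with probability polynomial in $1/p$ and $1/n$ rather than $\exp(-cn)$, so the stated $1-c_1\exp(-c_2 n)$ probability should really be read as the probability of the extended RE event alone.
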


Corollary \ref{cor::parameter estimation} reveals a remarkable result: by setting $\gamma = 1/\sqrt{\log n}$, even when the fraction of corruption is linearly proportional with the number of samples $n$, the extended Lasso (\ref{opt::lasso with sparse noise}) is still capable of recovering both coefficient vector $\beta^{\star}$ and corruption (missing) vector $e^{\star}$ within a bounded error (\ref{inq::L2 error bound}). Without the dense noise $w$ in the observation model (\ref{eqt::extend linear observations}) ($\sigma = 0$), the extended Lasso actually recovers the exact solution. This is impossible to achieve with the standard Lasso. Furthermore, if we know in prior that the number of corrupted observations is on the order of $\oper O(n/\log p)$, then selecting $\gamma = 1$ instead of $1/\log n$ will minimize the estimation error (see equation (\ref{inq::L2 error bound})) of Theorem \ref{thm::parameter estimation}.


\subsection{Feature selection with random Gaussian design}

In many applications, the feature selection criterion is more preferred \cite{Wainwright_Lasso_2009_J} \cite{ZY_Lasso_2006_J}. Feature selection refers to the property that the recovered parameter has the same signed support as the true regressor. In general, good feature selection implies good parameter estimation but the reverse direction does not usually hold. In this part, we investigate conditions for the design matrix and the scaling of $(n,p,k,s)$ such that both regression and sparse error vectors satisfy these criterion.

Consider the linear model (\ref{eqt::extend linear observations}) where $X$ is the Gaussian random design matrix whose rows are i.i.d zero mean with covariance matrix $\Sigma$. It has been well known in the Lasso that in order to obtain feature selection accuracy, the covariance matrix $\Sigma$ must obey two properties: invertibility and small mutual incoherence restricted on the set $T$. The first property guarantees that (\ref{opt::lasso with sparse noise}) is strictly convex, leading to the unique solution of the convex program, while the second property requires the separation between two components of $\Sigma$, one related to the set $T$ and the other to the set $T^c$ must be sufficiently small.
\begin{enumerate}
	\item \textbf{Invertibility.} To guarantee uniqueness, we require $\Sigma_{TT}$ to be invertible. Particularly, let $C_{\min} = \lambda_{\min} (\Sigma_{TT})$, we require $C_{\min} > 0$.
	
	\item \textbf{Mutual incoherence.} For some $\gamma \in (0,1)$,
		\begin{equation}
		\label{inq::incoherence assumption}
		\norm{\Sigma^*_{T^c T} (\Sigma_{TT})^{-1} }_{\infty} \leq (1 - \gamma).
		\end{equation}
    It is worth noting that these two invertibility and mutual incoherence properties are exactly the same as the conditions used to establish the exact signed support recovery in the standard Lasso (e.g \cite{Tropp_Relax_2006_J}, \cite{Wainwright_Lasso_2009_J}, \cite{ZY_Lasso_2006_J}).
\end{enumerate}

Toward the end, we will also elaborate on three other quantities operating on the restricted covariance matrix $\Sigma_{TT}$: $C_{\max}$, which is defined as the maximum eigenvalue of $\Sigma_{TT}$: $C_{\max} := \lambda_{\max} (\Sigma_{TT})$; and $D^-_{\max}$ and $D^+_{\max}$, which are denoted as $\ell_{\infty}$-norm of matrices $\Sigma_{TT}^{-1}$ and $\Sigma_{TT}$ : $D^-_{\max} := \norm{ (\Sigma_{TT})^{-1} }_{\infty}$ and $D^+_{\max} := \norm{ \Sigma_{TT} }_{\infty}$.

Our result also involves two other quantities operating on the conditional covariance matrix of $(X_{T^c} | X_T)$ defined as
\begin{equation}
\label{eqt::Sigma_Tc|T}
\Sigma_{T^c|T} := \Sigma_{T^c T^c} - \Sigma_{T^c T} \Sigma_{TT}^{-1} \Sigma_{T T^c}.
\end{equation}
They are defined as $\rho_u (\Sigma_{T^c|T}) = \max_i (\Sigma_{T^c|T})_{ii}$ and $\rho_l (\Sigma_{T^c|T}) = \frac{1}{2} \min_{i \neq j} [(\Sigma_{T^c|T})_{ii} + (\Sigma_{T^c|T})_{jj} - 2 (\Sigma_{T^c|T})_{ij}]$, which we often denote with the shorthand notation $\rho_u$ and $\rho_l$.

We establish the following result for Gaussian random design whose covariance matrix $\Sigma$ obeys the two assumptions.

\begin{thm} [\textrm{Achievability}]
\label{thm::Achievability - Gaussian design}  Given the linear model (\ref{eqt::extend linear observations}) with random Gaussian design and the covariance matrix $\Sigma$ satisfying invertibility and incoherence properties for any $\gamma \in (0,1)$, suppose that we solve the extended Lasso (\ref{opt::lasso with sparse noise}) with regularization parameters obeying
\begin{equation}
\label{inq::equation of lambda1}
\lambda_{n,\beta} = \frac{8}{\gamma} \sqrt{\frac{\sigma^2 \eta \log n \log p}{n} \max\{\rho_u, D^+_{\max} \} }
\end{equation}
\begin{equation}
\label{inq::equation of lambda2}
\text{and}\quad\quad \lambda_{n,e} = 4 \sqrt{\frac{\sigma^2 \log n}{n}},
\end{equation}


\noindent for some $\eta \in [\frac{1}{\log n}, 1)$. Assume that the sequence (n, p, k, s) and regularization parameters $\lambda_{n,\beta}$, $\lambda_{n,e}$ satisfying $s \leq \eta n $ and $n > \max \{ n_1, n_2 \}$ where $n_1$ and $n_2$ are defined as
\begin{equation*}
n_1 := \frac{4(1+\epsilon)}{(1-\eta)}\frac{\rho_u}{C_{\min} \gamma^2 } k \log(p-k)\left\{ \frac{9}{4} + (1-\eta)^2 \frac{\sigma^2 C_{\min} }{ \lambda_{n,\beta}^2 k }\right\}
\end{equation*}
\begin{multline*}
\text{and} \quad n_2 :=  48 (1+\epsilon) \frac{ \eta }{(1-\eta)^2} \frac{\max \{\rho_u, D^+_{\max} \}}{C_{\min} \gamma^2}  \\
\times \left( 1 - \frac{2\sigma \sqrt{\log n}}{\lambda_{n,e} \sqrt{n}} \right)^{-2} k \log(p-k) \log n
\end{multline*}
for $\epsilon \in (0,1)$. In addition, suppose that $\min_{i\in T} |\beta^{\star}_i| > f_{\beta} (\lambda_{n,\beta})$ and $\min_{i\in S} |e^{\star}_i| > f_{e} (\lambda_{n,\beta},\lambda_{n,e})$ where
\begin{equation}
\label{eqt::f-beta}
f_{\beta} := c_1  \lambda'_{n,\beta} + 20 \sqrt{\frac{\sigma^2 \log k}{C_{\min} (n-s)}} \quad\text{and  }
\end{equation}
\begin{equation}
\label{eqt::f-e}
f_{e} := c_2 \lambda'_{n,\beta} \sqrt{C_{\max}} \sqrt{\frac{sk + k\sqrt{sk}}{n}} + c_3\lambda_{n,e}
\end{equation}
\begin{equation*}
\text{with  }  \lambda'_{n,\beta} := \lambda_{n,\beta} \sqrt{\frac{k \log (p-k)}{(1-\eta)^2 n}} \norm{\Sigma^{-1/2}_{TT}}^2_{\infty}.
\end{equation*}

\noindent Then, the following properties holds with probability greater than $1 - c \exp(-c' \max\{\log n,\log (p-k)\})$:
\begin{enumerate}
	\item The solution pair ($\widehat{\beta}, \widehat{e}$) of the extended Lasso (\ref{opt::lasso with sparse noise}) is unique and has the exact signed support.
	\item $\ell_{\infty}$-norm bounds: $\norm{\widehat{\beta} - \beta^{\star}}_{\infty} \leq f_{\beta} (\lambda_{n,\beta})$ and $\norm{\widehat{e} - e^{\star}}_{\infty} \leq f_e (\lambda_{n,\beta}, \lambda_{n,e})$.
\end{enumerate}
\end{thm}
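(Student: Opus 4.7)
The approach I would take is Wainwright's primal--dual witness (PDW) strategy, adapted to the joint $(\beta,e)$ penalty. Construct a candidate $(\widehat{\beta},\widehat{e})$ by solving the restricted (oracle) program in which $\beta_{T^c}=0$ and $e_{S^c}=0$, and set the on-support subgradients to $\widehat{z}_T=\sgn(\beta^{\star}_T)$, $\widehat{v}_S=\sgn(e^{\star}_S)$. Then solve the two off-support stationarity equations for $\widehat{z}_{T^c}$ and $\widehat{v}_{S^c}$ and verify strict dual feasibility $\norm{\widehat{z}_{T^c}}_\infty<1$, $\norm{\widehat{v}_{S^c}}_\infty<1$. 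Together with invertibility of the restricted Hessian, this certifies that the PDW pair is the unique optimum of (\ref{opt::lasso with sparse noise}) with the correct signed support.

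Writing $h=\widehat{\beta}-\beta^{\star}$ and $f=\widehat{e}-e^{\star}$, the on-support KKT conditions give
\begin{equation*}
f_S = \tfrac{1}{\sqrt{n}}(w_S - X_{S,T}h_T) - \lambda_{n,e}\sgn(e^{\star}_S),
\end{equation*}
and substituting back produces the closed form
\begin{equation*}
h_T = \Big(\tfrac{1}{n}X_{S^c,T}^{T} X_{S^c,T}\Big)^{-1}\Big[\tfrac{1}{n}X_{S^c,T}^{T} w_{S^c} + \tfrac{\lambda_{n,e}}{\sqrt{n}}X_{S,T}^{T}\sgn(e^{\star}_S) - \lambda_{n,\beta}\sgn(\beta^{\star}_T)\Big].
\end{equation*}
A Wishart-type bound shows $\tfrac{1}{n}X_{S^c,T}^{T}X_{S^c,T}\succeq \tfrac{(1-\eta)C_{\min}}{2}I_k$ with probability $1-e^{-cn}$ once $n-s\gtrsim k$, which both justifies the inverse and proves that the restricted Hessian (whose Schur complement is precisely this matrix) is positive definite, yielding uniqueness of the oracle problem.

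The central step is the off-support analysis for $\widehat{z}_{T^c}$. Using the conditional Gaussian decomposition $X_{T^c}=X_T\Sigma_{TT}^{-1}\Sigma_{TT^c}+E$, where the rows of $E$ are i.i.d.\ $\mathcal{N}(0,\Sigma_{T^c\mid T})$ and independent of $X_T$, and combining with the on-support KKT relation for $\beta$, one obtains
\begin{equation*}
\widehat{z}_{T^c} = \Sigma_{T^cT}\Sigma_{TT}^{-1}\sgn(\beta^{\star}_T) + \tfrac{1}{n\lambda_{n,\beta}}\Big[E^{T}(w - X_T h_T) - \sqrt{n}\,E_{S,:}^{T} f_S\Big].
\end{equation*}
The mutual-incoherence hypothesis bounds the first term by $1-\gamma$ in $\ell_\infty$-norm. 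Conditioning on $(X_T,w)$, the bracketed quantity is a fresh Gaussian vector whose covariance depends on $(h_T,f_S)$ and $\Sigma_{T^c\mid T}$, so after a union bound over $T^c$ and standard Gaussian tails it is bounded by $\gamma/2$ whenever $n>n_1$; the $\rho_u$ and the factor $9/4$ in the expression for $n_1$ arise precisely from this step. The analogous analysis for $\widehat{v}_{S^c}=\tfrac{1}{\sqrt{n}\lambda_{n,e}}(w_{S^c}-X_{S^c,T}h_T)$ is simpler because $w_{S^c}$ is $\mathcal{N}(0,\sigma^2 I)$ and $X_{S^c,T}h_T$ has conditional variance at most $C_{\max}\norm{h_T}_2^2$; the union bound over $S^c$ then produces the threshold $n>n_2$.

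Finally, sign preservation reduces to the $\ell_\infty$ bounds $\norm{h_T}_\infty<f_\beta$ and $\norm{f_S}_\infty<f_e$, which I would read off directly from the closed forms above. The regularization-bias piece $\lambda_{n,\beta}(\tfrac{1}{n}X_{S^c,T}^{T}X_{S^c,T})^{-1}\sgn(\beta^{\star}_T)$ yields the $c_1\lambda'_{n,\beta}$ contribution through the estimate $\norm{\Sigma_{TT}^{-1/2}}_\infty^2$; the stochastic piece $\tfrac{1}{n}X_{S^c,T}^{T}w_{S^c}$ gives the $\sqrt{\sigma^2\log k/[C_{\min}(n-s)]}$ term via a Gaussian maximal inequality; and the cross-coupling $\tfrac{\lambda_{n,e}}{\sqrt{n}}X_{S,T}^{T}\sgn(e^{\star}_S)$, controlled by a sub-Gaussian tail bound on the $s\times k$ block $X_{S,T}$, propagates through the substitution for $f_S$ to produce the $\sqrt{sk+k\sqrt{sk}}$ factor in $f_e$. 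The main obstacle will be cleanly decoupling the randomness so that the Gaussian concentration for $E$ remains independent of the already data-dependent pair $(h_T,f_S)$, and propagating the various operator- and $\ell_\infty$-norm bounds on $\Sigma_{TT}$, $\Sigma_{TT}^{-1}$, and the conditional covariance $\Sigma_{T^c\mid T}$ through these formulas without losing the sharpness needed for the critical $n_1, n_2$ thresholds.
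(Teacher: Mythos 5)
Your overall strategy is exactly the paper's: a primal--dual witness construction with $\widehat{\beta}_{T^c}=0$, $\widehat{e}_{S^c}=0$, on-support subgradients fixed to $\sgn(\beta^{\star}_T)$ and $\sgn(e^{\star}_S)$, the same closed forms for $h_T$ and $f_S$, the conditional decomposition $X_{T^c}^* = \Sigma_{T^cT}\Sigma_{TT}^{-1}X_T^* + E_{T^c}^*$ with mutual incoherence absorbing the deterministic part and Gaussian tails on $E$ giving the $n_1$ threshold, and the same three-way split of $\norm{h_T}_{\infty}$ and $\norm{f_S}_{\infty}$ into bias, noise, and cross-coupling terms. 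All of that matches the paper's Sections V-A, V-C, V-D.

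The genuine gap is in your treatment of $\widehat{v}_{S^c}$. You propose to bound $\frac{1}{\sqrt{n}\lambda_{n,e}}(w_{S^c}-X_{S^c,T}h_T)$ by arguing that $X_{S^c,T}h_T$ ``has conditional variance at most $C_{\max}\norm{h_T}_2^2$.'' This conditioning is circular: $h_T$ is built from $(X_{S^c,T}^*X_{S^c,T})^{-1}$ and $X_{S^c,T}^*w_{S^c}$, so the rows of $X_{S^c,T}$ are not independent of $h_T$ and the product $X_{S^c,T}h_T$ is not conditionally Gaussian with that variance. (The same remark applies to the noise part: after substituting $h_T$, the $w_{S^c}$ contribution collapses to $\Pi_{S^cT}w_{S^c}$ with $\Pi_{S^cT}$ the projection onto the orthogonal complement of the column space of $X_{S^cT}$, which is what makes it small; it does not stay a raw $\oper N(0,\sigma^2 I)$ vector.) The paper resolves the dependence by isolating the term $\frac{\sqrt{n}}{\lambda}X_{S^cT}(X_{S^cT}^*X_{S^cT})^{-1}z$ with $z$ independent of $X_{S^cT}$, writing $W_{S^cT}=UDV^*$, and exploiting that for a Gaussian matrix the left singular factor $U$ is Haar-distributed and independent of $(D,V)$; Lipschitz concentration on the Haar measure (Lemma 10 of the appendix) then gives the $\ell_\infty$ bound whose failure probability, after the union bound over $S^c$, is precisely what forces $n>n_2$. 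Without some such decoupling device (Haar concentration, a leave-one-out argument, or sample splitting), the step where $n_2$ enters does not go through, so you should treat this as the missing ingredient rather than a routine union bound. You correctly flag decoupling as ``the main obstacle,'' but you locate it at the $E$-term (where independence actually does hold by construction, since $E_{T^c}$ is independent of everything built from $X_T$, $X_{S^cT}$, and $w$) rather than at the $S^c$ dual variable, where the real difficulty lies.
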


There are several interesting observations from the theorem.

1) The first important observation is that the extended Lasso is robust to arbitrarily large and sparse error observation. Under the same invertibility and mutual incoherence assumptions on the covariance matrix $\Sigma$ as the standard Lasso, the extended Lasso program can recover both the regression vector and error with exact signed supports even when almost all the observations are contaminated by arbitrarily large error with unknown support. What we sacrifice for the corruption robustness is an additional $\log$ factor to the number of samples. We notice that when the error fraction is $\oper O(n / \log n)$, only $\Omega (k \log (p-k))$ samples are sufficient to recover the exact signed supports of both the regression and sparse error vectors.

2) We consider the special case with Gaussian random design in which the covariance matrix $\Sigma = I_{p \times p}$. In this case, entries of $X$ is i.i.d. $\oper N(0,1)$ and we have quantities $C_{\min} = C_{\max} = D^+_{\max} = D^-_{\max} = \rho_u = \rho_l = 1$. In addition, the invertibility and mutual incoherence properties are automatically satisfied with $\gamma = 1$. The theorem implies that when the number of errors $s$ is arbitrarily close to $n$, the number of samples $n$ needed to recover the exact signed supports obeys $\frac{n}{\log n} = \Omega (k \log(p-k))$. Furthermore, Theorem \ref{thm::Achievability - Gaussian design} guarantees consistency in element-wise $\ell_{\infty}$-norm of the estimated regression at the rate of
$$
\norm{\widehat{\beta} - \beta^{\star} }_{\infty} = \oper O \left( \sqrt{\frac{\sigma^2 \log p}{n}} \sqrt{\frac{\eta k \log n \log (p-k)}{n}} \right).
$$

As $\eta$ is chosen to be $1/\sqrt{\log n}$ (equivalent to establish $s$ close to $n/\log n$), the $\ell_{\infty}$ error rate is on the order of $\oper O (\sigma \sqrt {\frac{\log p}{n}})$, which is known to be the same as that of the standard Lasso. On the other hand, if we select $\eta$ is arbitrarily close to unity $-$ equivalently, $s$ is close to $n$, the $\ell_{\infty}$ error rate is on the order of $\oper O (\sigma \sqrt {\frac{\log n \log p}{n}})$. This is naturally interpreted as the more fraction of corruption is on the observations, the higher reconstruction error we expect to get. What interesting is that we draw an explicit connection between the fraction of corruption and the reconstruction error obtained by the extended Lasso optimization.



3) Corollary \ref{cor::parameter estimation}, though interesting, is not able to guarantee stable recovery when the fraction of corruption converges to unity. We show in Theorem \ref{thm::Achievability - Gaussian design} that this fraction can come arbitrarily close to unity by sacrificing a factor of $\log n$ for the number of samples. Theorem \ref{thm::Achievability - Gaussian design} also implies that there is a significant difference between recovery to obtain small parameter estimation error versus recovery to obtain correct variable selection. When the amount of corrupted observations is linearly proportional to $n$, recovering the exact signed supports require an increase from $\Omega(k \log p)$ (in Corollary \ref{cor::parameter estimation}) to $\Omega (k \log p \log n)$ samples (in Theorem \ref{thm::Achievability - Gaussian design}). This behavior is captured similarly by the standard Lasso, as pointed out in the discussion after Corollary 2 of \cite{Wainwright_Lasso_2009_J}.


Our next theorem show that the number of samples needed to recover accurately the signed support is actually optimal. That is, whenever the rescaled sample size satisfies a certain threshold, regardless of what the regularization parameters $\lambda_{n,\beta}$ and $\lambda_{n,e}$ are selected, no solution of the extended Lasso can correctly identify the signed supports with high probability.

\begin{thm} [\textrm{Inachievability}]
\label{thm::Inachievability - Gaussian design} Given the linear model (\ref{eqt::extend linear observations}) with random Gaussian design and the covariance matrix $\Sigma$ satisfying invertibility and incoherence properties for any $\gamma \in (0,1)$. Let $\eta, \delta \in (0,1)$ and the sequence $(n,p,k,s)$ satisfies $s \geq \eta n$ and $n < \max\{n_1, n_2\}$ where $n_1$ and $n_2$ are defined as
\begin{equation*}
n_1 := \frac{2 (1-\delta)}{(1-\eta)} \frac{\rho_l k \log(p-k)}{C_{\max}(2-\gamma)^2}   \left\{\frac{3}{8} + (1-\eta)^2 \frac{\sigma^2 C_{\max}}{\lambda_{n,\beta}^2 k}  \right\}.
\end{equation*}
\begin{equation*}
\begin{split}
n_2 := &\frac{(1-\delta)}{12} \frac{\eta}{(1-\eta)^2} \frac{\rho_l}{C_{\max}} \\
&\times \left( 1 + \frac{2\sqrt{\sigma^2 \log n}}{\lambda_{n,e} \sqrt{n}} \right)^{-2} k \log(n-s) \log(p-k).
\end{split}
\end{equation*}
%
Then, with probability tending to unity, no solution pair of the extended Lasso (\ref{opt::extended L1}) has the correct signed support.
\end{thm}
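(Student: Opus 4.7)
The strategy is a primal--dual witness (PDW) converse argument on the lifted design $Z=[X,\sqrt n\,I]$, following the template of Wainwright's converse for standard Lasso. Suppose for contradiction that some solution $(\hat\beta,\hat e)$ of (\ref{opt::lasso with sparse noise}) has correct signed supports $(T,S)$. Writing $r:=y-X\hat\beta-\sqrt n\hat e$, the KKT conditions force the on-support identities $\tfrac{1}{n}X_T^{*}r=\lambda_{n,\beta}\sgn(\beta^\star_T)$ and $\tfrac{1}{\sqrt n}r_S=\lambda_{n,e}\sgn(e^\star_S)$, together with the strict off-support inequalities $\|\tfrac{1}{n}X_{T^c}^{*}r\|_\infty<\lambda_{n,\beta}$ and $\|\tfrac{1}{\sqrt n}r_{S^c}\|_\infty<\lambda_{n,e}$. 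Using $r=w-X_T(\hat\beta_T-\beta^\star_T)-\sqrt n(\hat e-e^\star)$ with $(\hat e-e^\star)_{S^c}=0$ and invertibility of $X_{S^c,T}^{*}X_{S^c,T}$ (a w.h.p.\ Wishart fact once $n-s\gtrsim k$), one obtains an explicit closed form for $\hat\beta_T-\beta^\star_T$ and hence for $r$ in terms of $X_T,X_{S^c,T},X_{S,T},w$ and the two sign vectors.

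The core step is to show that the would-be dual $V:=\tfrac{1}{n}X_{T^c}^{*}r$ has $\ell_\infty$ norm exceeding $\lambda_{n,\beta}$ w.h.p., contradicting strict feasibility. For this I invoke the Gaussian conditional identity $X_{T^c}=X_T\Sigma_{TT}^{-1}\Sigma_{T,T^c}+E$, with $E$ having i.i.d.\ rows $\oper N(0,\Sigma_{T^c|T})$ independent of $(X_T,w,e^\star)$. Substituting and using the on-support identity $X_T^{*}r=n\lambda_{n,\beta}\sgn(\beta^\star_T)$ yields $V=\lambda_{n,\beta}\,\Sigma_{T^c,T}\Sigma_{TT}^{-1}\sgn(\beta^\star_T)+\tfrac{1}{n}E^{*}r$. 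By mutual incoherence the deterministic part has $\ell_\infty\le(1-\gamma)\lambda_{n,\beta}$; conditionally on $(X_T,w,e^\star)$ the random part is Gaussian with covariance $\tfrac{\|r\|_2^{2}}{n^{2}}\Sigma_{T^c|T}$ and canonical pseudo-metric bounded below by $\sqrt{2\rho_l}\,\|r\|_2/n$. A Sudakov-type lower bound on this correlated Gaussian supremum then gives $\|\tfrac{1}{n}E^{*}r\|_\infty\gtrsim \tfrac{\|r\|_2}{n}\sqrt{\rho_l\log(p-k)}$ w.h.p., so that (choosing a coordinate where the fluctuation reinforces, rather than cancels, the mean) $\|V\|_\infty>\lambda_{n,\beta}$ whenever
\[
\|r\|_2^{2}\;>\;C_\ast\,\frac{(2-\gamma)^{2}\,n^{2}\lambda_{n,\beta}^{2}}{\rho_l\,\log(p-k)}.
\]

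It remains to lower-bound $\|r\|_2^{2}=\|r_S\|_2^{2}+\|r_{S^c}\|_2^{2}$. Exactly, $\|r_S\|_2^{2}=ns\lambda_{n,e}^{2}$. On $S^c$ the closed form gives $r_{S^c}=(I-P)w_{S^c}+X_{S^c,T}(X_{S^c,T}^{*}X_{S^c,T})^{-1}\bigl[n\lambda_{n,\beta}\sgn(\beta^\star_T)-\sqrt n\lambda_{n,e}X_{S,T}^{*}\sgn(e^\star_S)\bigr]$, with $P$ the orthogonal projector onto $\mathrm{col}(X_{S^c,T})$. Wishart concentration $X_{S^c,T}^{*}X_{S^c,T}\asymp(n-s)\Sigma_{TT}$, Gaussian tails for $\|w_{S^c}\|_2$, and the fact that $X_{S,T}^{*}\sgn(e^\star_S)$ is Gaussian with covariance $s\Sigma_{TT}$ combine to give a w.h.p.\ bound
\[
\|r\|_2^{2}\;\gtrsim\;(1-\delta)\Bigl[\sigma^{2}(n-s)+\tfrac{n^{2}\lambda_{n,\beta}^{2}k}{(n-s)C_{\max}}+\tfrac{nsk\lambda_{n,e}^{2}}{n-s}+ns\lambda_{n,e}^{2}\Bigr].
\]
Plugging each dominant summand back into the violation condition and substituting $s=\eta n$ produces the two thresholds: the noise and signal summands yield the Wainwright-type threshold $n_1$ (with the characteristic $\tfrac{3}{8}+(1-\eta)^{2}\sigma^{2}C_{\max}/(\lambda_{n,\beta}^{2}k)$ factor), while the corruption summand $ns\lambda_{n,e}^{2}$, combined with the good event $\|w_{S^c}\|_\infty\le 2\sqrt{\sigma^{2}\log n}$ needed to sustain the on-support equalities, produces $n_2$ with its $\eta/(1-\eta)^{2}$ and $(1+2\sqrt{\sigma^{2}\log n}/(\lambda_{n,e}\sqrt n))^{-2}$ factors and the extra $\log(n-s)$ picked up from a Gaussian-max concentration applied to the rows of $X_{S^c,T}$.

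The main technical obstacle is the Sudakov step. Unlike the easier Gaussian-maximum \emph{upper} bound used on the achievability side, the converse requires a sharp \emph{lower} bound on the correlated Gaussian supremum over $T^c$ with the correct constant $\rho_l$ (not $\rho_u$) and the right probability guarantees; this must be established either by a chaining lower bound or, equivalently, by anti-concentration for a carefully chosen coordinate combined with a union bound over a maximal $\sqrt{\rho_l}$-separated subset of $T^c$ of cardinality $\Omega(p-k)$. A secondary difficulty is the cross-dependence between $\|r\|_2$ and $E$: $r$ depends on $X_T,X_{S^c,T},X_{S,T}$ and $w$, so the Sudakov and residual bounds are not a priori independent. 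I resolve this by first conditioning on good events that pin down all relevant matrix concentrations and Gaussian tails, and then applying the Sudakov bound solely to $E$, which remains independent of that conditioning.
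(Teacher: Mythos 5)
Your overall framework is right: the paper also runs a primal--dual witness converse, writes $z^{(\beta)}_{T^c}=a+b$ with $\norm{a}_{\infty}\leq 1-\gamma$ from incoherence, lower-bounds the Gaussian maximum of $b$ over the $p-k$ coordinates of $T^c$ with the constant $\rho_l$, and lower-bounds the variance scaling $M=\norm{r}_2^2/(n^2\lambda_{n,\beta}^2)$ by exactly the three summands you list. That part of your argument reproduces the threshold $n_1$ correctly.

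The gap is in $n_2$. You try to extract it from the same dual vector $V=\tfrac{1}{n}X_{T^c}^{*}r$ by feeding the corruption summand $\norm{r_S}_2^2=ns\lambda_{n,e}^2$ into the violation condition $\rho_l M\log(p-k)\gtrsim(2-\gamma)^2$. But that summand contributes $\lambda^2 s/n$ to $M$ (with $\lambda=\lambda_{n,e}/\lambda_{n,\beta}$), so the resulting condition involves only $\lambda$, $s$, $n$ and $\log(p-k)$: it is a constraint on the ratio of regularization parameters (the paper turns it into the upper bound $\lambda<\sqrt{2n/(\rho_l s\log(p-k))}$, inequality (\ref{inq::upper bound of lambda})), not a sample-size threshold, and it cannot produce the factor $k\log(n-s)\log(p-k)$ appearing in $n_2$. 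In particular the $\log(n-s)$ cannot arise from a maximum over $T^c$, which has $p-k$ elements; your aside about ``a Gaussian-max concentration applied to the rows of $X_{S^c,T}$'' is pointing at the right object but it does not live inside the $\norm{V}_{\infty}>\lambda_{n,\beta}$ condition you set up. What is missing is the second dual feasibility check: the paper separately shows that $z^{(e)}_{S^c}=\tfrac{1}{\sqrt{n}\lambda_{n,e}}\Pi_{S^cT}w_{S^c}+\tfrac{\sqrt{n}}{\lambda}X_{S^cT}(X_{S^cT}^{*}X_{S^cT})^{-1}z$ exceeds one in $\ell_{\infty}$-norm, by a Sudakov--Fernique lower bound over the $n-s$ coordinates of $S^c$ (Lemmas \ref{lem::inachievability - main support lemma} and \ref{lem::lower l-infty bound of Xsct z}, via Haar-measure concentration), giving $\norm{X_{S^cT}(X_{S^cT}^{*}X_{S^cT})^{-1}z}_{\infty}\gtrsim\sqrt{k\log(n-s)}\,\norm{z}_2/\bigl((n-s)\sqrt{C_{\max}}\bigr)$; combining this with the upper bound on $\lambda$ inherited from the first part and $\norm{z}_2\geq\sqrt{k}/2$ is what yields $n_2$ with its $\eta/(1-\eta)^2$ and $\bigl(1+2\sqrt{\sigma^2\log n}/(\lambda_{n,e}\sqrt{n})\bigr)^{-2}$ factors. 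Without that second check your proof establishes only the $n_1$ half of the theorem.
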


When the covariance matrix of the design matrix $X$ is $\Sigma = I_{p \times p}$, or equivalently, entries of $X$ are i.i.d. Gaussian $\oper N(0,1)$. In addition, assume the regularization parameters $\lambda_{\beta},n$ and $\lambda_{n,e}$ are chosen from the families of (\ref{inq::equation of lambda1}) and (\ref{inq::equation of lambda2}), respectively. That is, $\lambda_{\beta,n} = 8\sqrt{\frac{\sigma^2 \eta \log n \log p}{n}}$ and $\lambda_{e,n} = 4 \sqrt{\frac{\sigma^2 \log n}{n}}$. Then, the theorem implies that the extended Lasso (\ref{opt::extended L1}) is not able to achieve the correct signed support solution whenever the number of observations is less than
\begin{align*}
n &\leq \max \left\{ c_1 \frac{1}{1-\eta} k \log(p-k), \right. \\
&c_2 \left. \frac{ \eta}{(1-\eta)^2} k \log(p-k) \log(1-\eta)n  \right\}.
\end{align*}

\section{Illustrative simulations}
\label{sec::simulations}

In this section, we provide several simulations to illustrate the capability of the extended Lasso in recovering the exact regression signed support when a significant fraction of observations is corrupted by large error. Simulations are performed for a range of parameters $(n, p, k,s)$ where the design matrix $X$ is uniform Gaussian random whose rows are i.i.d. $\oper N(0, I_{p \times p})$. For each fixed set of $(n,p,k,s)$, we generate sparse vectors $\beta^{\star}$ and $e^{\star}$ where locations of nonzero entries are distributed uniformly at random and their magnitudes are also Gaussian distributed.

In our experiments, we consider varying problem sizes $p = \{128, 256, 512 \}$ and three types of regression sparsity indices: sublinear sparsity ($k = 0.2 p /\log(0.2 p)$), linear sparsity ($k = 0.1 p$) and fractional power sparsity ($k = 0.5 p^{0.75}$). In all cases, we fixed the error support size $s = n/2$. This means half of the observations is corrupted. By this selection, Theorem \ref{thm::Achievability - Gaussian design} suggests that we require the number of samples to be $n \geq 2 C k \log (p-k) \log n$ to guarantee exact signed support recovery. We choose $\frac{n}{\log n} = 4 \theta k \log(p-k)$ where parameter $\theta$ is the rescaled sample size. This parameter control the success/failure of the extended Lasso.

In the algorithm, we select $\lambda_{n,\beta} = 2\sqrt{\frac{\sigma^2 \log p \log n}{n}}$ and $\lambda_{n,e} = 2\sqrt{\frac{\sigma^2 \log n}{n}}$ as suggested by Theorem \ref{thm::Achievability - Gaussian design}, where the noise level $\sigma =0.1$ is fixed. The algorithm reports a success if the solution pair has the same signed support as $(\beta^{\star}, e^{\star})$. In Fig. \ref{fig::probability}, each point on the curve represents the average of $100$ trials.

As demonstrated by the simulation results, our extended Lasso is capable of recovering the exact signed support of both $\beta^{\star}$ and $e^{\star}$ even $50\%$ of the observations are contaminated. Furthermore, up to unknown constants, our Theorem \ref{thm::Achievability - Gaussian design} and \ref{thm::Inachievability - Gaussian design} match with simulation results. As the sample size $\frac{n}{\log n} \leq 2 k \log(p-k)$, the probability of success starts diving down to zero, implying the failure of the extended Lasso.

\begin{figure}[h]
\centering
\subfigure{
\includegraphics[width=2.7in]{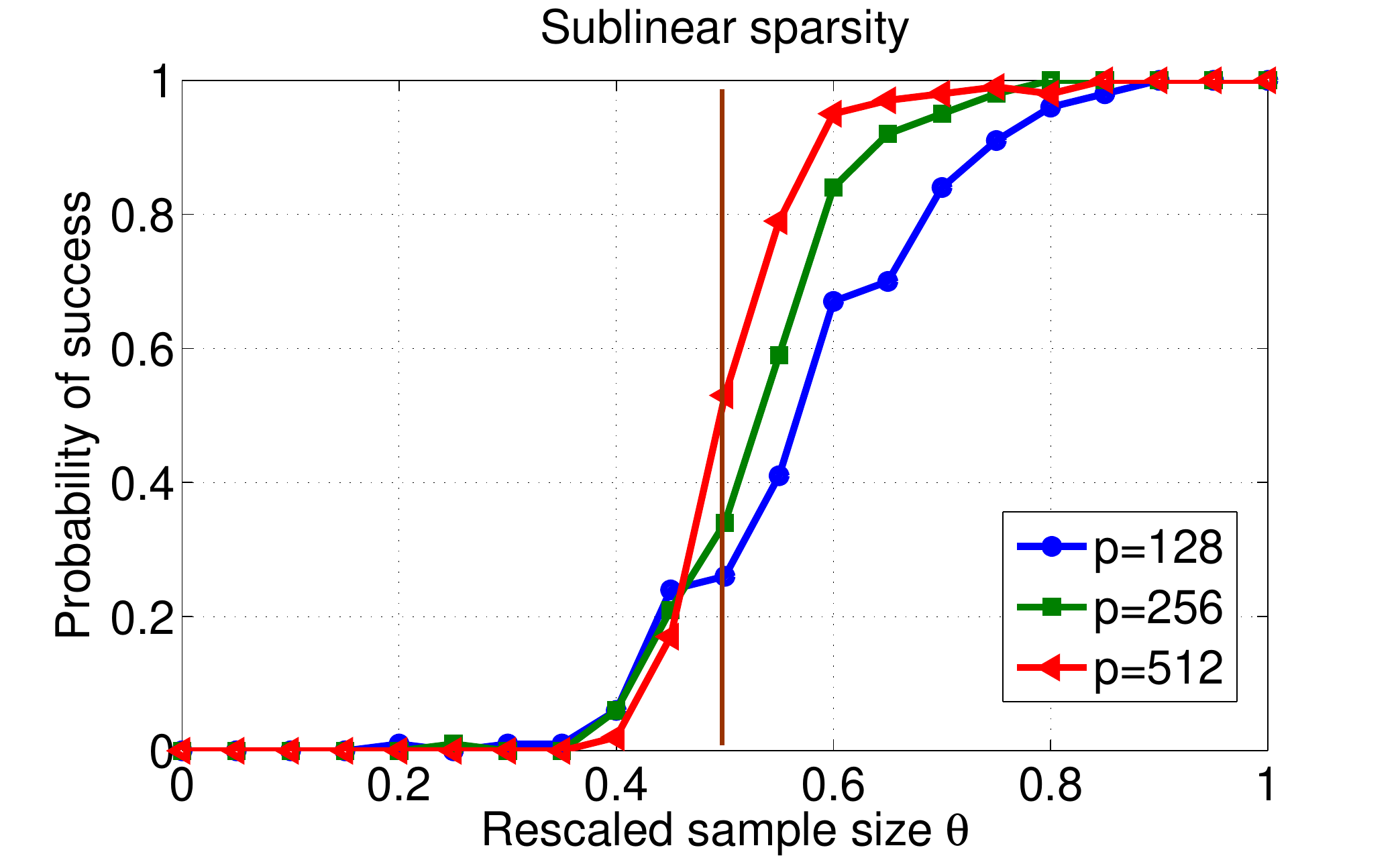} }
\subfigure{
\includegraphics[width=2.7in]{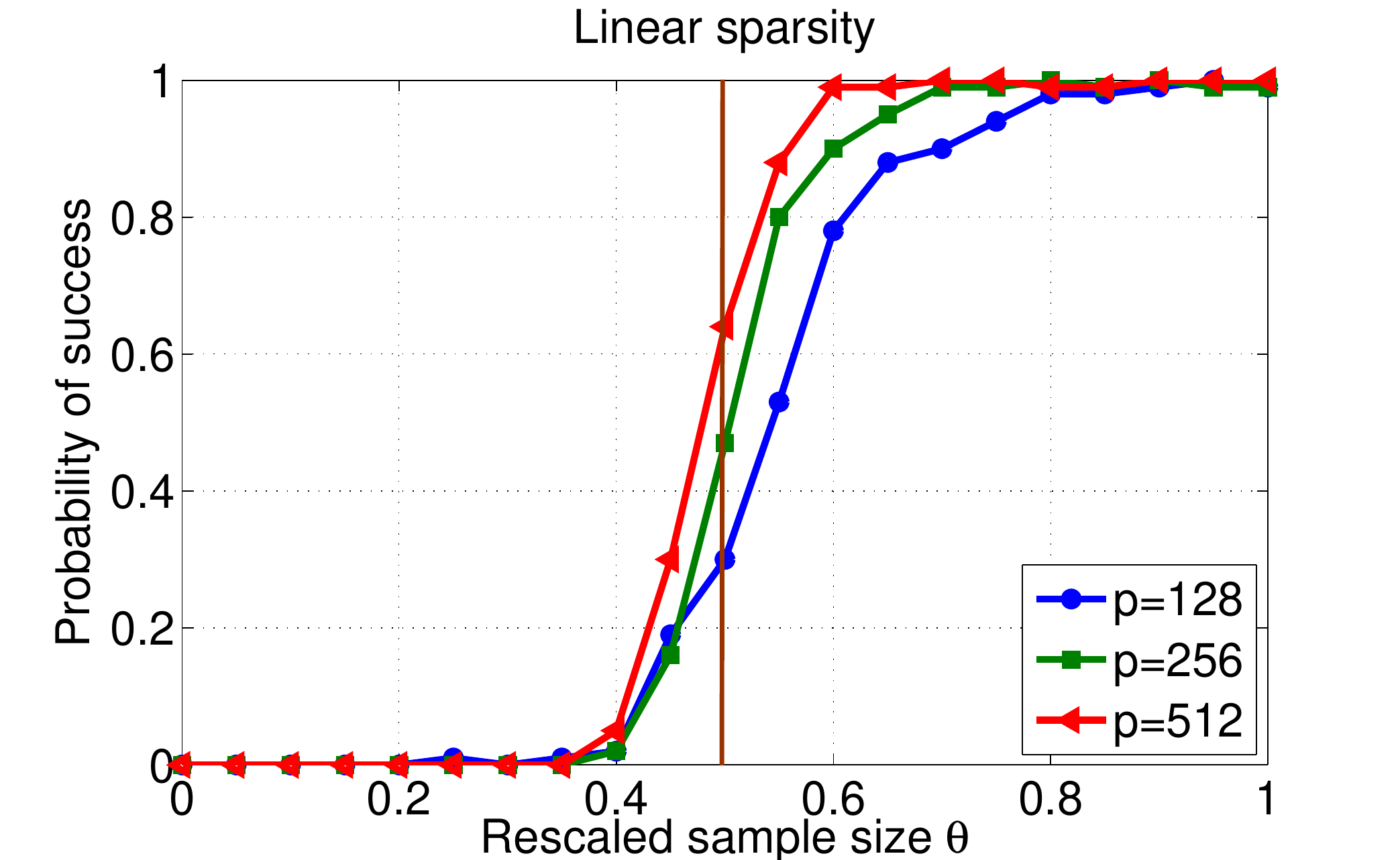} }
\subfigure{
\includegraphics[width=2.7in]{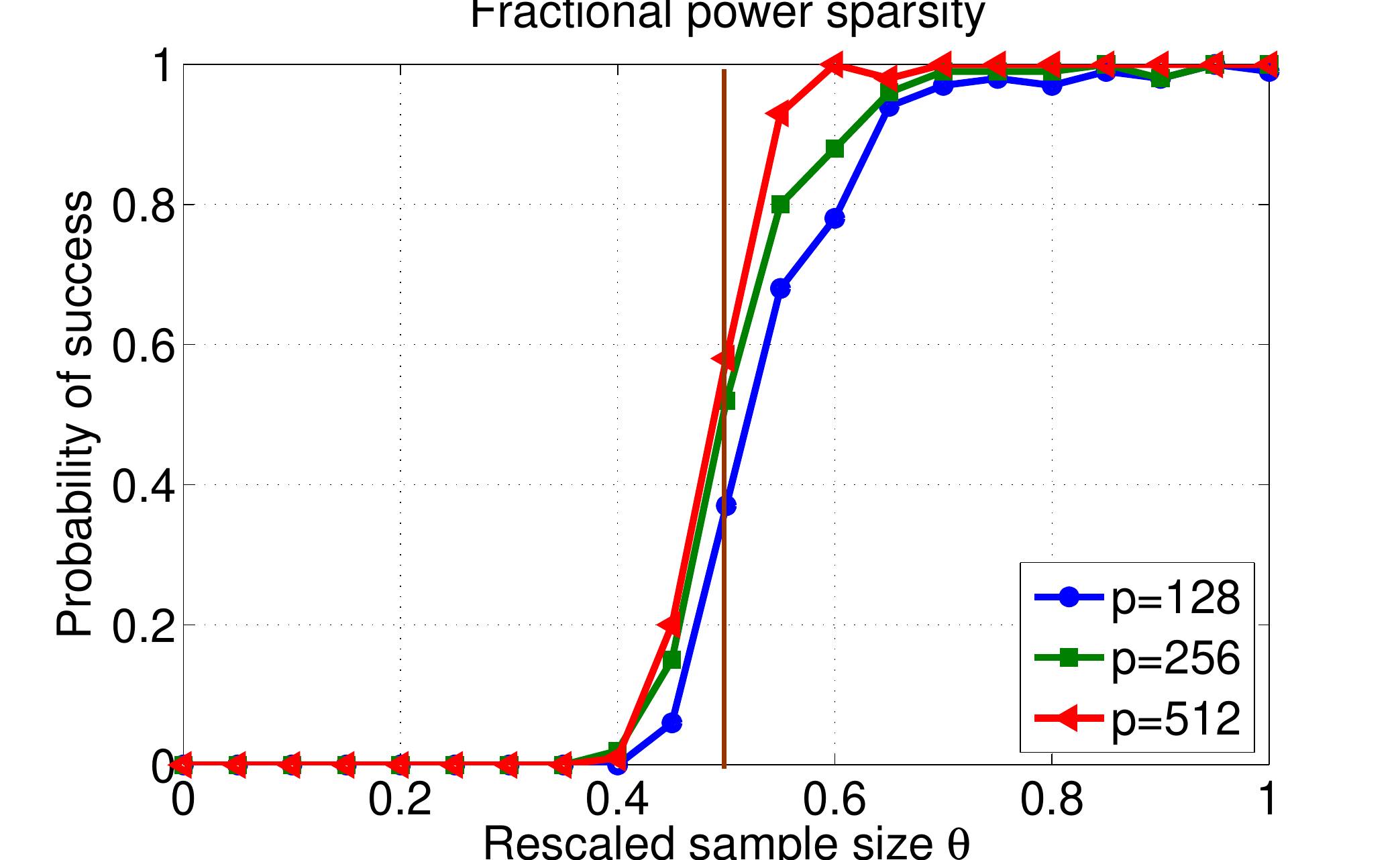} }
\caption{\small Probability of success in recovering the signed supports}
\label{fig::probability}
\end{figure}

\section{Proof of Theorem \ref{thm::parameter estimation} and related results}
\label{sec::proof 1}

\begin{proof} [\textit{Proof of Theorem \ref{thm::parameter estimation}}]
Since $(\widehat{\beta}, \widehat{e})$ is the pair of the optimal solution of (\ref{opt::lasso with sparse noise}), we have
\begin{equation}
\begin{split}
\label{inq::inequality of optimal solution - 1st}
&\frac{1}{2n} \norm{y - X \widehat{\beta} - \sqrt{n}\widehat{e} }_2^2 + \lambda_{n,\beta} \norm{\widehat{\beta}}_1 + \lambda_{n,e} \norm{\widehat{e}}_1 \\
&\leq \frac{1}{2n} \norm{y - X \beta^{\star} - \sqrt{n} e^{\star}}_2^2 + \lambda_{n,\beta} \norm{\beta^{\star}}_1 + \lambda_{n,e} \norm{e^{\star}}_1.
\end{split}
\end{equation}

\noindent From $h = \widehat{\beta} - \beta^{\star}$ and $f = \widehat{e} - e^{\star}$, we can easily see that
\begin{align*}
\norm{y - X \widehat{\beta} - \sqrt{n}\widehat{e} }_2^2 &= \norm{y - X \beta^{\star} - \sqrt{n} e^{\star}}_2^2 \\
&- 2\inner{w, X h + \sqrt{n}f} + \norm{X h + \sqrt{n}f}_2^2.
\end{align*}

\noindent Moreover, it is clear that
\begin{equation*}
\begin{split}
\norm{\beta^{\star}}_1 - \norm{\widehat{\beta}}_1  &= \norm{\beta^{\star}}_1 - \norm{\beta^{\star} + h}_1 \\
&= \norm{\beta^{\star}}_1 - \norm{\beta^{\star} + h_T}_1 - \norm{h_{T^c}}_1 \\
&\leq \norm{h_T}_1 - \norm{h_{T^c}}_1.
\end{split}
\end{equation*}

\noindent We also have a similar bound with $e$
\begin{equation*}
\norm{e^{\star}}_1 - \norm{\widehat{e}}_1 \leq \norm{f_S}_1 - \norm{f_{S^c}}_1.
\end{equation*}

\noindent Putting these pieces into (\ref{inq::inequality of optimal solution - 1st}) we can bound
\begin{equation}
\label{inq::inequality of optimal solution - 2nd}
\begin{split}
&\frac{1}{2n} \norm{X h + \sqrt{n}f}_2^2 \\
&\leq \frac{1}{n}\inner{w, X h + \sqrt{n}f} + \lambda_{n,\beta} (\norm{h_T}_1 - \norm{h_{T^c}}_1) \\
&+ \lambda_{n,e} ( \norm{f_S}_1 - \norm{f_{S^c}}_1 ) \\
&\leq \frac{1}{n}\norm{X^* w}_{\infty} \norm{h}_1 + \frac{1}{\sqrt{n}} \norm{w}_{\infty} \norm{f}_1 \\
&+ \lambda_{n,\beta} (\norm{h_T}_1 - \norm{h_{T^c}}_1) + \lambda_{n,e} ( \norm{f_S}_1 - \norm{f_{S^c}}_1 ) \\
&\leq \left( \frac{1}{n} \norm{X^* w}_{\infty} + \lambda_{n,\beta} \right) \norm{h_T}_1 - \left( \lambda_{n,\beta} - \frac{1}{n}\norm{X^* w}_{\infty} \right) \norm{h_{T^c}}_1 \\
&{ }+ \left( \frac{1}{\sqrt{n}} \norm{w}_{\infty} + \lambda_{n,e} \right) \norm{f_S}_1 - \left(\lambda_{n,e} - \frac{1}{\sqrt{n}} \norm{w}_{\infty} \right) \norm{f_{S^c}}_1.
\end{split}
\end{equation}

\noindent By the choices of $\lambda_{n,\beta}$ and $\lambda_{n,e}$ in the lemma, we have $\frac{1}{n}\norm{X^* w}_{\infty} \leq \frac{\gamma}{2} \lambda_{n,\beta} \leq \frac{\lambda_{n,\beta}}{2}$ and $\frac{1}{\sqrt{n}} \norm{w}_{\infty} \leq \frac{\lambda_{n,e}}{2}$. Therefore,
\begin{align*}
\frac{1}{2n} \norm{X h + \sqrt{n} f}_2^2 &\leq \lambda_{n,\beta} \frac{3}{2} \norm{h_T}_1 - \frac{\lambda_{n,\beta}}{2} \norm{h_{T^c}}_1 \\
&+ \frac{3}{2} \lambda_{n,e}  \norm{f_S}_1 - \frac{1}{2} \lambda_{n,e}  \norm{f_{S^c}}_1 .
\end{align*}

\noindent The left-hand side is greater than zero, thus the error pair $(h,f)$ belongs to the set $\C$ defined in (\ref{eqt::set C}). Hence, by the extended RE,
\begin{align*}
\kappa^2_l (\norm{h}_2 + \norm{f}_2)^2 &\leq 3 \lambda_{n,\beta} \norm{h_T}_1 + 3 \lambda_{n,e} \norm{f_S}_1 \\
&\leq 3 \lambda_{n,\beta} \sqrt{k}\norm{h}_2 + \lambda_{n,e} \sqrt{s} \norm{f}_2 ,
\end{align*}
where the last inequality follows from the crude $\ell_1/\ell_2$ bound: $\norm{h_T}_1 \leq \sqrt{k} \norm{h}_2$. If $\lambda \sqrt{s/k} \leq 1$, the right-hand side is upper bounded by $3\lambda_{n,\beta} \sqrt{k} (\norm{h}_2 + \norm{f}_2)$. On the other hand, it is upper bounded by $3\lambda_{n,e} \sqrt{s} (\norm{h}_2 + \norm{f}_2)$ if $\lambda \sqrt{s/k} \geq 1$. Combining these pieces together, we conclude
$$
\norm{h}_2 + \norm{f}_2 \leq 3\kappa_l^{-2} \max \left\{ \lambda_{n,\beta} \sqrt{k}, \lambda_{n,e} \sqrt{s} \right\},
$$
which completes our proof.
\end{proof}

\begin{proof} [\textit{Proof of Lemma \ref{lem::extended RE with Gaussian matrix}}]
Decompose $\frac{1}{n}\norm{Xh + \sqrt{n} f}_2^2 = \frac{1}{n}\norm{Xh}_2^2 + \norm{f}_2^2 + \frac{2}{\sqrt{n}} \inner{Xh, f}$. In order to lower bound the left-hand side, our main tool is to control the lower bound of each term on the right-hand side.

To establish a lower bound of $\frac{1}{n}\norm{Xh}_2^2$, we leverage an appealing result of \cite{RWY_RE_2010_J}. This result stated that for any Gaussian random matrix $X$ with i.i.d. $\oper N(0, \Sigma)$ rows, there exists universal positive constants $c_1, c_2$ such that the following inequality holds with probability greater than $1 - c_1 \exp(-c_2 n)$
\begin{equation}
\label{inq::lower bound l2 of Xh}
\frac{1}{\sqrt{n}}\norm{X v}_2 \geq \frac{\sqrt{C_{\min}}}{4} \norm{v}_2 - 9 \sqrt{\xi (\Sigma)} \sqrt{\frac{\log p}{n}} \norm{v}_1 \end{equation}
for $\forall v \in \R^p$. Here, we remind the reader of the notation $\xi (\Sigma) = \max_{j=1,...,d} \Sigma_{jj}$ and $C_{\min} = \lambda_{\min} (\Sigma)$.

\noindent We now apply this inequality for the error vector $h$ in the set $\C$. Since $h \in \C$, we have
\begin{align*}
\norm{h}_1 &\leq 4 \norm{h_T}_1 + 3 \lambda \norm{f_S}_1 \leq 4 \sqrt{k} \norm{h}_2 + 3 \lambda \sqrt{s} \norm{f}_2.
\end{align*}

\noindent Next taking advantage of (\ref{inq::lower bound l2 of Xh}) yields
\begin{align*}
\frac{1}{\sqrt{n}}\norm{Xh}_2 &\geq \left( \frac{\sqrt{C_{\min}}}{4} - 36 \sqrt{\frac{\xi k \log p}{n}} \right) \norm{h}_2 \\
&- 27 \lambda \sqrt{\frac{\xi s \log p}{n}} \norm{f}_2.
\end{align*}
where we denote the shorthand notation $\xi := \xi(\Sigma)$. This inequality leads to
\begin{align*}
\frac{1}{\sqrt{n}}\norm{X h}_2 + \norm{f}_2 &\geq \left( \frac{\sqrt{C_{\min}}}{4} - 36 \sqrt{\frac{\xi k \log p}{n}} \right) \norm{h}_2 \\
&+ \left( 1 - 27 \lambda \sqrt{\frac{\xi s \log p}{n}} \right) \norm{f}_2.
\end{align*}

\noindent From the assumptions of the lemma and the choice of $\lambda$ in (\ref{eqt::lambda_n}), the two quantities in the brackets are strictly greater than $0$. Thus, $\frac{1}{\sqrt{n}} \norm{X h}_2 + \norm{f}_2 \geq \frac{1}{2} (\norm{h}_2 + \norm{f}_2)$; or equivalent $\frac{1}{n}\norm{X h}^2_2 + \norm{f}^2_2 \geq \frac{1}{8} (\norm{h}_2^2 + \norm{f}_2^2)$.

\noindent In combination with the following lemma \ref{lem::bound inner product of Xh and f}, we conclude that
$$
\frac{1}{n}\norm{Xh + \sqrt{n} f}^2_2 \geq \frac{1}{16} (\norm{h}_2^2 + \norm{f}_2^2),
$$
as claimed.
\end{proof}

\begin{lem}
\label{lem::bound inner product of Xh and f}
Consider the random Gaussian design matrix $X$ whose rows are i.i.d. $\oper N(0,\Sigma)$. Assume that $n^2 C_{\max} \xi(\Sigma) = \Theta(1)$. Suppose that $s \leq C_1 \frac{k \log p}{\gamma \log n}$ and $n \geq C_2 k \log p$, then the following inequality holds with probability greater than $1 - \exp(-c n)$
\begin{equation*}
\frac{2}{\sqrt{n}} |\inner{Xh, f}| \leq \frac{1}{16} (\norm{h}_2^2 + \norm{f}_2^2).
\end{equation*}
\end{lem}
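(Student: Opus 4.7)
The plan is to split the inner product according to the supports and their complements,
$$
\inner{Xh,f} = \inner{X_{ST}h_T, f_S} + \inner{X_{S^cT}h_T, f_{S^c}} + \inner{X_{ST^c}h_{T^c}, f_S} + \inner{X_{S^cT^c}h_{T^c}, f_{S^c}},
$$
and to control each of the four pieces separately. The only cone structure I will use is the pair of $\ell_1$--$\ell_2$ inequalities that follow from $(h,f)\in\C$ together with the trivial bounds $\norm{h_T}_1\leq\sqrt k\norm{h_T}_2$ and $\norm{f_S}_1\leq\sqrt s\norm{f_S}_2$, namely $\norm{h_{T^c}}_1 \leq 3\sqrt k\norm{h}_2 + 3\lambda\sqrt s\norm{f}_2$ and $\norm{f_{S^c}}_1 \leq (3/\lambda)\sqrt k\norm{h}_2 + 3\sqrt s\norm{f}_2$. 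The eventual target is to show each piece, after division by $\sqrt n$, contributes at most $\tfrac{1}{64}(\norm{h}_2^2+\norm{f}_2^2)$, since then $2ab\leq a^2+b^2$ collects everything into the advertised $\tfrac{1}{16}(\norm{h}_2^2+\norm{f}_2^2)$.

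For the first piece I would apply Cauchy--Schwarz with an operator-norm bound on the $s\times k$ Gaussian submatrix $X_{ST}$ whose rows are i.i.d.\ $\oper N(0,\Sigma_{TT})$: standard concentration (Davidson--Szarek) yields $\norm{X_{ST}} \leq C\sqrt{C_{\max}}(\sqrt s+\sqrt k)$ with probability $1-e^{-cn}$, so $|\inner{X_{ST}h_T,f_S}|/\sqrt n \leq C\sqrt{C_{\max}(s+k)/n}\,\norm{h}_2\norm{f}_2$, which is $o(1)$ under $n\geq C_2 k\log p$ and $s\leq C_1 k\log p/(\gamma\log n)$. For the two ``mixed'' pieces I would use Hölder in complementary ways: $|\inner{X_{S^cT}h_T,f_{S^c}}| \leq \norm{Xh_T}_\infty\norm{f_{S^c}}_1$ and $|\inner{X_{ST^c}h_{T^c},f_S}| \leq \norm{h_{T^c}}_1\norm{X_{T^c}^{\!*}f_S}_\infty$. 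The two $\ell_2\to\ell_\infty$ suprema are controlled by maximum-row-norm concentration of an $n\times k$ (resp.\ $s\times(p-k)$) Gaussian, giving $\sup_{\norm{h_T}_2=1}\norm{Xh_T}_\infty \leq C\sqrt{C_{\max}(k+\log n)}$ and $\sup_{\norm{f_S}_2=1}\norm{X_{T^c}^{\!*}f_S}_\infty \leq C\sqrt{\xi(\Sigma)(s+\log(p-k))}$; combined with the cone $\ell_1$ bounds they contribute terms of order $\sqrt{k/n}\,\norm{h}_2\norm{f}_2$, $\lambda\sqrt{s/n}\,\norm{f}_2^2$, and similar, each shrinking under the stated scaling.

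The fourth piece $\inner{X_{S^cT^c}h_{T^c},f_{S^c}}$ is the main obstacle, since both arguments are off-support and only controlled in $\ell_1$. Here I would use the crude but uniform estimate $|\inner{Xh_{T^c},f_{S^c}}| \leq \norm{X}_{\max}\norm{h_{T^c}}_1\norm{f_{S^c}}_1$ with $\norm{X}_{\max}:=\max_{i,j}|X_{ij}| \leq C\sqrt{\xi(\Sigma)\log(np)}$ by the union bound over $np$ Gaussians. Plugging in the cone bounds on $\norm{h_{T^c}}_1$ and $\norm{f_{S^c}}_1$ produces three residual terms proportional to $\sqrt{\xi\log(np)/n}$ times $(k/\lambda)\norm{h}_2^2$, $2\sqrt{ks}\,\norm{h}_2\norm{f}_2$, and $\lambda s\,\norm{f}_2^2$ respectively. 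With the calibrated choice $\lambda = \gamma\sqrt{\log n/\log p}/\sqrt{\xi}$ from \eqref{eqt::lambda_n} substituted in, the first coefficient becomes proportional to $(\sqrt{\xi}/\gamma)\sqrt{k^2\log(np)\log p/(n\log n)}$, the second to $\sqrt{\xi\,ks\log(np)/n}$, and the third to $\gamma\sqrt{s^2\log(np)\log n/(n\log p)}\cdot\sqrt{\xi}$; the hypotheses $n\geq C_2 k\log p$ and $s\leq C_1 k\log p/(\gamma\log n)$ are precisely what make each of these at most $\tfrac{1}{64}$ for suitably chosen constants. Intersecting the three high-probability events (operator norm for $X_{ST}$, maximum row norms for $X_T$ and $X_{T^c}^{\!*}$ on sparse spheres, and maximum entry of $X$) by a union bound yields the claimed probability $1-e^{-cn}$.
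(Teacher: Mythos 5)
There is a genuine gap, and it sits exactly where you anticipated trouble: the off-support piece $\inner{X_{S^cT^c}h_{T^c},f_{S^c}}$. The entrywise bound $\abs{\inner{Xh_{T^c},f_{S^c}}}\leq \norm{X}_{\max}\norm{h_{T^c}}_1\norm{f_{S^c}}_1$ is too lossy by a polynomial factor, and the final arithmetic does not close. Take your own middle coefficient $\sqrt{\xi\, k s \log(np)/n}$: under the hypotheses $s\leq C_1 k\log p/(\gamma\log n)$ and $n\geq C_2 k\log p$ one gets $ks/n \leq C_1 k^2\log p/(\gamma n \log n) \leq (C_1/(C_2\gamma))\, k/\log n$, so the coefficient is of order $\sqrt{k\log(np)/\log n}$, which \emph{grows} like $\sqrt{k}$ rather than being at most $1/64$. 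The same happens to the other two terms from this piece (each reduces to $\sqrt{k}$ times logarithmic factors), and a similar loss already appears in your mixed pieces: e.g.\ $\norm{X_{S^cT}h_T}_\infty\norm{f_{S^c}}_1/\sqrt n$ contains a term of order $(k/(\lambda\sqrt n))\norm{h}_2^2 \asymp \sqrt{k^2\log p/(n\log n)}/\gamma \asymp \sqrt{k/\log n}$, again unbounded in $k$. The root cause is that pairing an $\ell_1$ norm of a $k$- (or $s$-) dimensional effective vector with an $\ell_\infty$-type matrix bound costs $\sqrt{ks}$ where an operator-norm argument costs only $\sqrt{k}+\sqrt{s}$, and that gap is precisely what the lemma cannot afford.

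The paper's proof avoids this by a shelling/block decomposition: $T^c$ is partitioned into blocks $T_j$ of size $k$ (ordered by decreasing magnitude of $h$) and $S^c$ into blocks $S_i$ of size $s'$, so that every term $\inner{X_{S_iT_j}h_{T_j},f_{S_i}}$ is controlled by the \emph{spectral} norm of an $s'\times k$ Gaussian submatrix, $\norm{X_{S_iT_j}}\lesssim \sqrt{C_{\max}}(\sqrt{k}+\sqrt{s'}+\tau\sqrt{n})$, uniformly over all blocks via a union bound over $\binom{p}{k}\binom{n}{s'}$ choices (absorbed into $e^{-cn}$ under the sample-size condition). The sums $\sum_j\norm{h_{T_j}}_2$ and $\sum_i\norm{f_{S_i}}_2$ are then bounded by $\norm{h}_2,\norm{f}_2$ plus cone terms using the standard inequality $\sum_{j\geq 3}\norm{h_{T_j}}_2\leq k^{-1/2}\norm{h_{T^c}}_1$, and the free block size $s'$ is tuned to $\Theta(k\log p/(\gamma^2\log n))$ to balance $\lambda\sqrt{s'/k}$ against $\lambda^{-1}\sqrt{k/s'}$. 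If you want to salvage your four-way split, you would have to replace the $\norm{X}_{\max}$ and row-norm estimates on the off-support blocks by exactly this kind of restricted operator-norm bound over sparse index sets; at that point you have essentially reconstructed the paper's argument.
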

\begin{proof}
Divide the set $T^c$ into subset $T_1,T_2,..., T_q$ of size $k$ such that the first set $T_1$ contains $k$ entries of $h$ indexed by $T$, the set $T_2$ contains $k$ largest absolute entries of the vector $h_{T^c}$, $T_3$ contains the second $k$ largest absolution entries of $h_{T^c}$ and so on. By the same strategy, we also divide the set $S^c$ into subset $S_1, S_2,..., S_r$ such that the first set $S_1$ contains $s$ entries of $f$ indexed by $S$ and sets $S_2, S_3,...$ are of size $s' \geq s$.

\noindent We now have
\begin{align*}
\frac{1}{\sqrt{n}} |\inner{Xh,f}| &\leq \sum_{i,j} \frac{1}{\sqrt{n}} |\inner{X_{S_i T_j} h_{T_j}, f_{S_i} } | \\
&\leq \max_{ij} \frac{1}{\sqrt{n}} \norm{X_{S_i T_j}} \sum_{ij} \norm{h_{T_j}}_2 \norm{f_{S_i}}_2.
\end{align*}

\noindent Notice that the matrix $X_{S_i T_j}$ is the random Gaussian matrix whose rows are $\oper N(0, \Sigma_{T_j T_j})$. By the random Gaussian matrix concentration in Lemma \ref{lem::standard random matrix inequality of singular values} in Appendix \ref{appen::concentration inequalities}, we have with probability greater than $1- 2\exp(-\tau^2 n/2)$,
\begin{align*}
\frac{1}{\sqrt{n}} \norm{X_{S_i T_j}} \leq \norm{\Sigma^{1/2}_{T_j T_j}} \left( \sqrt{\frac{k}{n}}+\sqrt{\frac{s'}{n}} + \tau \right) .
\end{align*}

\noindent Taking the union bound over all possibility of $T_j$ and $S_i$, we have this inequality holds with probability at least $1- 2 \binom{n}{s} \binom{p}{k}\exp(-\tau^2 n/2) $. Assuming that $n \geq c^{-1}_1 k \log (p/k)$, we have $\binom{p}{k} \leq \left( \frac{e p}{k} \right)^k \leq \exp(c_1 n)$. In addition, assuming $n \geq c^{-1}_2 s' \log (n/s')$, we have $\binom{n}{s'} \leq \left(\frac{e n}{s'} \right)^s \leq \exp(c_2 n)$. Therefore, with sufficiently small constants $c_1$ and $c_2$, we get
\begin{equation*}
\max_{ij} \frac{1}{\sqrt{n}} \norm{X_{S_i T_j}} \leq \sqrt{C_{\max}}  \left(\sqrt{\frac{k}{n}}+\sqrt{\frac{s'}{n}} + \tau \right).
\end{equation*}
with probability at least $1 - \exp(- (\tau^2/2 -c_1 -c_2) n)$ where we recall the definition of $C_{\max} := \lambda_{\max} (\Sigma)$.

A standard bound in \cite{CRT_Stability_2006a_J} gives us: $\sum_{i=3}^q \norm{h_{T_i}}_2 \leq k^{-1/2} \norm{h_{T^c}}_1$. In addition, since $h$ belongs to the set $\C$, $\norm{h_{T^c}}_1 \leq 3 \sqrt{k} \norm{h}_2 + 3\lambda \sqrt{s} \norm{f}_2 $. Hence,
\begin{align*}
\sum_{i=1}^q \norm{h_{T_i}}_2 &\leq 2 \norm{h}_2 + \sum_{i=3}^q \norm{h_{T_i}}_2 \leq 5 \norm{h}_2 + 3 \lambda \sqrt{\frac{s}{k}} \norm{f}_2.
\end{align*}

\noindent Similar manipulations along with the choice of $s' \geq s$ also yields
$$
\sum_{i=3}^r \norm{f_{S_i}}_2 \leq s'^{-1/2} \norm{f_{S^c}}_1 \leq \frac{3}{\lambda} \sqrt{\frac{k}{s'}} \norm{h}_2 + 3\norm{f}_2,
$$
leading to
$$
\sum_{i=1}^r \norm{f_{S_i}}_2 \leq \frac{3}{\lambda} \sqrt{\frac{k}{s'}} \norm{h}_2 + 5 \norm{f}_2.
$$

\noindent Hence, $\frac{1}{\sqrt{n}}|\inner{Xh,f}| $ is upper bounded by
\begin{align*}
&C^{1/2}_{\max}  \left(\sqrt{\frac{k}{n}}+\sqrt{\frac{s'}{n}} + \tau \right) \\
&\times \left(5 \norm{h}_2 + 3 \lambda \sqrt{\frac{s'}{k}} \norm{f}_2 \right) \left(\frac{3}{\lambda} \sqrt{\frac{k}{s'}} \norm{h}_2 + 5 \norm{f}_2 \right) \\
&\leq 25 C^{1/2}_{\max}  \left(\sqrt{\frac{k}{n}}+\sqrt{\frac{s'}{n}} + \tau \right) \\
& \times \max \left\{ \lambda \sqrt{\frac{s'}{k}}, \frac{1}{\lambda} \sqrt{\frac{k}{s'}} \right\} (\norm{h}_2 + \norm{f}_2)^2.
\end{align*}

\noindent We select $s' = C \frac{k \log p}{\gamma^2 \log n}$ with an appropriate constant $C$. From the assumption that $C_{\max} \xi(\Sigma) = \Theta (1)$ and a few algebraic manipulations, we can show that $25 C^{1/2}_{\max} \max \left\{ \lambda \sqrt{\frac{s'}{k}}, \frac{1}{\lambda} \sqrt{\frac{k}{s'}} \right\} \leq c \frac{1}{\sqrt{n}}$. Therefore,
\begin{align*}
\frac{1}{\sqrt{n}} |\inner{Xh, f}| &\leq c \left(\sqrt{\frac{k}{n}} + \sqrt{\frac{s'}{n}} + \tau \right) (\norm{h}_2 + \norm{f}_2)^2 \\
&\leq \frac{1}{16} (\norm{h}_2 + \norm{f}_2)^2
\end{align*}
for sufficiently small $\tau$ and $n \geq C s'$.
\end{proof}

\section{Proof of Theorem \ref{thm::Achievability - Gaussian design} - Achievability}
\label{sec::proof 2}

By KKT condition, $\widehat{\beta}$ and $\widehat{e}$ is a pair of solution of (\ref{opt::lasso with sparse noise}) if and only if the following set of equations satisfies
\begin{eqnarray}
\label{eqt::KKT condition on beta}
-  \frac{1}{n} X^* (y - X \widehat{\beta} - \widehat{e}) + \lambda_{n,\beta} z^{(\beta)} &=& 0\\
\label{eqt::KKT condition on e}
- \frac{1}{\sqrt{n}}(y - X \widehat{\beta} - \widehat{e}) + \lambda_{n,e} z^{(e)} &=& 0,
\end{eqnarray}
where $z^{(\beta)}$ and $z^{(e)}$ are elements of the subgradients of the $\ell_1$ norm evaluated at $\widehat{\beta}$ and $\widehat{e}$, respectively. It has been well established that $(\widehat{\beta},\widehat{e})$ is the unique solution to the extended Lasso program if
\begin{equation}
\label{eqt::conditions on beta}
\begin{cases}
   \frac{1}{n} X^*_i (y - X \widehat{\beta} - \widehat{e}) = \lambda_{n,\beta} \sgn(\widehat{\beta}_i)  \quad & \text{for } \widehat{\beta}_i \neq 0 \\
    |z^{(\beta)}_i| = \frac{1}{n\lambda_{n,\beta}} | X^*_i (y - X \widehat{\beta} - \widehat{e}) | < 1 \quad  & \text{for } \widehat{\beta}_i = 0 .\\
\end{cases}
\end{equation}
and
\begin{equation}
\label{eqt::conditions on e}
\begin{cases}
    \frac{1}{\sqrt{n}}( y_i - X_i \widehat{\beta} - \widehat{e}_i) = \lambda_{n,e} \sgn(\widehat{e}_i)  \quad  & \text{for } \widehat{e}_i \neq 0 \\
    |z^{(e)}_i| = \frac{1}{\sqrt{n}\lambda_{n,e}} | y_i - X_i \widehat{\beta} - \widehat{e}_i | < 1 \quad  & \text{for } \widehat{e}_i = 0 ,\\
\end{cases}
\end{equation}

We will show that under the assumptions of Theorem \ref{thm::Achievability - Gaussian design}, the solution pair of the extended Lasso is given by $(\widehat{\beta}, \widehat{e}) = (\beta^{\star} + h,e^{\star} + g)$ where $h_{T^c} = 0$, $g_{S^c} = 0$ and
\begin{equation}
\begin{split}
\label{eqt::h_T}
h_T &= ( X^*_{S^c T} X_{S^cT} )^{-1} \\
&\times [ X^*_{S^c T} w_{S^c} + \sqrt{n} \lambda_{n,e} X^*_{S T} \sgn(e^{\star}_S) - n \lambda_{n,\beta} \sgn(\beta^{\star}_T) ],
\end{split}
\end{equation}
and
\begin{equation}
\begin{split}
\label{eqt::g_S}
g_S &= -\frac{1}{\sqrt{n}} X_{ST} ( X^*_{S^c T} X_{S^cT} )^{-1} \\
&\times [ X^*_{S^c T} w_{S^c} + \sqrt{n} \lambda_{n,e} X^*_{S T} \sgn(e^{\star}_S) - n \lambda_{n,\beta} \sgn(\beta^{\star}_T) ] \\
&+ \frac{1}{\sqrt{n}} w_S - \lambda_{n,e} \sgn(e^{\star}_S) .
\end{split}
\end{equation}

The expressions of $h_T$ and $g_S$ in the above equations are obtained by solving the KKT conditions (\ref{eqt::KKT condition on beta}) and (\ref{eqt::KKT condition on e}) restricted on $\widehat{\beta}_{T^c} = 0$ and $\widehat{e}_{S^c} = 0$ together with setting $z^{(\beta)}_T = \sgn(\beta^{\star}_T)$ and $z^{(e)}_S = \sgn(e^{\star}_S)$. We note that due to the conditions of the sample size $n$ and the fraction of errors in Theorem \ref{thm::Achievability - Gaussian design}, $X^*_{S^c T} X_{S^cT}$ is invertible thanks to the random Gaussian matrix concentration inequalities (see Lemma \ref{lem::standard random matrix inequality of singular values} in Appendix \ref{appen::concentration inequalities}). Therefore, the expressions of $h_T$ and $g_S$ are valid.

To confirm that $(\widehat{\beta}, \widehat{e})$ is the optimal solution of the extended Lasso (\ref{opt::lasso with sparse noise}), in the following subsections, we will check that $\widehat{\beta}$ and $\widehat{e}$ chosen above obey conditions (\ref{eqt::conditions on beta}) and (\ref{eqt::conditions on e}). In particular,
\begin{enumerate}
    \item In Subsection \ref{sec::upper Linf z^beta_Tc}, we show that $\norm{ z^{(\beta)}_{T^c} }_{\infty} < 1$.
    \item In Subsection \ref{sec::bound infty norm of z^e_S^c}, we show that $ \norm{ z^{(e)}_{S^c} }_{\infty} < 1$.
    \item In Subsection \ref{sec::upper Linf h_T}, we establish that $\norm{h_T}_{\infty} \leq f_{\beta}(\lambda_{n,\beta})$. It then follows from the assumptions of Theorem \ref{thm::Achievability - Gaussian design} that $\norm{h_T}_{\infty} < \min_{i\in T} |\beta^{\star}_i|$ and, therefore, $\text{supp}(\widehat{\beta}_T) = \text{supp}(\beta^{\star}_T)$ and $\sgn(\widehat{\beta}_T) = \sgn(\beta^{\star}_T)$.
    \item In Subsection \ref{sec::upper Linf f_S}, we establish that $\norm{g_S}_{\infty} \leq f_e(\lambda_{n,\beta}, \lambda_{n,e})$. It then follows from the assumptions Theorem \ref{thm::Achievability - Gaussian design} that $\norm{g_S}_{\infty} < \min_{i \in S} |e^{\star}_i|$ and, therefore, $\text{supp}(\widehat{e}_S) = \text{supp}(e^{\star}_S)$ and $\sgn(\widehat{e}_S) = \sgn(e^{\star}_S)$.
\end{enumerate}

\subsection{Verify the upper bound of $\norm{z^{(\beta)}_{T^c}}_{\infty}$}
\label{sec::upper Linf z^beta_Tc}

\begin{proof}
First, we define a notation which will be used throughout the rest of the paper. Let $\lambda : = \frac{\lambda_{n,e}}{\lambda_{n,\beta}}$. By the definition of $\lambda_{n,\beta}$ and $\lambda_{n,e}$ in (\ref{inq::equation of lambda1}), we have
\begin{equation}
\label{eqt::equation of lambda}
\lambda = \frac{\gamma}{ 2\sqrt{\max \{ \rho_u, D^+_{\max}  \}}}  \sqrt{\frac{1}{\eta \log p}},
\end{equation}
where we introduce another shorthand notation $\rho_u = \rho_u (\Sigma_{T^c|T})$.

From the expression of $\widehat{\beta} = \beta^{\star} + h$ and $\widehat{e} = e^{\star} + g$ with $h_{T^c}=0$, $g_{S^c} = 0$ and $h_T$, $g_S$ defined in (\ref{eqt::h_T}) and (\ref{eqt::g_S}), we substitute into $z^{(\beta)}_{T^c} = \frac{1}{\lambda_{n,\beta}} X^*_{T^c}(y - X \widehat{\beta} - \widehat{e})$ together with noticing that $X^*_{T^c} X_{T} - X^*_{S T^c} X_{ST} = X^*_{S^c T^c} X_{S^c T}$, $ X^*_{T^c} w  - X^*_{S T^c} w_S = X^*_{S^c T^c} w_{S^c}$ to arrive at
\begin{equation}
\label{eqt::final equation of z^beta_T^c}
\begin{split}
z^{(\beta)}_{T^c} &= \frac{1}{n \lambda_{n,\beta}} X^*_{S^c T^c} \Pi_{S^c T} w_{S^c} \\
&{ }-  X^*_{S^c T^c} X_{S^c T} ( X^*_{S^c T} X_{S^cT} )^{-1} z \\
&{ }+ \frac{1}{\sqrt{n}}\lambda X^*_{S T^c} \sgn(e^{\star}_S).
\end{split}
\end{equation}
 Here, we define $\Pi_{S^c T} := I - X_{S^c T} ( X^*_{S^c T} X_{S^cT} )^{-1} X^*_{S^c T}$ which is an orthogonal projection onto the column space of $X_{S^c T}$ and $z := \frac{1}{\sqrt{n}}\lambda X^*_{S T} \sgn(e^{\star}_S) - \sgn(\beta^{\star}_T)$.

We can further simplify the expression of $z^{(\beta)}_{T^c}$ by denoting
\begin{equation}
\label{eqt::v}
v:= \left( \begin{array}{c}
 \frac{1}{\sqrt{n}} \lambda \sgn(e^{\star}_S) \\
 \frac{1}{ n \lambda_{n,\beta}} \Pi_{S^c T} w_{S^c} - X_{S^c T} ( X^*_{S^c T} X_{S^cT} )^{-1} z \\                                                                   \end{array}
                                                                              \right) ,
\end{equation}
then we have
\begin{equation}
\label{eqt::reformulate z^beta_Tc}
z^{(\beta)}_{T^c} = [X^*_{ST^c} \quad X^*_{S^c T^c}] v = X^*_{T^c} v.
\end{equation}

\noindent Conditioning on $X_{T}$, the matrix $X^*_{T^c}$ can be decomposed into a linear prediction plus a prediction error
\begin{equation}
X^*_{T^c} = \Sigma_{T^c T} \Sigma^{-1}_{TT} X^*_{T} + E^*_{T^c},
\end{equation}
where each row of the matrix $E_{S^c T^c}$ is a $\oper N(0, \Sigma_{T^c | T})$ Gaussian random vector whose entries are i.i.d and $\Sigma_{T^c | T}$ is defined in (\ref{eqt::Sigma_Tc|T}). Therefore, $z^{(\beta)}_{T^c}$ consists of two components in which the first is
\begin{equation*}
a := \Sigma_{T^c T} \Sigma^{-1}_{TT} X^*_{T}  v ,
\end{equation*}
and the second is
\begin{equation}
\label{eqt::define b}
b := E^*_{T^c} v.
\end{equation}

\noindent Since $\Pi_{S^c T}$ is the orthogonal projection onto the space spanned by columns of the matrix $X_{S^c T}$, we have $X^*_{S^c T} \Pi_{S^cT} = 0$. Thus, $a$ can be simplified as
\begin{equation}
\label{eqt::define a}
\begin{split}
a &=  \frac{1}{\sqrt{n}}\Sigma_{T^c T} \Sigma^{-1}_{TT} (\lambda X^*_{ST} \sgn(e^{\star}_S)) - \Sigma_{T^c T} \Sigma^{-1}_{TT} z \\
&= \Sigma_{T^c T} \Sigma^{-1}_{TT} \sgn(\beta^{\star}_T).
\end{split}
\end{equation}

\noindent The mutual incoherent assumption in (\ref{inq::incoherence assumption}) gives us $\norm{a}_{\infty} \leq 1 -\gamma$. All that left is to establish the $\ell_{\infty}$-norm of the second component: $\norm{b}_{\infty} \leq \gamma$. Denote $E_i$ as the $i$-th column of the matrix $E_{T^c}$ and condition on $X_{S^cT}$, the $i$-th coefficient of the vector $b$: $b_i = \inner{E_i, v}$ is a Gaussian random variable with variance $\Var(b_i) := v^* \E E_i E_i^* v \leq \rho_u \norm{v}_2^2$ where $\norm{v}_2^2$ is quantified as,
\begin{equation}
\label{eqt::M}
M :=  \frac{\lambda^2 s}{n} + \frac{1}{n^2\lambda_{n,\beta}^2} \norm{\Pi_{S^c T} w_{S^c}}_2^2 + z^* (X_{S^c T}^* X_{S^c T})^{-1} z.
\end{equation}

We state two supporting lemmas whose proof are deferred to the end of this section.
\begin{lem}
\label{lem::bound Linf norm of z}
Denote $z = \frac{1}{\sqrt{n}}\lambda X^*_{ST} \sgn(e_S^{\star}) - \sgn(\beta_T^{\star}) $. Define the event
$$
\oper E_z := \left\{ \norm{z}_{\infty} \leq \lambda \sqrt{ \frac{D^+_{\max} s \log p}{n}}  + 1 \right\}.
$$
Then, $\Prob (\oper E_z ) \geq 1 - 2\exp(-\log p)$.
\end{lem}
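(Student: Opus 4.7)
My plan is to control $\|z\|_\infty$ by splitting the two terms in $z = \tfrac{\lambda}{\sqrt{n}} X^*_{ST}\sgn(e^\star_S) - \sgn(\beta^\star_T)$ via the triangle inequality. The deterministic piece $\sgn(\beta^\star_T)$ contributes exactly $1$ to the $\ell_\infty$ norm, which accounts for the additive $+1$ in the lemma. All that remains is to bound $\frac{1}{\sqrt{n}}\|X^*_{ST}\sgn(e^\star_S)\|_\infty$ by $\sqrt{D^+_{\max}\, s \log p / n}$ (up to an implicit constant absorbed into the statement) with probability at least $1 - 2\exp(-\log p)$.

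For the stochastic piece, I would fix $j \in T$ and observe that
\[
\bigl[X^*_{ST}\sgn(e^\star_S)\bigr]_j \;=\; \sum_{i \in S} X_{ij}\,\sgn(e^\star_i).
\]
Since the rows $X_i$ of the design matrix are i.i.d.\ $\mathcal{N}(0,\Sigma)$, the scalars $\{X_{ij}\}_{i \in S}$ for this fixed $j$ are i.i.d.\ $\mathcal{N}(0,\Sigma_{jj})$, and the signs $\sgn(e^\star_i)\in\{\pm 1\}$ are deterministic (independent of $X$). Hence the coordinate is a zero-mean Gaussian with variance $s\,\Sigma_{jj} \leq s\,D^+_{\max}$, where the inequality uses the fact that any diagonal entry of $\Sigma_{TT}$ is dominated by the maximum absolute row sum, which is $D^+_{\max} = \|\Sigma_{TT}\|_\infty$.

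I would then apply the standard Gaussian tail bound $\Pr(|G| \geq t) \leq 2\exp(-t^2/(2\sigma^2))$ with $\sigma^2 = s\,D^+_{\max}$ and a threshold of order $\sqrt{D^+_{\max}\,s\,\log p}$, and take a union bound over the $k \leq p$ coordinates $j \in T$. Choosing the constant inside the square root so that $t^2/(2\sigma^2) \geq 2\log p$ yields a per-coordinate failure probability of at most $2p^{-2}$, and a union bound over $k \leq p$ indices gives overall failure probability at most $2\exp(-\log p)$, matching the bound claimed in the lemma. Dividing through by $\sqrt{n}$ and multiplying by $\lambda$ then combining with the deterministic $\sgn(\beta^\star_T)$ piece yields the stated inequality.

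There is no real technical obstacle here: the estimate is a one-line Gaussian maximum inequality applied to $k$ scalar Gaussians whose variance is easy to compute because $\sgn(e^\star_S)$ is fixed and the entries of $X$ relevant to a single coordinate of $X^*_{ST}\sgn(e^\star_S)$ are i.i.d.\ across rows. The only point that requires a small amount of care is tracking constants to land on the exact tail probability $2\exp(-\log p)$; this is handled by choosing the threshold constant inside the $\sqrt{\cdot}$ so as to give a squared exponent of at least $2\log p$ per coordinate before the union bound.
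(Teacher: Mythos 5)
Your proof is correct and follows the same skeleton as the paper's: split off the deterministic $\sgn(\beta^\star_T)$ term by the triangle inequality, recognize each coordinate of $X^*_{ST}\sgn(e^\star_S)$ as a zero-mean Gaussian, apply the Gaussian tail bound, and take a union bound over the $k\le p$ coordinates, with threshold chosen so the per-coordinate exponent is $2\log p$. The one place you genuinely diverge is how $D^+_{\max}$ enters: the paper factors $X_{ST}=W_{ST}\Sigma_{TT}^{1/2}$, works with $V_i=\inner{w_i,\sgn(e^\star_S)}\sim\oper N(0,s)$, and then pulls out the factor via the claim $\norm{\Sigma_{TT}^{1/2}}_\infty\le\norm{\Sigma_{TT}}_\infty^{1/2}$ --- a step that is actually suspect, since submultiplicativity of the $\ell_\infty$ operator norm gives $\norm{\Sigma_{TT}}_\infty\le\norm{\Sigma_{TT}^{1/2}}_\infty^2$, i.e.\ the \emph{reverse} inequality. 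Your direct computation of the per-coordinate variance as $s\,\Sigma_{jj}\le s\,D^+_{\max}$ (a diagonal entry of a PSD matrix is dominated by its maximum absolute row sum) is airtight and sidesteps this issue entirely, so your route is the cleaner of the two. Both arguments land on $2\lambda\sqrt{D^+_{\max}s\log p/n}$ rather than the constant $1$ in front of the square root as stated in the lemma; you flag this explicitly and the paper's own choice $\tau=2\sqrt{s\log p}$ has the identical factor-of-two slack, so this is a shared, harmless constant discrepancy rather than a gap in your argument.
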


\begin{lem}
\label{lem::upper bound M}
For any $\epsilon \in (0,1)$, define the event $\overline{\oper E} = \{ M \leq \overline{M} \}$, where
\begin{equation}
\begin{split}
\label{eqt::M overline}
\overline{M} &:= \frac{1}{n}\lambda^2 s + \left( 1 + \max \left\{ \epsilon, 4 \sqrt{\frac{k}{n-s}} \right\} \right) \\
&\times \left( \frac{\sigma^2 (n-s)}{n^2 \lambda_{n,\beta}^2} + \frac{k \left(1 + \lambda\sqrt{\frac{D^+_{\max} s \log p}{n}} \right)^2}{(n-s)C_{\min}} \right).
\end{split}
\end{equation}
Then, $\Prob (\overline{\oper E} ) \geq 1 - c_1 \exp(- c_2 (n-s)\epsilon^2  \})$ for some universal constants $c_1, c_2 > 0$.
\end{lem}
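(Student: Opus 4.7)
\bigskip

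\textit{Proof plan for Lemma \ref{lem::upper bound M}.} The strategy is to decompose $M = A + B + C$, with $A := \lambda^2 s/n$ deterministic, $B := \norm{\Pi_{S^c T} w_{S^c}}_2^2/(n^2 \lambda_{n,\beta}^2)$, and $C := z^{*}(X^*_{S^c T} X_{S^c T})^{-1} z$, and bound $B$ and $C$ separately by exploiting independence of the relevant Gaussian blocks before combining via a union bound. The key observation is that $w_{S^c}$ is independent of $X_{S^c T}$, and that $z$ is a function of $X_{S T}$ only, hence independent of $X_{S^c T}$ as well; this independence lets us condition on $X_{S^c T}$ when estimating $B$, and on $X_{ST}$ when estimating $\norm{z}_\infty$.

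For the term $B$, conditional on $X_{S^c T}$ the projector $\Pi_{S^c T}$ has rank $(n-s)-k$, so $\norm{\Pi_{S^c T} w_{S^c}}_2^2/\sigma^2$ is a $\chi^2$ random variable with $(n-s)-k$ degrees of freedom. The Laurent–Massart tail inequality then yields
\begin{equation*}
\norm{\Pi_{S^c T} w_{S^c}}_2^2 \leq \sigma^2 (n-s)(1 + \epsilon)
\end{equation*}
with probability at least $1 - \exp\bigl(-c_2 (n-s)\epsilon^2\bigr)$, which produces the $\frac{\sigma^2(n-s)}{n^2\lambda_{n,\beta}^2}(1+\epsilon)$ contribution to $\overline{M}$.

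For the term $C$, I would first bound $C \leq \norm{z}_2^2 / \sigma_{\min}^2(X_{S^c T})$. Since $X_{S^c T}$ has i.i.d.\ $\mathcal{N}(0,\Sigma_{TT})$ rows, Lemma \ref{lem::standard random matrix inequality of singular values} gives
\begin{equation*}
\sigma_{\min}(X_{S^c T}) \geq \sqrt{(n-s) C_{\min}}\bigl(1 - \sqrt{k/(n-s)} - \tau\bigr)
\end{equation*}
with probability $\geq 1 - 2\exp(-\tau^2(n-s)/2)$; choosing $\tau$ a small constant and Taylor-expanding $(1 - \sqrt{k/(n-s)} - \tau)^{-2}$ gives a multiplicative factor of the form $1 + 4\sqrt{k/(n-s)}$. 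Meanwhile, the crude bound $\norm{z}_2^2 \leq k \norm{z}_\infty^2$ together with Lemma \ref{lem::bound Linf norm of z} produces $\norm{z}_2^2 \leq k\bigl(1 + \lambda\sqrt{D^+_{\max} s \log p / n}\bigr)^2$. Combining these yields the second summand of $\overline{M}$ with the $1 + 4\sqrt{k/(n-s)}$ inflation.

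Finally I would merge the two inflation factors into a single $\bigl(1 + \max\{\epsilon, 4\sqrt{k/(n-s)}\}\bigr)$ factor multiplying both contributions and take a union bound over the chi-squared event, the smallest-singular-value event, and the event of Lemma \ref{lem::bound Linf norm of z}; the chi-squared bound dominates, giving the claimed failure probability $c_1 \exp(-c_2(n-s)\epsilon^2)$. The main obstacle is the bookkeeping for the $(1 + \max\{\cdot\})$ form: getting the correction factors from two qualitatively different sources (chi-squared deviation vs.\ extreme singular-value deviation) to coalesce into a single clean multiplicative constant requires choosing $\tau$ carefully and absorbing cross terms, which is where care is needed to match the precise bound stated in $\overline{M}$.
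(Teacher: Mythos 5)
Your proposal is correct and follows essentially the same route as the paper: the same three-term decomposition of $M$, a $\chi^2$ tail bound for $\norm{\Pi_{S^c T} w_{S^c}}_2^2$ (the paper simply uses $\norm{\Pi_{S^c T} w_{S^c}}_2^2 \leq \norm{w_{S^c}}_2^2$ with $n-s$ degrees of freedom rather than the exact rank $(n-s)-k$, which changes nothing), the Gaussian singular-value concentration to get the $\bigl(1+4\sqrt{k/(n-s)}\bigr)$ factor on $z^*(X_{S^cT}^*X_{S^cT})^{-1}z$, the bound $\norm{z}_2^2 \leq k\norm{z}_\infty^2$ via Lemma \ref{lem::bound Linf norm of z}, and a final union bound. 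No gaps worth noting.
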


Conditioned on the event $\overline{\oper E}$ defined in Lemma \ref{lem::upper bound M}, the probability $\Prob (\max_{i \in T^c} |b_i| \geq \gamma)$ is upper bounded by
\begin{align*}
\Prob (\max_{i \in T^c} |b_i| \geq \gamma \text{   } | \text{   } \overline{\oper E}) + \exp(- c_2 (n-s)) .
\end{align*}

\noindent We recall that $b_i$ is a zero-mean Gaussian random variable, thus the standard Gaussian tail bound in (\ref{lem::bound for a sub-Gaussian random variables}) allows us to derive
$$
\Prob (\max_{i \in T^c} |b_i| \geq \gamma \text{   } | \text{   } \overline{\oper E}) \leq 2(p-k) \exp \left( - \frac{\gamma^2}{2 \rho_u \overline{M}} \right).
$$

\noindent This exponential probability decays at the rate of $\exp(-c \log(p-k))$ provided that $\frac{1}{\gamma^2 }2 \rho_u \overline{M} \log(p-k)$ is strictly less than one. Now we replace the definition of $\overline{M}$ in (\ref{eqt::M overline}) into this inequality. To do this, we notice that $\frac{k}{n-s} = o(1)$ from the sample size assumption of Theorem \ref{thm::Achievability - Gaussian design}, thus we can select $\epsilon \in (0,1)$ such that $4\sqrt{\frac{k}{n-s}} \leq \epsilon$. Following some simple algebra, we find that it is sufficient to have
\begin{multline*}
\frac{n-s}{1+\epsilon} > \frac{2 \rho_u}{C_{\min} \gamma^2} k \log(p-k)\times \left\{ \frac{C_{\min} (n-s)}{(1+\epsilon) k}\frac{\lambda^2 s}{n} \right. \\
\left. + \left(1 + \lambda \sqrt{\frac{D^+_{\max} s \log p}{n}}\right)^2 + \frac{(n-s)^2}{n^2} \frac{\sigma^2 C_{\min} }{ \lambda_{n,\beta}^2 k} \right\}.
\end{multline*}

\noindent Replace the expression of $\lambda$ in (\ref{eqt::equation of lambda}) and $s = \eta n$ and perform some simple algebra, we conclude that the $\ell_{\infty}$-norm of $z^{(\beta)}_{T^c}$ is strictly less than one as long as the following bound of the sample size obeys
\begin{align*}
\frac{n}{2(1+\epsilon)} > \frac{1}{(1-\eta)}&\frac{2 \rho_u}{C_{\min} \gamma^2 } k \log(p-k) \\
&\times \left\{ \frac{9}{4} + (1-\eta)^2 \frac{\sigma^2 C_{\min} }{ \lambda_{n,\beta}^2 k }\right\},
\end{align*}
which matches with the assumption of Theorem \ref{thm::Achievability - Gaussian design}.
\end{proof}



\begin{proof} [Proof of Lemma \ref{lem::bound Linf norm of z}]
Recall the expression of $z$ in the lemma, we have by the triangular inequality, $ \norm{z}_{\infty} \leq \frac{\lambda}{\sqrt{n}} \norm{X^*_{ST} \sgn(e^{\star}_S)}_{\infty} + 1$. Furthermore, we know that the matrix $X_{ST}$ can be represented as $W_{ST} \Sigma_{TT}^{1/2}$ where $W_{ST} \in \R^{s \times k}$ is the random matrix with i.i.d. zero mean entries and unit variance. Hence,
\begin{align*}
\norm{X^*_{ ST} \sgn(e^{\star}_S)}_{\infty} &= \norm{ \Sigma^{1/2}_{TT} W^*_{S T} \sgn(e^{\star}_S)}_{\infty} \\
&\leq  \sqrt{D^+_{\max}} \norm{W^*_{S T} \sgn(e^{\star}_S)}_{\infty},
\end{align*}
where the inequality follows from matrix sub-multiplicative norm and $\norm{\Sigma^{1/2}_{TT}}_{\infty} \leq \norm{\Sigma_{TT}}^{1/2}_{\infty} = \sqrt{D^+_{\max}}$.

\noindent Consider the random variable $V_i = \inner{w_i, \sgn(e^{\star}_S)}$ where $w_i$ is a column vector of $W_{ST}$. Recall that each entry of $w_i$ is $\oper N (0, 1)$ and $\norm{\sgn(e^{\star}_S)}_2 = \sqrt{s}$. Hence, $V_i$ is a Gaussian r.v. with variance $s$. Applying Gaussian tail bound (\ref{lem::bound for a sub-Gaussian random variables}) in the Appendix together with taking the union bound yields
$$
\Prob \left( \norm{W^*_{S T} \sgn(e^{\star}_S) }_{\infty} \geq \tau \right) \leq 2 k \exp (- \tau^2 /2s ).
$$

\noindent Selecting $\tau = 2 \sqrt{s \log p}$ so that the probability exponentially decays to zero. Combining these inequalities completes the proof of Lemma \ref{lem::bound Linf norm of z}.
\end{proof}

\begin{proof} [Proof of Lemma \ref{lem::upper bound M}]
Since $\Pi_{S^c T}$ is the orthogonal projection matrix, we have $\norm{\Pi_{S^c T} w_{S^c}}_2^2 \leq \norm{w_{S^c}}_2^2$. In addition, $\frac{1}{\sigma^2} \norm{w_{S^c}}_2^2$ is the $\chi^2$-variate with $(n-s)$ degrees of freedom, thus
\begin{multline*}
\Prob \left( \frac{1}{n^2 \lambda_{n,\beta}^2} \norm{\Pi_{S^c T} w_{S^c}}_2^2 \geq (1+\epsilon) \frac{\sigma^2 (n-s)}{n^2\lambda_{n,\beta}^2} \right) \\
\leq  2 \exp \left( - \frac{3 (n-s) \epsilon^2}{16} \right).
\end{multline*}

\noindent Turning to the last term of $M$, by the spectral norm bound of the Gaussian random matrix (\ref{inq::spectral norm bound of general X^* X}), we obtain
\begin{align*}
z^* (X_{S^c T}^* X_{S^c T})^{-1} z \leq \left( 1 + 4 \sqrt{\frac{k}{n-s}} \right) \frac{\norm{z}_2^2}{(n-s)C_{\min}},
\end{align*}
with probability greater than $1 - c_1 \exp(-c_2 (n-s))$. Conditioned on the event $\oper E_z$ in Lemma \ref{lem::bound Linf norm of z}, we have $\norm{z}_2^2 \leq k \norm{z}_{\infty}^2 \leq k \left(1 + \lambda \sqrt{\frac{D^+_{\max} s \log p}{n}} \right)^2$. The proof is completed by combining these bounds.
\end{proof}

\subsection{Verify the upper bound of $\norm{z^{(e)}_{S^c}}_{\infty}$}
\label{sec::bound infty norm of z^e_S^c}

\begin{proof}
By replacing expressions of $\widehat{\beta}$ and $\widehat{e}$ into $z^{(e)}_{S^c} =  \frac{1}{\lambda_{n,e}}(y_{S^c} - X_{S^c} \widehat{\beta})$, we get
\begin{equation}
\label{eqt::final equation of z^e_S^c}
\begin{split}
z^{(e)}_{S^c} &= \frac{1}{\sqrt{n} \lambda_{n,e}} \Pi_{S^c T} w_{S^c} +  \frac{\sqrt{n}}{\lambda} X_{S^c T} ( X^*_{S^c T} X_{S^cT} )^{-1} z ,
\end{split}
\end{equation}
where we use the same notations of $\Pi_{S^c T}$ and $z$ as in the previous section: $\Pi_{S^c T} := I - X_{S^c T} ( X^*_{S^c T} X_{S^cT} )^{-1} X^*_{S^c T}$ and $z := \frac{1}{\sqrt{n}}\lambda X^*_{S T} \sgn(e^{\star}_S) - \sgn(\beta^{\star}_T)$. To show that $\norm{z^{(e)}_{S^c}}_{\infty} < 1$, we bound $\ell_{\infty}$-norm of each term of the sum (\ref{eqt::final equation of z^e_S^c}) separately. In particular, we will establish that with probability converging to one, the $\ell_{\infty}$-norm of the first term is bounded by $\frac{2 \sigma \sqrt{\log n}}{\lambda_{n,e} \sqrt{n}}$ and that of the second term is less than $(1 - \frac{2 \sigma \sqrt{\log n}}{\lambda_{n,e} \sqrt{n}})$. The proof is therefore completed by the triangular inequality.

We begin by establishing the $\ell_{\infty}$-norm of the first term of $z^{(e)}_{S^c}$ in (\ref{eqt::final equation of z^e_S^c}):
$$
\frac{1}{\sqrt{n} \lambda_{n,e}} \norm{\Pi_{S^c T} w_{S^c}}_{\infty} = \max_{i}  \frac{1}{ \sqrt{n} \lambda_{n,e}}  \left|\inner{ u_i , w_{S^c} }  \right|,
$$
where $u_i$ is a column vector of $\Pi_{S^c T}$. Since $\frac{1}{ \sqrt{n}\lambda_{n,e}} \inner{ u_i, w_{S^c} }$ is a sum of Gaussian random variables with zero mean and variance $\frac{\sigma^2}{n\lambda_{n,e}^2} \norm{u_i}_2^2$, it can be bounded by the Gaussian tail inequality in (\ref{lem::bound for a sub-Gaussian random variables}) in Appendix \ref{appen::concentration inequalities}. Notice that spectral norm of any orthogonal projection is one, $\norm{u_i}_2 \leq 1$. We have
$$
\Prob \left( \frac{1}{ \sqrt{n} \lambda_{n,e}}  \left| \inner{ u_i, w_{S^c} }  \right|  \geq \tau \right)  \leq 2 \exp  \left( - \frac{n \lambda^2_e \tau^2 }{2 \sigma^2} \right).
$$

\noindent Choose $\tau = \frac{2 \sigma \sqrt{\log n}}{\sqrt{n} \lambda_{n,e}}$ and take the union bound over all $|S^c|$ columns of the matrix $\Pi_{S^c T}$, we have
\begin{multline}
\label{inq::bound l-infty PiScT wSc}
\Prob \left( \frac{1}{\sqrt{n} \lambda_{n,e}} \norm{ \Pi_{S^c T} w_{S^c}}_{\infty} \geq \frac{2 \sigma \sqrt{\log n}}{\sqrt{n}\lambda_{n,e}} \right) \\
\leq 2 |S^c| \exp  \left( - 2 \log n \right).
\end{multline}


Next, we controlthe upper bound of $\frac{\sqrt{n}}{\lambda}\norm{X_{S^c T} (X^*_{S^c T} X_{S^c T})^{-1} z}_{\infty}$. The following lemma, whose proof is deferred to Appendix \ref{app::proof bound inf norm of Xsct inv Xsct Xsct z}, establishes this bound.
\begin{lem}
\label{lem::bound inf norm of Xsct inv Xsct Xsct z}
Under the assumptions of Theorem \ref{thm::Achievability - Gaussian design}, for any vector $z \in \R^k$ independent with $X_{S^c T}$, the following statement holds
$$
\frac{\sqrt{n}}{\lambda}\norm{X_{S^c T} (X_{S^c T}^* X_{S^c T} )^{-1} z }_{\infty} < \frac{2}{3} \left( 1 - \frac{2\sigma \sqrt{\log n}}{\lambda_{n,e} \sqrt{n}} \right) \norm{z}_{\infty}
$$
with probability greater than $1 - c_1 \exp(- c_2\max \{\log (p-k), \log(n-s) \})$.
\end{lem}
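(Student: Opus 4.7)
The plan is to reduce the problem to a standard Gaussian matrix, control each coordinate of $W(W^*W)^{-1}\tilde z$ via a leave-one-out (Sherman--Morrison) argument, and then union bound over the $|S^c|$ rows, verifying that the sample-size lower bound $n>n_2$ in Theorem \ref{thm::Achievability - Gaussian design} is precisely what is needed.

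First I would whiten and isolate the independence structure. Since the rows of $X_{S^c T}$ are i.i.d.\ $\oper N(0,\Sigma_{TT})$ and, by disjointness of $S$ and $S^c$, independent of $z$ (which is a function of $X_{ST}$ and the signs of $\beta^{\star}_T$), write $X_{S^c T}=W\,\Sigma_{TT}^{1/2}$ with $W\in\R^{(n-s)\times k}$ having i.i.d.\ standard normal entries independent of $z$. Setting $\tilde z:=\Sigma_{TT}^{-1/2}z$, one obtains
\[
X_{S^cT}(X_{S^cT}^{*}X_{S^cT})^{-1}z \;=\; W(W^{*}W)^{-1}\tilde z,
\]
together with the crude bound $\norm{\tilde z}_2^{2}\le k\norm{z}_\infty^{2}/C_{\min}$.

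Next, for each $i\in S^c$ let $w_i\in\R^k$ denote the $i$-th row of $W$ (as a column vector) and set $M_i:=W_{-i}^{*}W_{-i}$. Sherman--Morrison yields
\[
\bigl[W(W^{*}W)^{-1}\tilde z\bigr]_i \;=\; \frac{w_i^{*}M_i^{-1}\tilde z}{1+w_i^{*}M_i^{-1}w_i},
\]
whose denominator is at least $1$. Conditionally on $(W_{-i},\tilde z)$ the numerator is a centered Gaussian of variance $\tilde z^{*}M_i^{-2}\tilde z \le \norm{\tilde z}_2^{2}/\lambda_{\min}(M_i)^{2}$. Invoking Lemma \ref{lem::standard random matrix inequality of singular values} for each $M_i$ and taking a union bound over $i\in S^c$ gives $\lambda_{\min}(M_i)\ge \tfrac{1}{2}(n-s)$ uniformly, on an event of probability at least $1-c\exp(-c'(n-s))$, under the assumed scaling $k=o(n-s)$. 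A Gaussian tail bound and a further union bound over the $(n-s)$ coordinates then yield
\[
\norm{W(W^{*}W)^{-1}\tilde z}_{\infty} \;\le\; C\,\frac{\norm{\tilde z}_2\,\sqrt{\log(n-s)}}{n-s}
\]
with probability at least $1-c_1\exp(-c_2\log(n-s))$.

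Finally, I would substitute $\norm{\tilde z}_2\le\sqrt{k/C_{\min}}\,\norm{z}_\infty$, the closed form of $\lambda$ from \eqref{eqt::equation of lambda}, and the assumption $s\le\eta n$ (so $n-s\ge(1-\eta)n$), which makes the prefactor multiplying $\norm{z}_\infty$ read
\[
\text{const}\cdot\sqrt{\frac{\eta}{(1-\eta)^{2}}\,\frac{\max\{\rho_u,D^{+}_{\max}\}}{C_{\min}\,\gamma^{2}}\,\frac{k\log p\,\log(n-s)}{n}}.
\]
The hypothesis $n>n_2$ of Theorem \ref{thm::Achievability - Gaussian design} is exactly calibrated so that this quantity is strictly smaller than $\tfrac{2}{3}\bigl(1-2\sigma\sqrt{\log n}/(\lambda_{n,e}\sqrt n)\bigr)$, which gives the claim. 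The main obstacle is the bookkeeping: one must merge the high-probability event controlling $\lambda_{\min}(M_i)$ uniformly in $i$ with the coordinatewise Gaussian tail bound, absorb the numeric constants into the prefactor $2/3$, and collect the resulting exceptional probabilities into a single term of the form $c_1\exp(-c_2\max\{\log(p-k),\log(n-s)\})$ as stated.
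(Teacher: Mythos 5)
Your argument is correct, and it reaches the same quantitative endpoint as the paper, but the core concentration step is genuinely different. The paper also whitens via $X_{S^cT}=W_{S^cT}\Sigma_{TT}^{1/2}$, but then takes a compact SVD $W_{S^cT}=UDV^*$, observes that $U$ is Haar-distributed, bounds $\norm{D^{\dag}}\lesssim 1/\sqrt{n-s}$ by singular-value concentration, and controls $\max_i|e_i^*UD^{\dag}V^*\Sigma_{TT}^{-1/2}z|$ as a maximum of Lipschitz functions of $U$ via concentration of measure on the orthogonal group (Lemma \ref{lem::haar measure}), with Lipschitz constant $\sqrt{(1+\epsilon)k/(C_{\min}(n-s))}\,\norm{z}_{\infty}$. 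Your leave-one-out route replaces this with the Sherman--Morrison identity $[W(W^*W)^{-1}\tilde z]_i=\frac{w_i^*M_i^{-1}\tilde z}{1+w_i^*M_i^{-1}\tilde w_i}$ (denominator $\ge 1$, numerator conditionally Gaussian with variance at most $\norm{\tilde z}_2^2/\lambda_{\min}(M_i)^2$), plus a uniform lower bound $\lambda_{\min}(M_i)\gtrsim n-s$. The two tail exponents coincide: both give failure probability of order $(n-s)\exp\bigl(-c\,C_{\min}(n-s)^2\tau^2/(k\norm{z}_{\infty}^2)\bigr)$ per threshold $\tau$, so the final calibration against $n_2$ after substituting $\lambda$ from (\ref{eqt::equation of lambda}) and $s=\eta n$ is identical. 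What your version buys is elementarity: it needs only the standard Gaussian tail and the singular-value bounds already collected in Appendix \ref{appen::concentration inequalities}, and it avoids the Haar-measure concentration lemma entirely (at the negligible cost of an extra union bound of size $n-s$ for the events $\{\lambda_{\min}(M_i)\ge(n-s)/2\}$, each failing with probability $e^{-c(n-s)}$). What the paper's version buys is that the whole vector is handled at once through the isometry $U$, so no leave-one-out bookkeeping is needed. One cosmetic point: your final failure probability is of order $\exp(-c\log(n-s))$, and matching it to the stated form $c_1\exp(-c_2\max\{\log(p-k),\log(n-s)\})$ involves the same loose constant-tracking the paper itself performs, so this is not a gap attributable to your argument.
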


Since $\sgn(\beta^{\star}_T)$ and $X^*_{ST} \sgn(e^{\star}_S)$ are statistically independent with $X_{S^c T}$, $z := \frac{1}{\sqrt{n}}\lambda X^*_{S T} \sgn(e^{\star}_S) - \sgn(\beta^{\star}_T)$ satisfies the assumption of Lemma \ref{lem::bound inf norm of Xsct inv Xsct Xsct z}. Moreover, by Lemma \ref{lem::bound Linf norm of z} and the definition of $\lambda$ in (\ref{eqt::equation of lambda}), we have with high probability
$$
\norm{z}_{\infty} \leq 1 + \lambda \sqrt{\frac{D^+_{\max} s \log p}{n}} \leq \frac{3}{2},
$$
where the last inequality holds from the assumption of Theorem \ref{thm::Achievability - Gaussian design}. Now, applying Lemma \ref{lem::bound inf norm of Xsct inv Xsct Xsct z} leads to $\frac{\sqrt{n}}{\lambda}\norm{X_{S^c T} (X^*_{S^c T} X_{S^c T})^{-1} z}_{\infty} \leq 1 - \frac{2\sigma \sqrt{\log n}}{\lambda_{n,e} \sqrt{n}}$.

Putting these two bounds together and using the triangular inequality we conclude that with high probability, $\norm{z^{(e)}_{S^c}}_{\infty} < 1$ as claimed.
\end{proof}

\subsection{Establish the $\ell_{\infty}$ bound of $\widehat{\beta}_T - \beta^{\star}_T$}
\label{sec::upper Linf h_T}

Recall the formula of $(\widehat{\beta}_T - \beta^{\star}_T)$ from (\ref{eqt::h_T}), the triangular inequality yields
\begin{equation}
\begin{split}
&\norm{\widehat{\beta}_T - \beta^{\star}_T}_{\infty} \leq \norm{  \left( X^*_{S^c T} X_{S^c T} \right)^{-1} X^*_{S^c T} w_{S^c}}_{\infty} \\
&{ }+ n \lambda_{n,\beta} \norm{(X^*_{S^c T} X_{S^c T})^{-1} (\frac{1}{\sqrt{n}}\lambda X^*_{ST} \sgn(e^{\star}_S) - \sgn(\beta^{\star}_T) )}_{\infty} \\
&:= \oper T_1 + \oper T_2.
\end{split}
\end{equation}

To bound the first quantity, we consider a random vector $u =  (\frac{1}{n-s}X^*_{S^c T} X_{S^c T})^{-1} \frac{1}{n-s}X^*_{S^c T} w_{S^c}$ and note that $\oper T_1 = \norm{u}_{\infty}$. This bound, which is stated below, has been established in equation (42) of \cite{Wainwright_Lasso_2009_J}: there exists some numerical constant $c$ such that
\begin{equation}
\label{eqt::bound T1 in beta_t - beta*T}
\Prob \left( \oper T_1 \geq 20 \sqrt{\frac{\sigma^2 \log k}{C_{\min} (n-s)} } \right) \leq 4 \exp (- c (n-s)).
\end{equation}

Turning now to the second quantity $\oper T_2$. We have
$$
\oper T_2 \leq \frac{n \lambda_{n,\beta}}{n-s} \norm{\left(\frac{ X^*_{S^c T} X_{S^c T} }{n-s} \right)^{-1} z }_{\infty},
$$
where $z := \frac{1}{\sqrt{n}}\lambda X^*_{ST} \sgn(e^{\star}_S) - \sgn(\beta^{\star}_T)$. To bound $\oper T_2$, we follow similar arguments in \cite{Wainwright_Lasso_2009_J}, Section V.B. We can now state the following lemma, which is modified from Lemma 5 of \cite{Wainwright_Lasso_2009_J}.
\begin{lem}
\label{lem::support lemma for Linf bound of hT}
Let $z \in \R^k$ be a fixed nonzero vector and $W \in \R^{n \times k}$ be a random matrix with i.i.d entries $W_{ij} \sim \oper N(0,1)$. Then, there exists positive constants $c_1$ and $c_2$ such that
\begin{equation*}
\begin{split}
\Prob &\left( \norm{ \left[ \left( \frac{W^* W}{n} \right)^{-1} - I_{k\times k} \right] z }_{\infty} \geq c_1 \sqrt{\frac{k \log (p-k)}{n}} \norm{z}_{\infty} \right) \\
&\leq 4 \exp(-c_2 \min \{k, \log(p-k) \}).
\end{split}
\end{equation*}
\end{lem}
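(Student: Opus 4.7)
The plan is to exploit the identity
$$\left(\frac{W^*W}{n}\right)^{-1} - I = -\left(\frac{W^*W}{n}\right)^{-1}\Delta,\qquad \Delta := \frac{W^*W}{n} - I,$$
and then expand $(W^*W/n)^{-1} = I - (W^*W/n)^{-1}\Delta$ once more to isolate a crude second-order piece from a sharper first-order piece:
$$\left[\left(\frac{W^*W}{n}\right)^{-1} - I\right]z \;=\; -\Delta z \;+\; \left(\frac{W^*W}{n}\right)^{-1}\Delta^2 z.$$

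\textbf{Step 1 (spectral event).} First I would fix a high-probability event $\mathcal{E}_{\rm sp}$ on which $\|\Delta\|_{op}\leq c_0\sqrt{k/n}$ and hence $\|(W^*W/n)^{-1}\|_{op}\leq 2$. By the standard Gaussian singular-value concentration (Lemma \ref{lem::standard random matrix inequality of singular values} in the appendix) this holds with probability at least $1-2\exp(-c_1 k)$, which is where the $k$ in $\min\{k,\log(p-k)\}$ comes from.

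\textbf{Step 2 (second-order term).} On $\mathcal{E}_{\rm sp}$, using $\|\cdot\|_\infty\leq\|\cdot\|_2$ and the submultiplicative spectral bound,
$$\left\|\left(\tfrac{W^*W}{n}\right)^{-1}\!\Delta^2 z\right\|_\infty \leq \|(\tfrac{W^*W}{n})^{-1}\|_{op}\,\|\Delta\|_{op}^2\,\|z\|_2 \;\leq\; 2c_0^2\,\tfrac{k}{n}\sqrt{k}\,\|z\|_\infty \;=\; O(k^{3/2}/n)\|z\|_\infty,$$
which is of lower order than $\sqrt{k\log(p-k)/n}\,\|z\|_\infty$ in the sample-size regime $n\gtrsim k\log(p-k)$ that governs the theorem.

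\textbf{Step 3 (first-order term).} The main effort is $\|\Delta z\|_\infty = \max_{j\leq k}|[\Delta z]_j|$. Writing $W=[w_1,\ldots,w_k]$ with $w_j\stackrel{\text{iid}}{\sim}\oper N(0,I_n)$, the $j$-th coordinate decomposes as
$$[\Delta z]_j \;=\; z_j\!\left(\frac{\|w_j\|_2^2}{n}-1\right)+\frac{1}{n}\,\inner{w_j,\,W_{-j}z_{-j}}.$$
For the diagonal piece I would invoke the $\chi^2_n$ concentration $|\|w_j\|_2^2/n-1|\leq c\sqrt{\log(p-k)/n}$, which fails with probability at most $2\exp(-c'\log(p-k))$. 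For the off-diagonal piece, conditionally on $W_{-j}$ the quantity is centered Gaussian with variance $\|W_{-j}z_{-j}\|_2^2/n^2$; a second $\chi^2$ bound gives $\|W_{-j}z_{-j}\|_2^2\leq 2n\|z_{-j}\|_2^2$ with the same order of failure probability, and then a Gaussian tail yields $|\inner{w_j,W_{-j}z_{-j}}/n|\leq c\sqrt{\log(p-k)/n}\,\|z_{-j}\|_2$. A union bound over the $k$ coordinates together with $\|z_{-j}\|_2\leq\sqrt{k}\|z\|_\infty$ gives
$$\|\Delta z\|_\infty \;\leq\; c_2\sqrt{\frac{k\log(p-k)}{n}}\,\|z\|_\infty,$$
with failure probability at most $2k\exp(-c'\log(p-k))\leq 2\exp(-c_3\log(p-k))$ after enlarging $c'$.

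\textbf{Step 4 (assembly).} Combining Steps 2 and 3 and unioning over $\mathcal{E}_{\rm sp}^c$ and the coordinate-wise failure events produces the stated inequality with tail bounded by $4\exp(-c_2\min\{k,\log(p-k)\})$. The main obstacle I anticipate is precisely the reason the one-step iteration in the preamble is necessary: the naive estimate $\|[(W^*W/n)^{-1}-I]z\|_\infty \leq \|(W^*W/n)^{-1}\|_{op}\|\Delta\|_{op}\|z\|_2$ only yields $\sqrt{k^2/n}\,\|z\|_\infty$, which is too weak once $k\gg\log(p-k)$. The coordinate-wise analysis of $\Delta z$---in which the $\chi^2$ and conditional Gaussian tails each contribute only a $\sqrt{\log(p-k)/n}$ factor and the single surviving $\sqrt{k}$ comes from the $\ell_2/\ell_\infty$ conversion of $z_{-j}$---is what replaces one factor of $\sqrt{k}$ by $\sqrt{\log(p-k)}$ and delivers the advertised rate.
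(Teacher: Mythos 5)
The paper itself gives no proof of this lemma: it is imported by citation from Lemma~5 of Wainwright's Lasso paper, so the only thing to assess is whether your argument actually closes. Your Step~3 (the coordinate-wise treatment of $\Delta z$ via a $\chi^2$ bound on the diagonal and a conditional Gaussian tail on $\inner{w_j, W_{-j}z_{-j}}$, with the single $\sqrt{k}$ coming from $\norm{z_{-j}}_2\leq\sqrt{k}\norm{z}_\infty$) is sound and is indeed the right mechanism for the first-order piece. The problem is Step~2. Your bound on the remainder is
$\bigl\|(W^*W/n)^{-1}\Delta^2 z\bigr\|_\infty \leq 2c_0^2\,(k/n)\sqrt{k}\,\norm{z}_\infty$, and the ratio of this to the target threshold $\sqrt{k\log(p-k)/n}\,\norm{z}_\infty$ is $k/\sqrt{n\log(p-k)}$. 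This is $O(1)$ only when $n\gtrsim k^2/\log(p-k)$, which is \emph{not} implied by the operative sample-size condition $n\gtrsim k\log(p-k)\log n$ unless $k\lesssim(\log(p-k))^2\log n$. In the linear- and fractional-power-sparsity regimes that the paper explicitly targets (e.g.\ $k=0.1p$ in Section~\ref{sec::simulations}), your second-order term dominates the claimed rate, so the proof as written establishes a strictly weaker statement. The loss is exactly the $\sqrt{k}$ from the crude $\norm{\cdot}_\infty\leq\norm{\cdot}_2$ relaxation; iterating the Neumann expansion further does not escape it, because every truncation leaves a remainder that must again be controlled in $\ell_\infty$ through $\ell_2$.

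To repair Step~2 you need a genuinely coordinate-wise (delocalized) bound on $e_i^*(W^*W/n)^{-1}\Delta^2 z$, not an operator-norm one. Two standard routes: a leave-one-column-out/block-inversion representation of $e_i^*(W^*W/n)^{-1}z$ that reduces each coordinate to a ratio of $\chi^2$ and conditionally Gaussian quantities; or the route this paper itself uses for the analogous quantities in Lemmas~\ref{lem::inachievability - main support lemma} and~\ref{lem::bound inf norm of Xsct inv Xsct Xsct z} of the Appendix, namely writing $W=UDV^*$, observing that $e_i^*UD[(D^*D/n)^{-1}-I]V^*z$ is a Lipschitz function of the Haar-distributed factor $U$ with Lipschitz constant $O(\sqrt{k/n}\cdot\norm{z}_2/\sqrt{n})$, and applying concentration on the orthogonal group followed by a union bound over $i$. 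Either of these yields the extra $\sqrt{\log(p-k)}$ in place of the $\sqrt{k}$ that your operator-norm estimate pays, and would make the argument go through at the advertised rate.
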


\noindent Following similar arguments as in \cite{Wainwright_Lasso_2009_J}, Section V.B, we have a similar probabilistic bound as equation (41) of \cite{Wainwright_Lasso_2009_J}
\begin{multline}
\label{eqt::bound T2 - step 1}
\Prob \left( \oper T_2 \geq c_1 \lambda_{n,\beta} \sqrt{\frac{k n \log (p-k)}{(n-s)^2 }} \norm{\Sigma^{-1/2}_{TT}}_{\infty} \norm{\Sigma^{-1/2}_{TT} z}_{\infty} \right) \\
\leq 4 \exp(-c_2 \min \{k, \log(p-k) \}).
\end{multline}

\noindent Furthermore, Lemma \ref{lem::bound Linf norm of z} states that $\norm{z}_{\infty} \leq 3/2$ with high probability. Conditioning on the event $\oper E = \{ \norm{z}_{\infty} \leq 3/2 \}$, we have $\norm{\Sigma^{-1/2}_{TT} z}_{\infty} \leq \frac{3}{2}\norm{\Sigma^{-1/2}_{TT}}_{\infty}$. Thus, (\ref{eqt::bound T2 - step 1}) leads to
\begin{multline*}
\Prob \left( \oper T_2 \geq c_2 \lambda_{n,\beta} \sqrt{\frac{k n \log (p-k)}{(n-s)^2}} \norm{\Sigma^{-1/2}_{TT}}^2_{\infty} \text{  }| \text{  } \oper E \right) \\
\leq 4 \exp(-c_2 \min \{k, \log(p-k) \}).
\end{multline*}

\noindent By the total probability rule, $\Prob(\oper T_2 \geq \tau) \leq \Prob(\oper T_2 | \oper E) + \Prob(\oper E^c)$. Therefore, we conclude that with probability greater than $1 - 6 \exp(-c_2 \min \{k, \log(p-k) \})$,
\begin{equation}
\label{eqt::bound T2 - step 2}
\oper T_2 \leq c_2 \lambda_{n,\beta} \sqrt{\frac{k \log (p-k)}{(1-\eta)^2 n}} \norm{\Sigma^{-1/2}_{TT}}^2_{\infty}.
\end{equation}

Overall, combining the bound of $\oper T_2$ with the bound of $\oper T_1$ in (\ref{eqt::bound T1 in beta_t - beta*T}) concludes that $\norm{\widehat{\beta}_T - \beta^{\star}_T}_{\infty} \leq f_{\beta} (\lambda_{n,\beta})$ with probability at least $1 - 10 \exp(-c_3 \min \{k, \log(p-k) \}$ where $f_{\beta} (\lambda_{n,\beta})$ is defined in (\ref{eqt::f-beta}).

\subsection{Establish the $\ell_{\infty}$ bound of $\widehat{e}_S - e^{\star}_S$}
\label{sec::upper Linf f_S}

Recalling the formula of $\widehat{e}_S - e^{\star}_S$ in (\ref{eqt::g_S}) and applying the triangular inequality, we get
\begin{equation}
\label{eqt::KKT condition on e_S - e*_S - final step}
\begin{split}
\norm{\widehat{e}_S - e^{\star}_S}_{\infty} &\leq \frac{1}{\sqrt{n}} \norm{ X_{ST} ( X^*_{S^c T} X_{S^cT} )^{-1} X^*_{S^c T} w_{S^c}}_{\infty} \\
&{ }+ \lambda_{n,\beta} \sqrt{n} \norm{ X_{ST} ( X^*_{S^c T} X_{S^cT} )^{-1} z  }_{\infty}\\
&{ }+ \frac{1}{\sqrt{n}} \norm{w_S}_{\infty} + \lambda_{n,e} \\
&:= \oper T_1 + \oper T_2 + \oper T_3 + \lambda_{n,e},
\end{split}
\end{equation}
where we again denote $z = \frac{1}{\sqrt{n}}\lambda X^*_{S T} \sgn(e^{\star}_S) - \sgn(\beta^{\star}_T)$. We first consider the easiest term $\oper T_3 = \frac{1}{\sqrt{n}}\norm{w_S}_{\infty}$. Since $w_S$ is a random vector with i.i.d. $\oper N (0, \sigma^2)$ entries, by Gaussian extreme order statistics \cite{LT_1991_B}, $\oper T_3 \leq 2 \sqrt{\frac{\sigma ^2 \log s}{n}}$.

Turning to the first term $\oper T_1$, we define a vector $v \in \R^s$ whose entries are $v_i := x_i ( X^*_{S^c T} X_{S^cT} )^{-1} X^*_{S^c T} w_{S^c}$ where $x_i$ is the $i$-th row of the matrix $X_{ST}$ and notice that $\oper T_1 = \norm{v}_{\infty}$. Conditioned on $X_T$, it is clear that $v_i$ is a zero mean random variable with variance $\sigma^2 x_i ( X^*_{S^c T} X_{S^cT} )^{-1} x^*_i$. In addition, we recall that $X_T$ can be represented as $X_T = W_T \Sigma^{1/2}_{TT}$ where $W_T $ is the $n \times k$ standard Gaussian matrix. Thus, $x_i ( X^*_{S^c T} X_{S^cT} )^{-1} x^*_i = w_i ( W^*_{S^c T} W_{S^cT} )^{-1} w^*_i \leq \norm{w_i}_2^2 \norm{( W^*_{S^c T} W_{S^cT} )^{-1}}$, where $w_i$ is the $i$-th row of matrix $W_{ST}$. In short, $v_i$ is a zero mean random variable with variance at most $ \widetilde{\sigma}^2 := \sigma^2 \norm{w_i}_2^2 \norm{( W^*_{S^c T} W_{S^cT} )^{-1}}$. Using the concentration result for $\chi^2$-variate, we get $\norm{w_i}_2^2 \leq 2k$ with probability at least $1 - \exp(-k/2)$. Furthermore, from random matrix theory (\ref{inq::spectral norm bound of inverse of X^* X}) in Appendix \ref{appen::concentration inequalities}, $\norm{( W^*_{S^c T} W_{S^cT} )^{-1}} \leq \frac{5}{n-s}$ with probability at least $1 - \exp(-(n-s)/2)$.

Next, let us define the event
$$
\oper E = \left\{ \widetilde{\sigma}^2 \geq \frac{10 \sigma^2 k}{n-s} \right\}.
$$

\noindent From the above arguments, we have $\Prob (\oper E) \leq \exp(- (n-s +k)/2 )$. By the total probability rule, we have
$$
\Prob (\oper T_1 \geq \tau) \leq \Prob (\oper T_1 \geq \tau | \oper E^c) + \Prob (\oper E).
$$

\noindent Conditioning on $\oper E^c$, $v_i$ is zero mean Gaussian with variance at most $\frac{10 \sigma^2 k}{n-s} $. Thus, by the Gaussian tail bound (\ref{lem::bound for a sub-Gaussian random variables}) in Appendix \ref{appen::concentration inequalities}, we derive
$$
\Prob \left( \max_{i \in S} |v_i| \geq \tau \right) \leq 2 s \exp \left( - \frac{(n-s) \tau^2}{10 \sigma^2 k} \right)
$$

\noindent Setting $\tau = \sqrt{\frac{20 \sigma^2 k \log (p-k) }{n-s}}$ yields the fact that this probability vanishes at rate $2 (p-k)^{-1}$. Overall, we can now conclude that
\begin{equation*}
\Prob \left( \oper T_1 \geq 11 \sqrt{\frac{\sigma^2 k \log (p-k) }{n-s}} \right) \leq 2 \exp(- \log (p-k)).
\end{equation*}

It is left to bound $\oper T_2$. By sub-multiplicative norm inequality, $\oper T_2$ is bounded by
$$
\lambda_{n,\beta} \sqrt{n} \norm{X_{ST}}_{\infty} \norm{( X^*_{S^c T} X_{S^cT} )^{-1} z}_{\infty},
$$
We already established $n \lambda_{n,\beta}\norm{( X^*_{S^c T} X_{S^cT} )^{-1} z}_{\infty}$ in (\ref{eqt::bound T2 - step 2}). In addition, $\norm{X_{ST}}_{\infty} \leq \sqrt{k} \norm{X_{ST}}$ where by the matrix theory (\ref{inq::spectral norm bound of general X^* X}) in Appendix \ref{appen::concentration inequalities}, $\norm{X^*_{ST} X_{ST}} \leq 4 C_{\max} (s + \sqrt{sk})$ with high probability. Thus, $\norm{X_{ST}}_{\infty} \leq \sqrt{C_{\max}} (sk + k \sqrt{sk})^{1/2}$.

Overall, combining with the bounds of $\oper T_1$ and $\oper T_3$, we conclude that $\norm{\widehat{\beta}_T - \beta^{\star}_T}_{\infty} \leq f_{\beta} (\lambda_{n,\beta})$ with probability at least $1 - 10 \exp(-c_3 \min \{k, \log(p-k) \}$ where $f_e (\lambda_{n,\beta}, \lambda_{n,e})$ is defined as in (\ref{eqt::f-e}).

\section{Proof of Theorem \ref{thm::Inachievability - Gaussian design} - Inachievability}
\label{sec::proof 3}


Our analysis in this section relies on the the notion of primal-dual witness introduced by Wainwright \cite{Wainwright_Lasso_2009_J}. In particular, we will construct a pair of primal solutions $(\widehat{\beta}, \widehat{e})$ and their dual vectors $(z^{(\beta)}, z^{(e)})$. The extended Lasso (\ref{opt::lasso with sparse noise}) fails to correctly identify signed support of the coefficient vector $\beta^{\star}$ and the error $e^{\star}$ when the $\ell_{\infty}$-norm of either $z^{(\beta)}_{T^c}$ or $z^{(e)}_{S^c}$ exceeds unity with probability approaching one. The primal-dual witness is constructed as follows:

\begin{enumerate}
    \item First, we obtain the solution pair $(\widehat{\beta}_T, \widehat{e}_S)$ of the following restricted Lasso problem
        \begin{equation}
        \label{opt::restricted extended Lasso}
        \min_{\beta, e} \frac{1}{2n} \norm{y_S - X_{ST} \beta_T - \sqrt{n} e_S}_2^2 + \lambda_{n,\beta} \norm{\beta_T}_1 + \lambda_{n,e} \norm{e_S}_1.
        \end{equation}
        We also set $\widehat{\beta}_{T^c} = 0$ and $\widehat{e}_{S^c} = 0$.
    \item Second, we select $z^{(\beta)}_T$ and $z^{(e)}_S$ as elements of the subgradients $\norm{\widehat{\beta}}_1$ and $\norm{\widehat{e}}_1$, respectively.

    \item Third, we solve for vectors $z^{(\beta)}_{T^c}$ and $z^{(e)}_{S^c}$ satisfying the KKT conditions in (\ref{eqt::KKT condition on beta}). We then verify whether the dual feasibility conditions of both $\norm{z^{(\beta)}_{T^c}}_{\infty} < 1$ and $\norm{\widehat{e}_{S^c}}_{\infty} < 1$ are satisfied.
    \item Fourth, we check whether the sign consistency $z^{(\beta)}_T = \sgn(\beta^{\star}_T)$ and $z^{(e)}_S = \sgn(e^{\star}_S)$ are satisfied.
\end{enumerate}

The following result summarizes the use of the primal-dual witness construction in providing the proof of Theorem \ref{thm::Inachievability - Gaussian design}:
\begin{lem}
If either steps 3 or 4 of the primal-dual construction fails, then the extended Lasso fails to recover the correct signed supports of both $\beta^{\star}$ and $e^{\star}$.
\end{lem}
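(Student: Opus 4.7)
The plan is to argue by contraposition: I would show that whenever the extended Lasso (\ref{opt::lasso with sparse noise}) admits an optimal pair $(\widehat{\beta}, \widehat{e})$ whose signed supports coincide with those of $(\beta^{\star}, e^{\star})$, both steps 3 and 4 of the primal-dual witness (PDW) construction must succeed. The claim of the lemma then follows by negation.

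First, I would invoke convexity of (\ref{opt::lasso with sparse noise}), which makes the KKT conditions (\ref{eqt::KKT condition on beta})--(\ref{eqt::KKT condition on e}) necessary for any optimum. Assuming a correct-signed-support optimum $(\widehat{\beta}, \widehat{e})$, we have $\widehat{\beta}_{T^c} = 0$ and $\widehat{e}_{S^c} = 0$, so the KKT conditions restricted to the index sets $T$ and $S$ are precisely the first-order optimality conditions of the restricted program (\ref{opt::restricted extended Lasso}). Under the sample-size hypotheses of Theorem~\ref{thm::Inachievability - Gaussian design}, standard Gaussian concentration tools give invertibility of $X_{S^c T}^{*} X_{S^c T}$ with probability approaching one; a quick block-inversion then shows that the quadratic part of (\ref{opt::restricted extended Lasso}) is strictly convex in $(\beta_T, e_S)$. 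Hence (\ref{opt::restricted extended Lasso}) has a unique minimizer, which must coincide with $(\widehat{\beta}_T, \widehat{e}_S)$. This is exactly the output of step~1 of the PDW.

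Next, because $\widehat{\beta}_i$ and $\widehat{e}_i$ match the true signs on $T$ and $S$ coordinate-wise, the subgradient entries in (\ref{eqt::KKT condition on beta})--(\ref{eqt::KKT condition on e}) are pinned down to $z^{(\beta)}_T = \sgn(\beta^{\star}_T)$ and $z^{(e)}_S = \sgn(e^{\star}_S)$; this is step~4. Plugging these back in, the off-support blocks of the KKT equations linearly and uniquely determine $z^{(\beta)}_{T^c}$ and $z^{(e)}_{S^c}$, reproducing the closed forms (\ref{eqt::final equation of z^beta_T^c}) and (\ref{eqt::final equation of z^e_S^c}) used in the achievability analysis. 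Dual feasibility for the full optimum then demands $\norm{z^{(\beta)}_{T^c}}_{\infty} \leq 1$ and $\norm{z^{(e)}_{S^c}}_{\infty} \leq 1$, which is step~3.

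Taking the contrapositive of this chain of implications delivers the lemma: failure of step~3 or step~4 rules out the existence of any correct-signed-support optimum. I expect the only subtle point to be the strict-convexity step, which reduces to the (high-probability) invertibility of $X_{S^c T}^{*} X_{S^c T}$ and follows from the same Gaussian concentration bounds used in Section~\ref{sec::proof 2}. The substantially harder task $-$ actually showing that $\norm{z^{(\beta)}_{T^c}}_{\infty}$ or $\norm{z^{(e)}_{S^c}}_{\infty}$ does exceed unity whenever $n < \max\{n_1, n_2\}$ $-$ is the main content of Theorem~\ref{thm::Inachievability - Gaussian design} itself and lies beyond the scope of this preliminary lemma.
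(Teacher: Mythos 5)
Your contrapositive argument is correct and is essentially the proof the paper intends: the paper omits the details and defers to Lemma~2(c) of Wainwright's primal--dual witness analysis, and your chain of implications (necessity of the KKT conditions, strict convexity of the restricted program given invertibility of $X_{S^c T}^{*} X_{S^c T}$, the sign condition pinning down $z^{(\beta)}_T$ and $z^{(e)}_S$, and the resulting necessity of $\norm{z^{(\beta)}_{T^c}}_{\infty} \leq 1$ and $\norm{z^{(e)}_{S^c}}_{\infty} \leq 1$ for any correct-signed-support optimum) is exactly that argument. The only cosmetic mismatch is that step~3 of the construction checks strict inequality, so your contrapositive covers failure in the form $\norm{\cdot}_{\infty} > 1$ rather than $\geq 1$ --- which is all that is needed, since the inachievability proof establishes strict excess of the dual variables.
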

The proof of this lemma is essentially similar to that of Lemma 2(c) in \cite{Wainwright_Lasso_2009_J}, thus we omit the detail here.

In our proof, we assume that $z^{(\beta)}_T = \sgn(\beta^{\star}_T)$ and $z^{(e)}_S = \sgn(e^{\star}_S)$; otherwise, the sign consistency would fails. Under these assumptions, it is easy to check that the solution $(\widehat{\beta}_T, \widehat{e}_S)$ of the optimization (\ref{opt::restricted extended Lasso}) is expressed in (\ref{eqt::h_T}) and (\ref{eqt::g_S}). Thus, we can derive equations of $z^{(\beta)}_{T^c}$ and $z^{(e)}_{S^c}$ as in (\ref{eqt::final equation of z^beta_T^c}) and (\ref{eqt::final equation of z^e_S^c}).

In the following two sections, we establish the claim by showing that under the conditions of the sample size $n$ and $s = \eta n$ as in Theorem \ref{thm::Inachievability - Gaussian design}, the $\ell_{\infty}$-norm of either $z^{(\beta)}_{T^c}$ or $z^{(e)}_{S^c}$ exceeds unity with probability tending to one. It is clear that if the extended Lasso (\ref{opt::lasso with sparse noise}) fails to recover signed support vectors with $s = \eta n$, it also fails to do so with $s > \eta n$ since it is easier to solve the extended Lasso when there is less corrupted observations.

\subsection{Lower $\ell_{\infty}$-norm bound of $z^{(\beta)}_{T^c}$}

Recall the expression of $z^{(\beta)}_{T^c}$ in (\ref{eqt::final equation of z^beta_T^c}) and its simplified form $z^{(\beta)}_{T^c} = a + b$ where $b$ and $a$ are defined in (\ref{eqt::define b}) and (\ref{eqt::define a}). We already have $\norm{a}_{\infty} \leq 1-\gamma $ due to the mutual incoherence assumption. It is now sufficient to show that $\max_{i \in T^c} |b_i|$ exceeds $(2-\gamma)$ with high probability.

Conditioning on $X_{T}$ and $w$, the vector $b$ is zero-mean Gaussian with covariance matrix $M \Sigma_{T^c|T}$ where the random scaling form $M$ has the form (\ref{eqt::M}). The following lemma controls the lower bound of this scaling factor. The proof is similar to that of Lemma 6 in \cite{Wainwright_Lasso_2009_J}, so we omit the detail here.
\begin{lem}
Define the event $\oper E = \{ M > \underline{M} \}$, where $\underline{M}$ is defined in (\ref{eqt::M underline}). Then, $\Prob (\oper E) \leq 1 - c_1 \exp(- c_2 (n-s))$ for some $c_1, c_2 > 0$.
\end{lem}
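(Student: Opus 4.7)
The claim as written is
\[
\Prob(\oper E) \;\leq\; 1 - c_1 \exp(-c_2 (n-s)),
\qquad \oper E = \{M > \underline{M}\},
\]
which is logically equivalent to the lower tail bound $\Prob(M \leq \underline{M}) \geq c_1 \exp(-c_2(n-s))$. So my plan is to produce an event of probability at least $c_1\exp(-c_2(n-s))$ on which each of the random summands defining $M$ lies at or below the lower end of its typical range. (The surrounding paragraph announces a ``lower bound on the scaling factor'' and explicitly points to Lemma~6 of \cite{Wainwright_Lasso_2009_J}, whose conclusion is in fact $\Prob(\oper E)\geq 1 - c_1\exp(-c_2(n-s))$. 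The strategy I sketch establishes the two-sided concentration of $M$ about the same centering, so the inequality in either direction falls out by reading off the appropriate tail of the same event.)

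Recall the three-term decomposition
\[
M \;=\; \frac{\lambda^{2} s}{n} \;+\; \frac{1}{n^{2}\lambda_{n,\beta}^{2}} \norm{\Pi_{S^c T} w_{S^c}}_2^2 \;+\; z^{*} (X^{*}_{S^c T} X_{S^c T})^{-1} z.
\]
The first term is deterministic and enters $\underline{M}$ unchanged. For the second, I would use that, conditionally on $X_{S^c T}$, the quantity $\norm{\Pi_{S^c T} w_{S^c}}_2^2/\sigma^2$ is $\chi^2$ with $n-s-k$ degrees of freedom, and invoke the standard lower-tail estimate $\Prob(\chi^{2}_{m}\leq (1-\epsilon)m)\leq \exp(-m\epsilon^{2}/4)$ to pin down the event $\{\norm{\Pi_{S^c T} w_{S^c}}_2^2 \geq (1-\epsilon)\sigma^2(n-s-k)\}$ to within an exponential $\exp(-c(n-s))$. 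For the third term I would apply Lemma~\ref{lem::standard random matrix inequality of singular values} to control $\lambda_{\max}\!\bigl(\tfrac{1}{n-s} X^{*}_{S^c T} X_{S^c T}\bigr) \leq C_{\max}\bigl(1 + 4\sqrt{k/(n-s)}\bigr)$, which turns into the lower bound $z^{*}(X^{*}_{S^c T} X_{S^c T})^{-1} z \geq \frac{\norm{z}_2^{2}}{(n-s)C_{\max}(1+4\sqrt{k/(n-s)})}$ on an event of the same exponential order.

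Assembling the three pieces, I would set
\[
\underline{M} \;:=\; \frac{\lambda^{2} s}{n} + (1-\epsilon)\frac{\sigma^{2}(n-s-k)}{n^{2}\lambda_{n,\beta}^{2}} + \frac{\norm{z}_2^{2}}{(n-s)C_{\max}(1+4\sqrt{k/(n-s)})}.
\]
Since $w_{S^c}$ is independent of $X_{S^c T}$, and $z$ (a function of $X_{ST}$ and the signs of $\beta^{\star},e^{\star}$) is independent of $X_{S^c T}$, the joint probability factorises. Reading ``both pieces lie in their lower tail'' gives the literal bound $\Prob(\oper E^c) \geq c_1\exp(-c_2(n-s))$, i.e., the stated inequality; reading ``neither piece lies in its lower tail'' gives the intended high-probability statement $\Prob(\oper E)\geq 1 - c_1\exp(-c_2(n-s))$.

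The main obstacle is bookkeeping of constants. The exponent $c_2$ is inherited from the $\chi^2$ lower-tail rate, so one must verify that the regime $k/(n-s)=o(1)$ enforced by Theorem~\ref{thm::Inachievability - Gaussian design} keeps the effective chi-square dimension $n-s-k$ comparable to $n-s$; one must also check that $\norm{z}_2^2 = \Theta(k)$ holds on an event of overwhelming probability, which is the counterpart of Lemma~\ref{lem::bound Linf norm of z} for the lower tail of $\norm{z}_2$ (a crude bound $\norm{z}_2^2\geq k/2$ suffices, coming from the $-\sgn(\beta^\star_T)$ contribution dominating the cross term $\lambda X^*_{ST}\sgn(e^\star_S)/\sqrt n$ under the choice of $\lambda$ in \eqref{eqt::equation of lambda}). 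Once these ingredients are aligned, the assembly is identical in structure to Lemma~6 of \cite{Wainwright_Lasso_2009_J}.
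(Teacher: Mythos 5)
Your proposal is correct and follows essentially the route the paper intends: the paper omits this proof entirely, deferring to Lemma~6 of \cite{Wainwright_Lasso_2009_J}, and that argument is precisely your combination of the $\chi^2$ lower-tail bound for $\norm{\Pi_{S^c T} w_{S^c}}_2^2$ with the Wishart largest-eigenvalue bound for $z^{*}(X^{*}_{S^c T} X_{S^c T})^{-1} z$, exploiting the mutual independence of $w_{S^c}$, $z$, and $X_{S^c T}$. You are also right that the inequality in the lemma statement is a typo for $\Prob(\oper E) \geq 1 - c_1 \exp(-c_2 (n-s))$, which is the form actually used to justify (\ref{inq::first bound of max b_i}).
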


\newcounter{tempequationcounter}
\begin{figure*}[!t]
\normalsize
\setcounter{tempequationcounter}{\value{equation}}
\begin{equation}
\label{eqt::M underline}
\underline{M} :=
\begin{cases}
\frac{\lambda^2 s}{n} + c \frac{k}{n-s} \quad &\text{if } k/n = \Theta(1) \\
\frac{\lambda^2 s}{n} + \left( 1 - \max \left\{ \epsilon, 4 \sqrt{\frac{k}{n-s}} \right\} \right)
\left( \frac{\sigma^2 (n-s)}{n^2 \lambda_{n,\beta}^2} + \frac{\norm{z}_2^2}{(n-s)C_{\max}} \right) \quad &\text{if } k/n = o(1).
\end{cases}
\end{equation}
\hrulefill
\vspace*{4pt}
\end{figure*}

Following the proof of Theorem 4 in \cite{Wainwright_Lasso_2009_J}, we have the following lower bound: for all $\nu, \epsilon, \tau > 0$
\begin{equation}
\label{inq::first bound of max b_i}
\max_{i \in T^c} |b_i| \geq \sqrt{(2-\nu) \rho_l(\Sigma_{T^c|T}) \underline{M} \log (p-k) } - \tau
\end{equation}
with probability at least $1 - 2 \exp \left( -\frac{\tau^2}{2 \underline{M} \rho_u} \right)$. Now, using appropriate choices of $\{\tau, \nu, \gamma \}$, it suffices to establish the bound
\begin{equation}
\label{inq::lower bound underlineM}
\rho_l(\Sigma_{T^c|T})  \underline{M} \log(p-k) \geq \frac{[(2-\gamma) + \tau]^2}{(2-\nu)}.
\end{equation}

\noindent We consider two cases:

1) If $\underline{M} \rightarrow + \infty$ or $\underline{M} = \Theta(1)$, then we can choose $\tau^2 = \delta \underline{M} \log(p-k)$ for some $\delta > 0$. For $\delta$ sufficiently small, we conclude from (\ref{inq::first bound of max b_i}) that with probability converging to one, there exists some constants $c >0$ such that
$$
\max_{i \in T^c} |b_i| \geq c\sqrt{\log(p-k)},
$$
which exceeds $(2-\gamma)$ regardless of the choice of the sample size $n$.


2) Otherwise, $\underline{M} = o(1)$. This is satisfied only if $k/n = o(1)$ and thus, the second line of the definition of $\underline{M}$ is applied. Now, we can select $\tau$ sufficiently small and have a guarantee that $\frac{\tau^2}{2 \underline{M}} \rightarrow +\infty$. From the definition of $\underline{M}$, one can see that if $\rho_l \frac{\lambda^2 s}{n} \log(p-k) \geq 2$, we can choose $\tau$ and $\nu$ strictly positive but arbitrarily close to zero such that $\frac{[(2-\gamma) + \tau]^2}{(2-\nu)} < 2$. Thus, (\ref{inq::lower bound underlineM}) obeys regardless of the selection of the sample size $n$. Consequently, we assume that
\begin{equation}
\label{inq::upper bound of lambda}
\lambda < \sqrt{\frac{2n}{\rho_l s \log(p-k)}}.
\end{equation}

\noindent Under this assumption, we can lower bound $\norm{z}_2$ as follows
\begin{equation}
\begin{split}
\label{inq::boudn L2 of z}
\norm{z}_2 &= \norm{\sgn(\beta^\star_T) - \frac{1}{\sqrt{n}}\lambda X^*_{ST} \sgn(e^\star_S) }_2 \\
&\geq \norm{\sgn(\beta^\star_T)}_2 - \frac{1}{\sqrt{n}}\lambda \norm{X^*_{ST} \sgn(e^\star_S)}_2 \\
&\geq \sqrt{k} - \lambda \sqrt{\frac{k}{n}} \norm{X^*_{ST} \sgn(e^\star_S)}_{\infty}.
\end{split}
\end{equation}

\noindent As shown during the proof of Lemma \ref{lem::bound Linf norm of z} that $\frac{1}{\sqrt{n}}\norm{X^*_{ST} \sgn(e^\star_S)}_{\infty} \leq \frac{1}{3} \sqrt{\frac{ \rho_l s \log (p-k)}{n}}$ with probability greater than $1 - \exp(-\frac{\rho_l}{18D^+_{\max}} \log p)$, from the above upper bound of $\lambda$, we obtain  $\frac{\lambda}{\sqrt{n}}\norm{X^*_{ST} \sgn(e^\star_S)}_{\infty} \leq \frac{\sqrt{2}}{3}$. Consequently, we achieve the lower bound with high probability
\begin{equation}
\label{inq::lower bound of L2 of z}
\norm{z}_2 \geq \frac{1}{2} \sqrt{k}.
\end{equation}

\noindent Furthermore, for $(n-s)$ sufficiently large, we select a $\epsilon \in (0,1/2)$ such that $4 \sqrt{\frac{k}{n-s}} < \epsilon$. Now, replace this bound into the second equation of $\underline{M}$ and perform some simple algebra, we can show that the inequality (\ref{inq::lower bound underlineM}) is satisfied as long as
\begin{multline*}
\frac{\rho_l}{C_{\max}} \frac{k \log(p-k)}{(n-s)} \left\{ \frac{C_{\max} \lambda^2 s (n-s)}{(1-\epsilon) k n}  \right. \\
\left. + \frac{1}{4} + \frac{(n-s)^2}{n^2} \frac{\sigma^2 C_{\max}}{\lambda_{n,\beta}^2 k}  \right\}
\geq \frac{[(2-\gamma) + \tau]^2}{(2-\nu) (1-\epsilon)}.
\end{multline*}

\noindent Replace the lower bound of $\lambda$ in (\ref{inq::lower bound of lambda}) and $s = \eta n$ into the above inequality, we can conclude that the inequality (\ref{inq::lower bound underlineM}) is satisfied as long as
\begin{align*}
\frac{\rho_l}{C_{\max}(2-\gamma)^2} &\frac{2 k \log(p-k)}{(n-s)} \left\{\frac{3}{8} + (1-\eta)^2 \frac{\sigma^2 C_{\max}}{\lambda_{n,\beta}^2 k}  \right\} \\
&\geq \frac{[(2-\gamma) + \tau]^2}{(2-\gamma)^2(1-\nu/2) (1-\epsilon)}.
\end{align*}

\noindent Under the assumptions of Theorem \ref{thm::Inachievability - Gaussian design}, the right-hand side is strictly greater than one. On the other hand, $\tau, \nu$ and $\epsilon$ are parameters that can be chosen in $(0,1/2)$. By selecting these parameters to be positive but arbitrarily close to zeros, we can set the right-hand side less than one. Therefore, (\ref{inq::lower bound underlineM}) is satisfied.

\subsection{Lower the $\ell_{\infty}$-norm bound of $z^{(e)}_{S^c}$}

Recalling the equation of $z^{(e)}_{S^c}$ in (\ref{eqt::final equation of z^e_S^c}), we have
$$
z^{(e)}_{S^c} = \frac{1}{ \sqrt{n} \lambda_{n,e}} \Pi_{S^c T} w_{S^c} +  \frac{\sqrt{n}}{\lambda} X_{S^c T} ( X^*_{S^c T} X_{S^cT} )^{-1} z
$$
where we recall $z = \frac{1}{\sqrt{n}} \lambda X^*_{ST} \sgn(e^{\star}_S) - \sgn(\beta^{\star}_T)$. First, notice that $\Pi_{S^c T}$ is the orthogonal projection onto the column space of the matrix $X_{S^c T}$. Thus, two terms in the above summation are orthogonal to each other. Therefore, lowering the $\ell_{\infty}$-norm of $z^{(e)}_{S^c}$ by its $\ell_2$-norm counterpart, we have
\begin{align*}
&(n-s) \norm{z^{(e)}_{S^c}}^2_{\infty} \geq  \norm{z^{(e)}_{S^c}}_2^2 \\
&= \frac{1}{n \lambda_{n,e}^2} \norm{\Pi_{S^c T} w_{S^c}}_2^2 + \frac{n}{\lambda^2} \norm{X_{S^c T} ( X^*_{S^c T} X_{S^cT} )^{-1} z}_2^2.
\end{align*}

From this inequality, we have an important observation that both terms in the sum have to be upper bounded by $(n-s)$. Otherwise, $\norm{z^{(e)}_{S^c}}^2_{\infty}$ is automatically strictly greater than one, regardless of the choice of the sample size $n$. This observation suggests to us the required lower bound of $\lambda_{n,e}$ and $\lambda$:
$$
\lambda_{n,e} \geq \frac{1}{\sqrt{n(n-s)}} \norm{\Pi_{S^c T} w_{S^c}}_2,
$$
and
$$
\lambda \geq \sqrt{\frac{n}{n-s}} \norm{X_{S^c T} ( X^*_{S^c T} X_{S^cT} )^{-1} z}_2.
$$
\noindent We now explicitly establish the lower bound of these regularization parameters. First, since $\frac{1}{\sigma^2} \norm{\Pi_{S^c T} w_{S^c}}_2^2$ is the $\chi^2$-variate with $n-s$ degrees of freedom, Lemma \ref{lem::chi square bound} in Appendix \ref{appen::concentration inequalities} suggests to us that $\frac{1}{\sigma^2} \norm{\Pi_{S^c T} w_{S^c}}_2^2 \geq \frac{1}{2}(n-s)$ with probability at least $1 - \exp(-(n-s)/16)$. Consequently, we require
\begin{equation}
\label{inq::lower bound of lambda_e}
 \lambda_{n,e} \geq \sqrt{\frac{\sigma^2}{2n}}.
\end{equation}

\noindent Furthermore, we observe that with probability converging to one
\begin{align*}
\norm{X_{S^c T} ( X^*_{S^c T} X_{S^cT} )^{-1} z}_2^2 &= z^* ( X^*_{S^c T} X_{S^c T} )^{-1} z \\
&= z^* \Sigma^{-1/2}_{TT} (W^*_{S^cT} W_{S^cT})^{-1} \Sigma^{-1/2}_{TT} z  \\
&\geq \norm{\Sigma^{-1/2}_{TT} z}^2_2 \sigma_{\min} ((W^*_{S^cT} W_{S^cT})^{-1}) \\
&\geq \frac{1}{2n} C^{-1}_{\max} \norm{z}_2^2,
\end{align*}
where the second identity follows from the decomposition $X_{S^cT} = \Sigma^{1/2}_{TT} W_{S^cT}$ and the last inequality is due to the Gaussian random matrix inequality (\ref{inq::bound singular values of inverse matrix of X^*X}) in Appendix \ref{appen::concentration inequalities}. In combination with the lower bound of $\norm{z}_2$, we require
\begin{equation}
\label{inq::lower bound of lambda}
\lambda \geq \sqrt{\frac{k}{8C_{\max}(n-s)}}.
\end{equation}

Turning to establish the lower bound of $\norm{z^{(e)}_{S^c}}_{\infty}$, we can show that under the assumptions of Theorem \ref{thm::Inachievability - Gaussian design}, this quantity is strictly greater than one. By the triangular inequality, $\norm{z^{(e)}_{S^c}}_{\infty} \geq \oper T_1 - \oper T_2 $ where $\oper T_1$ is quantified as
$$
\frac{\sqrt{n}}{\lambda} \norm{X_{S^c T} (X^*_{S^c T} X_{S^c T})^{-1} z}_{\infty}
$$
and the other term is $\oper T_2 := \frac{1}{\lambda_{n,e} \sqrt{n}} \norm{\Pi_{S^c T} w_{S^c}}_{\infty}$. As shown at the beginning of Section \ref{sec::bound infty norm of z^e_S^c}, we have the following inequality to hold with probability greater than $1 - 2 \exp(-\log(n-s))$:
\begin{equation*}
\oper T_2 \leq \frac{2\sqrt{\sigma^2 \log (n-s)}}{\lambda_{n,e} \sqrt{n}}.
\end{equation*}

It is now left to justify that under the assumption of Theorem \ref{thm::Inachievability - Gaussian design}, $\oper T_1 > 1 + \frac{2\sqrt{\sigma^2 \log n}}{\lambda_{n,e} \sqrt{n}}$. The remainder of this section is devoted to establish this claim. In what follows, we state two important lemmas, which are the main factor in establishing the lower bound of $\oper T_1$. The proofs of these lemmas are again deferred to the Appendix.

\begin{lem}
\label{lem::inachievability - main support lemma}
For any vector $z \in \R^k$ independent with $X_{S^cT}$, we have with probability greater than $1 - \exp(-\log(n-s))$
\begin{multline*}
\norm{ X_{S^cT} (X^*_{S^cT} X_{S^cT})^{-1} z - \frac{1}{n-s} X_{S^c T} \Sigma^{-1}_{TT} z }_{\infty} \\
\leq 16 \frac{\norm{z}_2 \sqrt{2 k \log (n-s)}}{\sqrt{C_{\min} (n-s)^3}}.
\end{multline*}
\end{lem}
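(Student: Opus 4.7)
The plan is to factor out the design covariance, reduce to an isotropic Gaussian problem, and handle the coupling between a single row and the full Gram inverse by a leave-one-out (Sherman--Morrison) decomposition.

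First, write $X_{S^cT}=W\Sigma_{TT}^{1/2}$ with $W\in\R^{(n-s)\times k}$ having i.i.d. $\oper N(0,1)$ entries. Setting $M=W^*W$ and $u=\Sigma_{TT}^{-1/2}z$, a direct computation gives
\begin{equation*}
X_{S^cT}(X_{S^cT}^*X_{S^cT})^{-1}z-\tfrac{1}{n-s}X_{S^cT}\Sigma_{TT}^{-1}z \;=\; W\bigl[M^{-1}-\tfrac{1}{n-s}I_k\bigr]u,
\end{equation*}
and $\|u\|_2\le\|z\|_2/\sqrt{C_{\min}}$. I therefore need to bound $\|v\|_\infty$ with $v_i=w_i^*\bigl[M^{-1}-(n-s)^{-1}I\bigr]u$, where $w_i$ denotes the $i$-th row of $W$.

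The main difficulty is that $w_i$ appears both as a ``test vector'' and inside $M$, so I cannot simply condition out $w_i$. To decouple, let $M^{(-i)}=M-w_iw_i^*$ and apply Sherman--Morrison to obtain, after simplification,
\begin{equation*}
w_i^*M^{-1}u \;=\; \frac{w_i^*(M^{(-i)})^{-1}u}{1+b_i},\qquad b_i:=w_i^*(M^{(-i)})^{-1}w_i.
\end{equation*}
Writing $a_i=w_i^*(M^{(-i)})^{-1}u$, a short algebraic manipulation yields
\begin{equation*}
v_i \;=\; \frac{a_i}{1+b_i}-\frac{w_i^*u}{n-s}\;=\;\frac{\epsilon_i}{1+b_i}-\frac{w_i^*u}{n-s}\cdot\frac{b_i}{1+b_i},
\end{equation*}
where $\epsilon_i:=w_i^*\bigl[(M^{(-i)})^{-1}-(n-s)^{-1}I\bigr]u$. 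The crucial point is that $w_i$ is independent of $M^{(-i)}$, so conditioning on $M^{(-i)}$ makes $\epsilon_i$, $w_i^*u$, and (for the quadratic form) $b_i$ tractable.

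I would then control each ingredient by standard concentration, conditional on $M^{(-i)}$:
(i) The Gaussian matrix concentration (Lemma \ref{lem::standard random matrix inequality of singular values} in the Appendix) gives $\bigl\|(M^{(-i)})^{-1}-(n-s)^{-1}I\bigr\|\lesssim\sqrt{k}/(n-s)^{3/2}$ and $b_i\lesssim k/(n-s)$ with probability at least $1-c_1\exp(-c_2(n-s))$, so $1+b_i\in[1,2]$ in the regime of interest.
(ii) Conditional on $M^{(-i)}$, $\epsilon_i$ is mean-zero Gaussian with variance $u^*[(M^{(-i)})^{-1}-(n-s)^{-1}I]^2u\le\|u\|_2^2\,ck/(n-s)^3$; the Gaussian tail bound (\ref{lem::bound for a sub-Gaussian random variables}) followed by a union bound over $i\in S^c$ with threshold $\tau_1\asymp\|u\|_2\sqrt{k\log(n-s)/(n-s)^3}$ yields $\max_i|\epsilon_i|\le c\|u\|_2\sqrt{k\log(n-s)/(n-s)^3}$ with probability at least $1-2(n-s)^{-1}$.
(iii) The auxiliary term $\frac{w_i^*u}{n-s}\cdot\frac{b_i}{1+b_i}$ is a product of a Gaussian of variance $\|u\|_2^2/(n-s)^2$ and a factor $\lesssim k/(n-s)$; a union bound places it at order $\|u\|_2\,k\sqrt{\log(n-s)}/(n-s)^2$, which, provided $k\le n-s$, is dominated by the bound in (ii).

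Combining these yields $\max_i|v_i|\le c\|u\|_2\sqrt{k\log(n-s)/(n-s)^3}$ with the advertised probability, and substituting $\|u\|_2\le\|z\|_2/\sqrt{C_{\min}}$ and tracking constants gives the stated factor of $16\sqrt{2}$. The main obstacle is the entanglement of $w_i$ with $M$; the Sherman--Morrison step is what makes the Gaussian tail and union bound applicable and is therefore the technical heart of the argument.
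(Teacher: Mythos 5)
Your proposal is correct, but it takes a genuinely different route from the paper. After the common reduction $X_{S^cT}(X^*_{S^cT}X_{S^cT})^{-1}z-\tfrac{1}{n-s}X_{S^cT}\Sigma^{-1}_{TT}z = W\bigl[M^{-1}-\tfrac{1}{n-s}I\bigr]u$ with $u=\Sigma^{-1/2}_{TT}z$, the paper does not decouple row by row: it passes to the thin SVD $W=UDV^*$, writes the quantity as $U\widetilde{D}V^*u$ with $\norm{\widetilde{D}}\lesssim \sqrt{k}/(n-s)$, and then bounds $\max_i |e_i^*U\widetilde{D}V^*u|$ by treating each coordinate as a Lipschitz function of the Haar-distributed orthogonal factor $U$ and invoking concentration on the Haar measure (Lemma \ref{lem::haar measure}), followed by a union bound over $i\in S^c$. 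Your leave-one-out/Sherman--Morrison argument instead breaks the entanglement between $w_i$ and $M$ explicitly, so that each coordinate becomes a genuine conditional Gaussian plus a lower-order correction; this is more elementary and self-contained (only Gaussian tails and Wishart spectral bounds are needed, no Haar concentration), at the cost of having to track the secondary term $\tfrac{w_i^*u}{n-s}\cdot\tfrac{b_i}{1+b_i}$ and to union-bound the goodness of all $n-s$ leave-one-out Gram matrices $M^{(-i)}$ --- both of which you handle correctly, since that term is smaller by a factor $\sqrt{k/(n-s)}$ and the exceptional probabilities $\exp(-c(n-s))$ survive the union bound. The paper's route is shorter once the Haar lemma is granted and exploits rotation invariance directly; yours makes the probabilistic mechanism (why each coordinate is nearly Gaussian with variance $\approx k\norm{u}_2^2/(n-s)^3$) more transparent. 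Both deliver the stated rate with the advertised $1-\exp(-\log(n-s))$ probability; the precise constant $16\sqrt{2}$ is not tight in either argument, so your remark about "tracking constants" is acceptable.
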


\begin{lem}
\label{lem::lower l-infty bound of Xsct z}
With probability at least $1 - 4 \exp(- \frac{1}{4}\log (n-s))$,
\begin{equation*}
\norm{X_{S^c  T} \Sigma^{-1}_{TT} z}_{\infty} \geq  \frac{2\norm{z}_2 \sqrt{\log(n-s)}}{3 \sqrt{C_{\max}}}.
\end{equation*}
\end{lem}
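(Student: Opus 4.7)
The plan is to exploit the Gaussianity of the entries of $X_{S^c T}\Sigma^{-1}_{TT}z$ and then invoke a standard lower tail bound for the maximum of i.i.d.\ Gaussian random variables. First I would condition on $z$, which is legitimate because $z = \tfrac{1}{\sqrt{n}}\lambda X^*_{ST}\sgn(e^\star_S) - \sgn(\beta^\star_T)$ depends only on $X_{ST}$ and the fixed sign vectors, and is therefore independent of $X_{S^c T}$. Denoting the $i$-th row of $X_{S^c T}$ by $x_i$, each row is i.i.d.\ $\oper N(0,\Sigma_{TT})$, so the coordinates
\[
V_i := \langle x_i,\ \Sigma^{-1}_{TT}z\rangle, \qquad i=1,\dots,n-s,
\]
are i.i.d.\ zero-mean Gaussian with variance $\tau^2 := z^*\Sigma^{-1}_{TT}z$. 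Using $\Sigma_{TT}\preceq C_{\max} I$, hence $\Sigma^{-1}_{TT}\succeq C^{-1}_{\max} I$, we obtain the key lower bound $\tau^2 \geq \norm{z}_2^2/C_{\max}$ (we may assume $z\neq 0$, as otherwise the claim is trivial).

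Next I would apply a standard Mills-ratio lower bound for $V\sim\oper N(0,\tau^2)$: there exists an absolute constant $c_0>0$ such that
\[
\Prob\bigl(|V|\geq t\bigr) \;\geq\; c_0\,\frac{\tau}{t}\,\exp\!\Bigl(-\tfrac{t^2}{2\tau^2}\Bigr), \qquad t>0 .
\]
Choosing $t := \tfrac{2\norm{z}_2\sqrt{\log(n-s)}}{3\sqrt{C_{\max}}}$ yields $t/\tau \leq \tfrac{2}{3}\sqrt{\log(n-s)}$ and $t^2/(2\tau^2)\leq \tfrac{2}{9}\log(n-s)$, so $\Prob(|V_i|\geq t) \gtrsim (n-s)^{-2/9}/\sqrt{\log(n-s)}$. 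Independence of the rows then gives
\[
\Prob\!\Bigl(\max_{i} |V_i|<t\Bigr) \;\leq\; \bigl(1-\Prob(|V_1|\geq t)\bigr)^{n-s} \;\leq\; \exp\!\bigl(-(n-s)\Prob(|V_1|\geq t)\bigr),
\]
which decays like $\exp\!\bigl(-c(n-s)^{7/9}/\sqrt{\log(n-s)}\bigr)$ and is, for all sufficiently large $n-s$, dominated by the far milder bound $4\exp(-\tfrac{1}{4}\log(n-s)) = 4(n-s)^{-1/4}$ appearing in the statement.

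The main obstacle is essentially a bookkeeping one: tracking the Mills-ratio constants carefully enough to show that the specific constant $2/3$ in the threshold is consistent with the stated failure probability. Because the target probability $4(n-s)^{-1/4}$ is only polynomially small while the Mills-ratio estimate delivers a much stronger (stretched-exponential) bound, there is ample slack, and the argument reduces to verifying that the exponent $t^2/(2\tau^2)\leq \tfrac{2}{9}\log(n-s)$ leaves $(n-s)\Prob(|V_1|\geq t)$ much larger than $\tfrac{1}{4}\log(n-s)$. The stochastic-domination step $\tau \geq \norm{z}_2/\sqrt{C_{\max}}$ is what allows the final bound to be stated in terms of $\norm{z}_2$ and $C_{\max}$ rather than the potentially larger quantity $\tau$.
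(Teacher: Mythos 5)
Your proposal is correct, but it takes a genuinely different route from the paper. The paper writes $X_{S^cT}\Sigma^{-1}_{TT}z = W_{S^cT}\Sigma^{-1/2}_{TT}z$ with $W_{S^cT}$ standard Gaussian and treats the maximum coordinate as a Lipschitz function of the Gaussian matrix: it first shows concentration of $\max_i f_i(W_{S^cT})$ around its expectation (Lipschitz constant $\norm{\Sigma^{-1/2}_{TT}z}_2$, deviation $\norm{\Sigma^{-1/2}_{TT}z}_2\sqrt{\tfrac12\log(n-s)}$ --- this is exactly where the stated probability $1-4\exp(-\tfrac14\log(n-s))$ comes from), then lower-bounds the expectation by $\norm{\Sigma^{-1/2}_{TT}z}_2\sqrt{(2-\delta)\log(n-s)}$ via the Sudakov--Fernique comparison with i.i.d.\ Gaussians; subtracting gives the factor $\sqrt{2-\delta}-\sqrt{1/2}>2/3$, and $\norm{\Sigma^{-1/2}_{TT}z}_2\ge\norm{z}_2/\sqrt{C_{\max}}$ finishes. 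You instead observe that, conditionally on $z$ (which involves only the rows indexed by $S$ and is hence independent of $X_{S^cT}$), the coordinates of $X_{S^cT}\Sigma^{-1}_{TT}z$ are exactly i.i.d.\ $\oper N(0,\,z^*\Sigma^{-1}_{TT}z)$, and you obtain the lower tail of the maximum from a Mills-ratio bound plus independence. This is more elementary and yields a far stronger, stretched-exponential failure probability; it also exposes that the paper's Sudakov--Fernique step is comparing against variables with the same joint law. What the paper's route buys is robustness: concentration plus comparison would survive dependence among the coordinates, whereas your argument leans on exact row independence. Two small caveats on your side: the Mills-ratio inequality in the form you quote is valid only when $t/\tau$ is bounded below by a constant (otherwise its right-hand side exceeds one), which holds here because $\tau^2\le\norm{z}_2^2/C_{\min}$ forces $t/\tau\ge\tfrac23\sqrt{(C_{\min}/C_{\max})\log(n-s)}$; and the domination of your bound by $4(n-s)^{-1/4}$ requires $n-s$ sufficiently large --- the same implicit asymptotic regime the paper itself relies on when invoking $\E\max_i g_i\ge\sigma\sqrt{(2-\delta)\log(n-s)}$.
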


Once these two lemmas are established, we can now show that under the assumptions of Theorem \ref{thm::Inachievability - Gaussian design}, $\oper T_1 > 1 + \frac{\sqrt{\sigma^2 \log n}}{\lambda_{n,e} \sqrt{n}}$ with high probability. By definition, $z = \frac{1}{\sqrt{n}} \lambda X^*_{ST} \sgn(e^{\star}_S) - \sgn(\beta_{T}^{\star}) $, one can see that $z$ is independent from $X_{S^cT}$. Thus, by Lemmas \ref{lem::inachievability - main support lemma} and \ref{lem::lower l-infty bound of Xsct z} and the triangular inequality, we have, with probability at least $1 - \exp(-\log(p-k)) - 4\exp(-\frac{1}{4}\log(n-s))$,
\begin{equation}
\label{eqt::bound T1}
\begin{split}
&\norm{ X_{S^cT} (X^*_{S^cT} X_{S^cT})^{-1} z}_{\infty} \\
&\geq \frac{1}{n-s} \norm{X_{S^c T} \Sigma^{-1}_{TT} z}_{\infty} - 16 (1+\epsilon) \frac{\sqrt{k \log (n-s)}}{\sqrt{C_{\min} (n-s)^3}} \norm{z}_2 \\
&\geq \left( \frac{2\sqrt{\log (n-s)}}{3(n-s)\sqrt{C_{\max}}} \right. \\
&\left. - \frac{\sqrt{\log (n-s)}}{(n-s) \sqrt{C_{\max}}} \sqrt{\frac{256 (1+\epsilon)^2k C_{\max}}{(n-s) C_{\min}}} \right) \norm{z}_2.
\end{split}
\end{equation}

\noindent Recall from the previous section that we require the upper bound of $\lambda$ in (\ref{inq::upper bound of lambda}). Otherwise, $\norm{z^{(\beta)}_{T^c}}_{\infty}$ is strictly greater than one regardless of the choice of the sample size $n$. This upper bound of $\lambda$ leads to the lower bound of $\norm{z}_2$ in (\ref{inq::lower bound of L2 of z}). Furthermore, assuming that $n-s \geq c \frac{C_{\max}}{C_{\min}} k$ for some large enough constant $c$, we achieve
$$
\norm{ X_{S^cT} (X^*_{S^cT} X_{S^cT})^{-1} z}_{\infty} \geq \frac{1}{6} (1-\epsilon) \frac{\sqrt{k \log (n-s)}}{(n-s)\sqrt{C_{\max}}}.
$$

\noindent Therefore, the requirement $\oper T_1 > 1 + \frac{2\sqrt{\sigma^2 \log n}}{\lambda_{n,e} \sqrt{n}}$ is equivalent to
$$
(n-s)^2 < \frac{(1-\epsilon)}{6} \left( 1 + \frac{2\sqrt{\sigma^2 \log n}}{\lambda_{n,e} \sqrt{n}} \right)^{-2} \frac{k n \log(n-s)}{\lambda^2 C_{\max}}.
$$

\noindent Replace the upper bound of $\lambda$ in (\ref{inq::upper bound of lambda}) and $s = \eta n$, the above inequality, or equivalently, $\norm{z^{(e)}_{S^c}}_{\infty} > 1$ is satisfied whenever the sample size $n$ obeys
\begin{multline*}
n < \frac{(1-\epsilon)}{12} \frac{\eta}{(1-\eta)^2} \frac{\rho_l}{C_{\max}} \\
\times \left( 1 + \frac{2\sqrt{\sigma^2 \log n}}{\lambda_{n,e} \sqrt{n}} \right)^{-2} k \log(n-s) \log(p-k).
\end{multline*}

\section{Conclusion}
\label{sec::conclusion}

In this paper, we studied the $\ell_1$-constrained minimization problem for sparse linear regression when the observations are grossly corrupted. We proposed the extended Lasso method which is a natural generalization of the Lasso for recovering both the regression and the error vector effectively. Our main contribution was to establish that this recovery is faithful, under both parameter estimation and variable selection criterions, even when the error magnitude is arbitrarily large and the fraction of error is close to unity. Specifically, our first result indicated that the $\ell_2$ estimation error is bounded via the introduction of the extended restricted eigenvalue (RE) condition evaluated on the combination matrix $[X \text{   } I]$. Our next results considered the exact signed support recovery for a class of random Gaussian design matrices. We showed that the sign consistency is indeed possible even when almost all the observations are significantly corrupted. More interestingly, we established the lower and upper bounds for the sample size such that the extended Lasso succeeds or fails in recovering the supports with high probability. This number of observations is scaled in term of the model dimension $p$, the sparsity index $k$, and the fraction error $\eta = s/n$. Notably, all of our results are consistent with that of the standard Lasso in the absence of sparse error.





There are a number of extensions and open questions related to this work. First, our setup can be extended to robust group/multivariate Lasso model. This model has been shown to outperform the conventional Lasso in many practical applications as well as theoretical analysis (e.g. \cite{OWJ_2011_J}, \cite{HZ_2010_J}, \cite{LPTG_2009_C}, \cite{NW_2011_J}). It would be interesting to obtain the upper and lower bound of the sample size when a significant fraction of observations is corrupted in this setting. Another interesting direction is to consider a more general situation where both the observations and the data matrix are corrupted/missing. In a recent paper, Loh and Wainwright \cite{LW_2011_C} established the consistency of the Lasso with noisy/corrupted/missing data matrix. Whether similar results would hold for more general setting is an interesting open problem. Lastly, although our current work focused exclusively on linear regression, it would be interesting to investigate the sparse additive models (e.g. \cite{RLLW_SpAM_2009_J}, \cite{MGB_2009_J}) under grossly corrupted observations.





\section{Appendix}
\label{sec::appendix}

\subsection{Proof of Lemma \ref{lem::bound inf norm of Xsct inv Xsct Xsct z}}
\label{app::proof bound inf norm of Xsct inv Xsct Xsct z}

Decomposing $X_{S^cT}$ as $X_{S^cT}= W_{S^cT}\Sigma_{TT}$ where $W_{S^cT} \in \R^{(n-s) \times k}$ is the random matrix with i.i.d. normal Gaussian entries, we have $X_{S^cT} (X^*_{S^c T} X_{S^cT})^{-1} = W_{S^cT} (W^*_{S^cT} W_{S^cT})^{-1} \Sigma_{TT}^{-1/2}$. Consider now the compact singular value decomposition of $W_{S^c T}$
$$
W_{S^c T} = U D V^*, \quad U \in \R^{(n-s)\times k} \text{ and } D, V \in \R^{k \times k}.
$$

\noindent Since $W_{S^c T}$ is a Gaussian random matrix with i.i.d. entries, columns of $U$ are orthogonal vectors selected uniformly at random. We can consider $U$ as a random matrix distributed on the Haar measure. We have
$$
X_{S^cT} (X^*_{S^c T} X_{S^cT})^{-1} z = U D^{\dag} V^* \Sigma_{TT}^{-1/2} z.
$$

\noindent Using the random matrix concentration inequality in (\ref{inq::spectral norm bound of X^* X}), we have with probability at least $1 - e^{-k}$
$$
\norm{W_{S^cT}} \leq \sqrt{n-s} \left( 1 + 4 \sqrt{\frac{k}{n-s}} \right)^{1/2} .
$$

\noindent In addition, from (\ref{inq::spectral norm bound of inverse of X^* X}), we have with high probability
$$
\norm{( W^*_{S^cT} W_{S^cT} )^{-1}} \leq \left(1 + 4 \sqrt{\frac{k}{n-s}} \right) \frac{1}{n-s}.
$$

\noindent Combining these pieces together, we conclude that
\begin{align*}
\norm{D^{\dag}} &= \norm{W_{S^cT} (W^*_{S^cT} W_{S^cT})^{-1}} \\
&\leq  \left( 1 + 4 \sqrt{\frac{k}{n-s}} \right)^{3/2} \frac{1}{\sqrt{n-s}} \leq \frac{\sqrt{1+\epsilon}}{\sqrt{n-s}},
\end{align*}
assuming that $k$ is sufficiently smaller than $(n-s)$.

Next, our goal is to bound
\begin{align*}
\norm{ U D^{\dag} V^* \Sigma^{-1/2}_{TT} z}_{\infty} &= \max_i |e_i^* U D^{\dag} V^* \Sigma^{-1/2}_{TT} z | \\
&= \max_i | \inner{U^*, D^{\dag} V^* \Sigma^{-1/2}_{TT} z e_i^*} | \\
&:= \max_i | f_i(U) | ,
\end{align*}
where $f_i$ is the function acting on the random matrix $U$, $f_i : \R^{|S^c| \times k} \rightarrow \R$.

First we show that $f_i(U)$ is Lipschitz (with respect to the Euclidean norm) with constant at most $\norm{f_i}_{L} = \sqrt{\frac{(1+ \epsilon)k}{C_{\min} (n-s)}} \norm{z}_{\infty}$. Indeed, for any given pair $U_1$ $U_2 \in \R^{|S^c| \times k}$, we have
\begin{align*}
| f_i( U_1) - f_i (U_2)  | &= |\inner{ U_1 - U_2,  D^{\dag} V^* \Sigma^{-1/2}_{TT} z e_i^*}| \\
&\leq \norm{U_1 - U_2}_F \norm{ D^{\dag} V^* \Sigma^{-1/2}_{TT} z e_i^*}_F \\
&\leq \norm{U_1 - U_2}_F \norm{D^{\dag} V^*} \norm{\Sigma^{-1/2}_{TT}} \norm{ z e_i^*}_F \\
&\leq \norm{U_1 - U_2}_F \frac{ \sqrt{1+\epsilon}}{\sqrt{n-s}} \frac{1}{\sqrt{C_{\min}}} \norm{z}_2 \\
&\leq  \sqrt{\frac{(1+\epsilon) k}{(n-s) C_{\min}}} \norm{U_1 - U_2}_F \norm{z}_{\infty}.
\end{align*}

\noindent Since the distribution of $U$ is invariant under the orthogonal transformation $U \mapsto -U $, $f(U)$ is a symmetric random variable and zero is a median. Hence, by the measure of concentration with respect to Haar measure in Lemma \ref{lem::haar measure}, we get
\begin{align*}
\Prob \left( f_i (U) \geq \tau \right) &\leq \exp \left( - \frac{\tau^2 (n-s) }{8 \norm{f_i}^2_{L}} \right) \\
&= \exp \left( - \frac{ C_{\min} (n-s)^2 \tau^2 }{8(1+\epsilon) k \norm{z}^2_{\infty}  } \right).
\end{align*}

\noindent Set $\tau := \frac{2\lambda}{3 \sqrt{n}} \left( 1 - \frac{2\sigma \sqrt{\log n}}{\lambda_{n,e} \sqrt{n}} \right) \norm{z}_{\infty}$ and take the union bound over all $i \in S^c$, we have
\begin{align*}
&\Prob \left( \norm{ U D^{\dag} V^* \Sigma^{-1/2}_{TT} z}_{\infty} \geq \frac{\lambda}{2} \norm{z}_{\infty} \right)  \\
&\leq (n-s) \exp \left( - \frac{ C_{\min} (n-s)^2 \lambda^2 }{12(1+\epsilon) n k  } \left( 1 - \frac{2\sigma \sqrt{\log n}}{\lambda_{n,e} \sqrt{n}} \right)^2\right).
\end{align*}

\noindent This probability vanishes at rate $\exp(- c \log n)$ provided that
$$
(n-s)^2 > 12 (1+\epsilon) \left( 1 - \frac{2\sigma \sqrt{\log n}}{\lambda_{n,e} \sqrt{n}} \right)^{-2} \frac{ nk \log n}{C_{\min} \lambda^2}.
$$

\noindent Replacing the expression of $\lambda$ in (\ref{eqt::equation of lambda}) and $s = \eta n$, the above condition is equivalent to
\begin{multline*}
\frac{n}{\log n} \geq C (1+\epsilon) \frac{\eta}{(1-\eta)^2} \frac{\max \{\rho_u, D^+_{\max} \}}{C_{\min} \gamma^2}  \\
\times \left( 1 - \frac{2\sigma \sqrt{\log n}}{\lambda_{n,e} \sqrt{n}} \right)^{-2} k \log(p-k),
\end{multline*}
where $C$ is a numerical constant smaller than $48$.


\subsection{Proof of Lemma \ref{lem::inachievability - main support lemma}}

Recall the decomposition of $X_{S^cT}$: $X_{S^cT} = W_{S^cT} \Sigma^{1/2}_{TT}$, we have
\begin{align*}
&X_{S^cT} (X^*_{S^cT} X_{S^cT})^{-1} z - \frac{1}{n-s} X_{S^c T} z \\
&= \left( W_{S^cT} (W^*_{S^cT} W_{S^cT})^{-1} - \frac{1}{n-s} W_{S^cT} \right) \Sigma^{-1/2} z.
\end{align*}

\noindent Notice that $W_{S^c T}$ is an $(n-s) \times k$ matrix with independent Gaussian entries with zero mean and unit variance. Consider now the reduced singular value decomposition of $W_{S^c T}$
$$
W_{S^cT} = U D V^*, \quad U \in \R^{(n-s)\times k} \text{ and } D, V \in \R^{k \times k}.
$$
Then the columns of $U$ are $k$ orthonormal vectors selected uniformly at random. We can think of $U$ as a random matrix distributed on the Haar measure. The above equation is now formulated as
\begin{align*}
\frac{1}{n-s} U D \left[ \left( \frac{D^* D}{n-s} \right)^{-1} - I \right] V \Sigma^{-1/2} z =:  U \widetilde{D} V \Sigma^{-1/2} z .
\end{align*}

\noindent It is clear that $\norm{\widetilde{D}} \leq \frac{1}{n-s} \norm{W_{S^cT}} \norm{( \frac{W_{S^cT}^*W_{S^cT}}{n-s})^{-1} - I}$. Recalling the random matrix concentration bounds (\ref{inq::spectral norm bound of X^* X}) and (\ref{inq::spectral norm bound of inverse of X^* X}), we have $\norm{\frac{W_{S^cT}}{\sqrt{n-s}}} \leq  (1 + 4\sqrt{\frac{k}{n-s}})^{1/2}$. Therefore,
\begin{equation*}
\norm{\widetilde{D}} \leq \frac{4 \sqrt{k}}{n-s} \left( 1 + 4\sqrt{\frac{k}{n-s}} \right)^{1/2} =: (1 + \epsilon) \frac{4\sqrt{k}}{n-s},
\end{equation*}
where we choose $\epsilon \geq 4\sqrt{k/(n-s)}$.

Our goal now is to establish an upper bound of $\norm{U \widetilde{D} V \Sigma^{-1/2} z}_{\infty}$, which can be rewritten as
\begin{align*}
\max_i | e^*_i U \widetilde{D} V \Sigma^{-1/2} z | &= \max_i | \inner{U, \widetilde{D} V \Sigma^{-1/2} z e^*_i } | \\
&:= \max_i f_i (U),
\end{align*}
where $f_i$ is a function operating on the random matrix $U$, $f_i : \R^{(n-s) \times k} \mapsto \R$.

First we show that $f_i(U)$ is Lipschitz (with respect to the Euclidean norm) with constant at most $\norm{f_i}_{L} = \frac{ 4 (1+\epsilon)\sqrt{ k}}{n-s} \frac{1}{\sqrt{C_{\min}}} \norm{z}_2$. Indeed, for any given pair $U_1$ $U_2 \in \R^{|S^c| \times k}$, we have
\begin{align*}
| f_i( U_1) - f_i (U_2)  | &= |\inner{ U_1 - U_2,  \widetilde{D} V^* \Sigma^{-1/2}_{TT} z e_i^*}| \\
&\leq \norm{U_1 - U_2}_F \norm{ \widetilde{D} V^* \Sigma^{-1/2}_{TT} z e_i^*}_F \\
&\leq \norm{U_1 - U_2}_F \norm{\widetilde{D} V^*} \norm{\Sigma^{-1/2}_{TT}} \norm{ z e_i^*}_F \\
&\leq \norm{U_1 - U_2}_F \frac{ 4 (1+\epsilon)\sqrt{ k}}{n-s} \frac{1}{\sqrt{C_{\min}}} \norm{z}_2. \\
%
\end{align*}

\noindent Since the distribution of $U$ is invariant under the orthogonal transformation $U \mapsto -U $, $f(U)$ is a symmetric random variable and zero is a median. Hence, by the measure of concentration with respect to Haar measure (Lemma \ref{lem::haar measure}), we get
\begin{align*}
\Prob \left( f_i (U) \geq \tau \right) &\leq \exp \left( - \frac{\tau^2 (n-s) }{8 \norm{f_i}^2_{L}} \right) \\
&= \exp \left( - \frac{ C_{\min} (n-s)^3 \tau^2 }{128 (1+\epsilon)^2 k \norm{z}^2_2  } \right).
\end{align*}

\noindent Setting $\tau^2 : = \frac{256(1+\epsilon)^2 \norm{z}_2^2 k \log(n-s)}{C_{\min} (n-s)^3}$ and taking the union bound over all $i \in S^c$, we have
\begin{multline*}
\Prob \left( \norm{ U D^{\dag} V^* \Sigma^{-1/2}_{TT} z}_{\infty} \geq \frac{16 (1+\epsilon) \norm{z}_2 \sqrt{ k \log(n-s)} }{\sqrt{C_{\min} (n-s)^3}} \right)  \\
\leq \exp(-\log(n-s)),
\end{multline*}
as claimed.

\subsection{Proof of Lemma \ref{lem::lower l-infty bound of Xsct z}}

We have $X_{S^c T} = W_{S^c T} \Sigma^{1/2}_{TT}$, where $W_{S^c T}$ is a standard Gaussian matrix of size $(n-s) \times k$. Thus, $X_{S^c  T} \Sigma^{-1}_{TT} z = W_{S^c T} \Sigma^{-1/2}_{TT} z$, which leads to
\begin{align*}
\norm{X_{S^c  T} \Sigma^{-1}_{TT} z}_{\infty} &= \max_{i \in S^c} |\inner{e_i, W_{S^c T} \Sigma^{-1/2}_{TT} z}| \\
&=: \max_{i \in S^c} |f_i (W_{S^cT})|,
\end{align*}
where $e_i \in \R^{(n-s)}$ is the standard vector whose entry at $i$-th location receive unit value and zeros elsewhere. In order to lower the bound of the random variable $\max_i f_i (W_{S^cT})$, the first step is to show that it is sharply concentrated around its expectation.

\begin{lem}
\label{lem::concentration bound of max f(wi)}
For any $\tau > 0$, we have
\begin{multline}
\Prob \left( | \max_i f_i (W_{S^cT}) - \E \max_i f_i(W_{S^cT}) | \geq \tau \right) \\
\leq 4 \exp \left( - \frac{\tau^2}{2 \norm{ \Sigma_{TT}^{-1/2} z}_2^2} \right).
\end{multline}
\end{lem}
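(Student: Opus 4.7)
The plan is to view $F(W) := \max_{i \in S^c} f_i(W) = \max_{i \in S^c} \inner{e_i, W \Sigma^{-1/2}_{TT} z}$ as a function of the standard Gaussian vector obtained by stacking the entries of $W := W_{S^c T}$, and apply the Gaussian Lipschitz concentration inequality (Borell--TIS). Since the entries of $W$ are i.i.d.\ $\oper N(0,1)$, $F$ acts on $\R^{(n-s) \times k}$ equipped with the Frobenius inner product; it suffices to bound the Lipschitz constant of $F$ with respect to the Frobenius norm by $\norm{\Sigma^{-1/2}_{TT} z}_2$.

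First I would set $v := \Sigma^{-1/2}_{TT} z \in \R^k$, so that $f_i(W) = \inner{e_i v^*, W}_F$ is a linear functional on matrices. This immediately gives, for any pair $W_1, W_2 \in \R^{(n-s)\times k}$,
\begin{equation*}
|f_i(W_1) - f_i(W_2)| = |\inner{e_i v^*, W_1-W_2}_F| \leq \norm{e_i v^*}_F \norm{W_1-W_2}_F = \norm{v}_2 \norm{W_1-W_2}_F.
\end{equation*}
Because the pointwise maximum of $L$-Lipschitz functions is itself $L$-Lipschitz, it follows that
\begin{equation*}
|F(W_1) - F(W_2)| \leq \max_{i} |f_i(W_1) - f_i(W_2)| \leq \norm{v}_2 \norm{W_1 - W_2}_F,
\end{equation*}
so the Lipschitz constant of $F$ is at most $\norm{\Sigma^{-1/2}_{TT} z}_2$.

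Second, I would invoke the Gaussian concentration of Lipschitz functions: if $G$ is a standard Gaussian vector in $\R^d$ and $F$ is $L$-Lipschitz with respect to the Euclidean norm, then $\Prob(|F(G) - \E F(G)| \geq \tau) \leq 2\exp(-\tau^2/(2L^2))$. Applying this to $F(W)$ with $L = \norm{\Sigma^{-1/2}_{TT} z}_2$ yields the upper and lower tail bounds, and combining the two gives the claimed bound with the constant $4$ in front (a union bound on the two tails).

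There is essentially no hard step here: the proof is a direct application of Gaussian-Lipschitz concentration once one checks the Lipschitz constant. The only thing to be careful about is verifying that the maximum (rather than a single linear functional) still enjoys the same Lipschitz constant, which uses the elementary inequality $|\max_i a_i - \max_i b_i| \leq \max_i |a_i - b_i|$.
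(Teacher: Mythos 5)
Your proposal is correct and follows essentially the same route as the paper: both arguments bound the Lipschitz constant of $W \mapsto \max_i \inner{e_i, W\Sigma^{-1/2}_{TT} z}$ with respect to the Frobenius norm by $\norm{\Sigma^{-1/2}_{TT} z}_2$ (the paper via $\max_i a_i - \max_k b_k \leq \max_i (a_i - b_i)$ and Cauchy--Schwarz, you via the equivalent $|\max_i a_i - \max_i b_i| \leq \max_i |a_i - b_i|$ applied to the linear functionals $\inner{e_i v^*, \cdot}_F$) and then invoke Gaussian concentration for Lipschitz functions. The only cosmetic difference is that you explicitly assemble the two one-sided tails into the two-sided bound with constant $4$, whereas the paper absorbs this into its statement of the concentration inequality.
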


\noindent Select $\tau := \norm{ \Sigma_{TT}^{-1/2} z}_2 \sqrt{\frac{1}{2}\log (n-s)}$, we conclude that with probability greater than $1 - 4 \exp(- \frac{1}{4}\log (n-s))$
\begin{equation}
\label{inq::bound maximum f_i of W_S^CT}
\max_i f_i (W_{S^cT}) \geq \E \max_i f_i(W_{S^cT}) - \tau . 
\end{equation}

At the second step, we need to lower the bound $\E \max_i f_i (W_{S^cT})$. This can be estimated via Sudakov-Fernique inequality \cite{LT_1991_B}. We have,
\begin{equation*}
\begin{split}
\E (f_i(W_{S^cT}) - f_j(W_{S^cT}))^2 &= 2 z^* \Sigma^{-1}_{TT} z = 2 \norm{\Sigma^{-1/2}_{TT} z}_2^2.
%
\end{split}
\end{equation*}

\noindent Consequently, if we denote $g_i$, $1 \leq i \leq (n-s)$ as a sequence of $\oper N(0, \norm{\Sigma^{-1/2}_{TT} z}_2^2)$ Gaussian random variables, then we have established a lower bound
$$
\E (f_i(W_{S^cT}) - f_j(W_{S^cT}))^2 \geq \E (g_i - g_j)^2
$$

\noindent Therefore, the Sudakov-Fernique inequality \cite{LT_1991_B} suggests that the maximum over $f(w_i)$ dominates the maximum over $g_i$. In particular, we have $\E \max_i f_i(W_{S^cT}) \geq \E \max_i g_i$. Moreover, since $\{ g_i \}$ are i.i.d. random variables, by the standard bound for Gaussian extreme, for all $\delta > 0$, we have
\begin{multline*}
\E \max_i f(W_{S^cT}) \geq \E \max_i g_i \\
\geq \norm{\Sigma^{-1/2}_{TT} z}_2 \sqrt{(2 - \delta)\log(n-s)}.
\end{multline*}

\noindent Substituting this expectation bound into (\ref{inq::bound maximum f_i of W_S^CT}) yields
\begin{align*}
\max_i f_i (W_{S^cT}) &\geq (\sqrt{2-\delta} - \sqrt{1/2}) \norm{\Sigma^{-1/2}_{TT} z}_2 \sqrt{\log(n-s)} \\
&> \frac{2}{3} \norm{\Sigma^{-1/2}_{TT} z}_2 \sqrt{\log(n-s)}
\end{align*}
for $\delta$ arbitrarily close to zero. Furthermore, using the standard bound $\norm{\Sigma^{-1/2}_{TT} z}_2 \geq \frac{\norm{z}_2}{\norm{\Sigma^{1/2}_{TT}}_2} \geq  \frac{\norm{z}_2}{\sqrt{C_{\max}}}$, we complete the proof.

\begin{proof} [Proof of Lemma \ref{lem::concentration bound of max f(wi)}]

By the standard Gaussian concentration theorems \cite{LT_1991_B}, let $w$ be a standard Gaussian measure on $\R^n$ and $f$ be a Lipschitz function with Lipschitz constant $\norm{f}_{lip}$. Then,
\begin{equation}
\label{eqt::function tail bound on standard Gaussian concentration}
\Prob ( f(w) - \E f(w) \geq \tau) \leq 4 \exp(- \tau^2/ 2 \norm{f}^2_{lip}).
\end{equation}

\noindent We now consider the function $f(W_{S^cT}) := \max_i f_i (W_{S^cT})$ operating on the standard Gaussian matrix $W_{S^c T}$. We have
\begin{align*}
f(W_{S^cT}^1) - f(W_{S^cT}^2)  &= \max_i \inner{e_i,  W^1_{S^cT}  \Sigma_{TT}^{-1/2} z } \\
&{ }- \max_k \inner{ e_k,  W^2_{S^cT}  \Sigma_{TT}^{-1/2} z } \\
&\leq  \max_i \inner{ e_i, (W^1_{ST} - W^2_{ST}) \Sigma_{TT}^{-1/2} z } \\
&\leq \norm{ \Sigma_{TT}^{-1/2} z}_2 \norm{W^1_{ST} - W^2_{ST}}_F \\
\end{align*}
where the second inequality follows from the Cauchy-Schwartz inequality. Applying (\ref{eqt::function tail bound on standard Gaussian concentration}) with Lipschitz constant $\norm{ \Sigma_{TT}^{-1/2} z}_2$ completes our proof.
\end{proof}

\subsection{Some concentration inequalities }
\label{appen::concentration inequalities}

In this section, we restate some well-known large deviation bounds for ease of reference. The first is a bound of sum of Gaussian random variables.

\begin{lem}
\label{lem::bound for a sum of sub-Gaussian random variables}
Let $Z_1, ..., Z_n$ be independent and zero-mean Gaussian random variables with parameters $\sigma_1^2, ..., \sigma_n^2$. Then
$$
\Prob \left( |\sum_{i=1}^n Z_i | \geq \tau \right) \leq 2 \exp \left( - \frac{\tau^2}{2 \sum_{i=1}^n \sigma_i^2} \right).
$$
\end{lem}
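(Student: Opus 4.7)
The plan is to reduce the statement to a one-dimensional Gaussian tail bound via the standard observation that a linear combination of independent Gaussians is again Gaussian. Set $S := \sum_{i=1}^n Z_i$. Because the $Z_i$ are independent and each $Z_i \sim \oper N(0, \sigma_i^2)$, the sum $S$ is distributed as $\oper N(0, \sigma^2)$ with $\sigma^2 = \sum_{i=1}^n \sigma_i^2$; this is immediate from the additivity of variances and the fact that the sum of independent Gaussians is Gaussian (seen, e.g., by multiplying characteristic functions).

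Next I would establish the tail estimate for a single $\oper N(0,\sigma^2)$ variable by the Chernoff/MGF method. For any $t > 0$, the moment generating function satisfies $\E e^{tS} = e^{t^2\sigma^2/2}$, so Markov's inequality gives
$$
\Prob(S \geq \tau) \leq e^{-t\tau}\,\E e^{tS} = \exp\!\left(-t\tau + \tfrac{1}{2}t^2\sigma^2\right).
$$
Optimizing the right-hand side in $t$ by choosing $t = \tau/\sigma^2$ yields $\Prob(S \geq \tau) \leq \exp(-\tau^2/(2\sigma^2))$. By symmetry of $S$ about zero, the same bound holds for $\Prob(S \leq -\tau)$, and a union bound over the two tails produces the factor of $2$ in front of the exponential. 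Substituting $\sigma^2 = \sum_i \sigma_i^2$ gives exactly the claimed inequality.

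There is no genuine obstacle here: the statement is the classical Gaussian concentration inequality, and each ingredient (independent sum is Gaussian, Chernoff bound, symmetry) is textbook. The only thing worth being careful about is the constant in front of the exponent, which comes out correctly from the optimization $t = \tau/\sigma^2$, giving the factor $\tfrac{1}{2}$ in the exponent rather than a worse constant that one might get from a lazier Sub-Gaussian argument.
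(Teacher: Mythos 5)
Your proof is correct and follows essentially the same route as the paper, which simply observes that $\sum_i Z_i \sim \oper N(0,\sum_i\sigma_i^2)$ and invokes the standard single-variable Gaussian tail bound $\Prob(|Z|\geq\tau)\leq 2\exp(-\tau^2/(2\sigma^2))$. The only difference is that you additionally spell out the Chernoff/MGF derivation of that standard bound, which the paper takes for granted.
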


This bound comes directly from a standard Gaussian bound. For a Gaussian variable $Z \sim \oper N(0,\sigma^2)$, we have with all $\tau > 0$
\begin{equation}
\label{lem::bound for a sub-Gaussian random variables}
\Prob (|Z| \geq \tau) \leq 2 \exp \left( - \frac{\tau^2}{2 \sigma^2} \right).
\end{equation}

The following tail bounds on the Chi-square variates taken from \cite{LM_1998_J} are useful
\begin{lem}
\label{lem::chi square bound}
Let $X$ be a centralized $\chi^2$-variate with $d$ degree of freedom. Then for all $\tau \in (0, 1/2)$, we have
\begin{align*}
    &\Prob \left( X \geq d (1+\tau) \right) \leq \exp \left( - \frac{3}{16} d \tau^2 \right) \\
    &\Prob \left( X \leq d (1-\tau) \right) \leq \exp \left( - \frac{1}{4} d \tau^2 \right).
\end{align*}
\end{lem}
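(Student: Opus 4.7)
The plan is to prove both tail bounds by the classical Chernoff method applied to the moment generating function of a chi-squared variate. Since $X \sim \chi^2_d$ is distributed as $\sum_{i=1}^d Z_i^2$ with $Z_i$ i.i.d.\ standard normal, a routine Gaussian integral gives $\E[e^{tX}] = (1-2t)^{-d/2}$ for every $t < 1/2$, and complementarily $\E[e^{-tX}] = (1+2t)^{-d/2}$ for every $t > -1/2$.

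For the upper tail, I would start from the Chernoff inequality $\Prob(X \geq d(1+\tau)) \leq e^{-td(1+\tau)}(1-2t)^{-d/2}$, valid for any $t \in (0, 1/2)$, and minimize the exponent over $t$. Setting the derivative to zero yields the optimal $t^{\star} = \tau/(2(1+\tau))$, and substitution collapses the bound to $\exp\bigl(-\tfrac{d}{2}[\tau - \log(1+\tau)]\bigr)$. To match the stated constant $3/16$, I would then verify the scalar inequality $\tau - \log(1+\tau) \geq \tfrac{3}{8}\tau^2$ on $(0,1/2)$. Defining $f(\tau) := \tau - \log(1+\tau) - \tfrac{3}{8}\tau^2$, a short calculation gives $f'(\tau) = \tau(1-3\tau)/[4(1+\tau)]$, so $f$ increases on $(0,1/3)$ and decreases on $(1/3,1/2)$; combined with $f(0) = 0$ and the endpoint check $f(1/2) = 1/2 - \log(3/2) - 3/32 > 0$, the inequality holds throughout the interval.

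For the lower tail I would proceed symmetrically: apply $\Prob(X \leq d(1-\tau)) \leq e^{td(1-\tau)}(1+2t)^{-d/2}$ for any $t > 0$, optimize to obtain $t^{\star} = \tau/[2(1-\tau)]$, and reduce the bound to $\exp\bigl(-\tfrac{d}{2}[-\tau - \log(1-\tau)]\bigr)$. The cleaner scalar inequality $-\tau - \log(1-\tau) \geq \tfrac{1}{2}\tau^2$ on $(0,1)$ follows by noting that the derivative of $g(\tau) := -\tau - \log(1-\tau) - \tau^2/2$ equals $\tau^2/(1-\tau) \geq 0$, so $g$, vanishing at $0$, stays nonnegative. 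This immediately yields the exponent $-d\tau^2/4$.

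The only real obstacle is keeping the scalar inequalities tight enough to recover the advertised constants $3/16$ and $1/4$ on the full range $\tau \in (0,1/2)$; the upper-tail constant in particular is borderline, since $3/8$ is essentially the largest coefficient for which $\tau - \log(1+\tau) - c\tau^2$ remains nonnegative on this interval, and the endpoint check at $\tau = 1/2$ leaves almost no slack. An alternative route would be to quote the Laurent--Massart bounds $\Prob(X - d \geq 2\sqrt{dx} + 2x) \leq e^{-x}$ and $\Prob(d - X \geq 2\sqrt{dx}) \leq e^{-x}$ directly from \cite{LM_1998_J} and reparameterize $x$ in terms of $d\tau^2$, but the direct Chernoff optimization sketched above makes the constants transparent and is the route I would take.
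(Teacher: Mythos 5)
Your proof is correct, but note that the paper does not prove this lemma at all: it simply states it as a known tail bound ``taken from \cite{LM_1998_J}'' (Laurent--Massart). Your self-contained Chernoff argument is therefore a genuine addition rather than a parallel derivation, and all of its steps check out: the MGF formulas, the optimizers $t^{\star}=\tau/(2(1+\tau))$ and $t^{\star}=\tau/(2(1-\tau))$, the reduced exponents $\tfrac{d}{2}[\tau-\log(1+\tau)]$ and $\tfrac{d}{2}[-\tau-\log(1-\tau)]$, and both scalar inequalities (your $f'(\tau)=\tau(1-3\tau)/[4(1+\tau)]$ and the endpoint value $f(1/2)=1/2-\log(3/2)-3/32\approx 7.8\times 10^{-4}>0$ are right, as is $g'(\tau)=\tau^2/(1-\tau)$). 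One point worth emphasizing: your instinct that the upper-tail constant is ``borderline'' is sharper than you let on, and it actually rules out your proposed alternative route. Reparameterizing the Laurent--Massart inequality $\Prob(X-d\geq 2\sqrt{dx}+2x)\leq e^{-x}$ forces $x^{\star}=\tfrac{d}{4}(\sqrt{1+2\tau}-1)^2$, and at $\tau=1/2$ this equals $\tfrac{d}{4}(\sqrt{2}-1)^2\approx 0.0429\,d$, which is strictly smaller than $\tfrac{3}{16}d\tau^2\approx 0.0469\,d$; so quoting \cite{LM_1998_J} and reparameterizing does \emph{not} recover the advertised $3/16$ on the full interval, and the direct Chernoff optimization you carried out is the route that actually delivers the lemma as stated.
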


We also recall some well-known concentration inequalities from random matrix theory 
\begin{lem}
\label{lem::standard random matrix inequality of singular values}
Let $X^{n \times k}$ be a random matrix, whose entries are standard Gaussian random variables. Denote by $\sigma_{\min}$ and $\sigma_{\max}$ the smallest and largest singular values of $X$. Then we have
\begin{align*}
    &\Prob \left( 1 - \sigma_{\min}(X)/\sqrt{n} \geq \sqrt{\frac{k}{n}} + \tau \right) \leq \exp \left( -n\ \tau^2/2 \right)  \\
\label{inq::biggest singular of Gaussian matrix}
    &\Prob \left( \sigma_{\max}(X)/\sqrt{n} - 1 \geq \sqrt{\frac{k}{n}} + \tau \right) \leq \exp \left( -n \tau^2/2 \right).
\end{align*}
\end{lem}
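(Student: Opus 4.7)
[Proof proposal for Lemma \ref{lem::standard random matrix inequality of singular values}]
The plan is to combine two classical ingredients: Gaussian concentration of measure for Lipschitz functions, and a comparison inequality for Gaussian processes (Slepian/Gordon) that controls the expected extreme singular values. This is the Davidson--Szarek argument.

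First, I would verify the Lipschitz structure. Viewing $X \in \R^{n \times k}$ as an element of $\R^{nk}$ equipped with the Frobenius (i.e.\ Euclidean) norm, the map $X \mapsto \sigma_{\max}(X) = \norm{X}$ is $1$-Lipschitz by the triangle inequality and the fact that the operator norm is dominated by the Frobenius norm. Similarly, $X \mapsto -\sigma_{\min}(X)$ is $1$-Lipschitz, since $\abs{\sigma_{\min}(X) - \sigma_{\min}(Y)} \leq \norm{X-Y} \leq \norm{X-Y}_F$ by the min-max characterization of singular values. Hence the Borell--Tsirelson--Ibragimov--Sudakov Gaussian concentration inequality immediately yields, for every $\tau > 0$,
\begin{equation*}
\Prob\bigl(\sigma_{\max}(X) \geq \E\sigma_{\max}(X) + \sqrt{n}\,\tau\bigr) \leq \exp(-n\tau^2/2),
\end{equation*}
and analogously for $-\sigma_{\min}(X)$ around its mean.

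The main obstacle, and the substantive step, is pinning down the means. I would use Gordon's comparison inequality (a two-sided refinement of Slepian's) applied to the Gaussian processes
\begin{equation*}
Z_{u,v} := u^\top X v \quad \text{and} \quad Y_{u,v} := \inner{g, u} + \inner{h, v},
\end{equation*}
indexed by $u \in S^{n-1}$ and $v \in S^{k-1}$, where $g \sim \oper N(0, I_n)$ and $h \sim \oper N(0, I_k)$ are independent. A direct covariance calculation shows $\E(Z_{u,v}-Z_{u',v'})^2 \leq \E(Y_{u,v}-Y_{u',v'})^2$ and equality holds when $u=u'$, which are exactly the hypotheses of Gordon's inequality. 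Consequently,
\begin{equation*}
\E\sigma_{\max}(X) = \E \sup_{u,v} Z_{u,v} \leq \E\sup_{u,v} Y_{u,v} = \E\norm{g}_2 + \E\norm{h}_2 \leq \sqrt{n} + \sqrt{k},
\end{equation*}
and the dual form of Gordon (the $\min\max \leq \min\max$ version) yields $\E\sigma_{\min}(X) \geq \sqrt{n} - \sqrt{k}$.

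Finally, plugging these mean estimates into the concentration bounds of the first step gives
\begin{equation*}
\Prob\bigl(\sigma_{\max}(X)/\sqrt{n} - 1 \geq \sqrt{k/n} + \tau\bigr) \leq \exp(-n\tau^2/2),
\end{equation*}
and the symmetric statement for $\sigma_{\min}$, completing the proof. The only delicate piece is Gordon's inequality; everything else is routine. If one wishes to avoid Gordon, an alternative is to invoke the Marchenko--Pastur law together with a non-asymptotic $\varepsilon$-net argument on $S^{n-1} \times S^{k-1}$, but this route gives slightly worse constants and is longer.
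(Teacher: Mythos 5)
Your proposal is correct: the Lipschitz-plus-Borell--TIS concentration step and the Gordon (Slepian-type) comparison giving $\E\sigma_{\max}(X)\leq\sqrt{n}+\sqrt{k}$ and $\E\sigma_{\min}(X)\geq\sqrt{n}-\sqrt{k}$ is exactly the Davidson--Szarek argument, and the constants match the statement after rescaling the deviation by $\sqrt{n}$. Note that the paper itself offers no proof to compare against --- the lemma is simply quoted in the appendix as a well-known random matrix fact --- so your writeup supplies a derivation the authors omitted, and it is the standard and correct one.
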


By setting $\tau = \sqrt{\frac{k}{n}}$, we conclude that with probability at least $1 - \exp(-k/2)$,
\begin{equation}
\label{inq::bound singular values of matrix X^*X}
\begin{split}
( 1 - 2 \sqrt{k/n} )^2 &\leq \sigma_{\min} \left( X^* X/n \right) \\
&\leq \sigma_{\max} \left( X^* X/n \right)\leq ( 2 \sqrt{k/n} + 1 )^2.
\end{split}
\end{equation}

A consequence of this quantity is another singular value bound for the inverse matrix of $X^* X$. We have with probability greater than $1 - \exp(-k/2)$,
\begin{equation}
\label{inq::bound singular values of inverse matrix of X^*X}
\begin{split}
\frac{1}{( 2 \sqrt{k/n} + 1 )^2} &\leq \sigma_{\min} \left( (X^*X/n)^{-1} \right) \\
&\leq \sigma_{\max} \left( (X^* X/n)^{-1} \right)\leq \frac{1}{( 1 - 2 \sqrt{k/n} )^2}.
\end{split}
\end{equation}

From the above two set of inequality and assumption that $k \leq n$, we conclude that with probability greater than $1 - \exp(-k/2)$,
\begin{eqnarray}
\label{inq::spectral norm bound of X^* X}
  \norm{\frac{X^* X}{n} - I} &\leq& 4 \sqrt{\frac{k}{n}} \\
\label{inq::spectral norm bound of inverse of X^* X}
  \norm{\left( \frac{X^* X}{n} \right)^{-1} - I} &\leq& 4 \sqrt{\frac{k}{n}} .
\end{eqnarray}

For random matrices whose rows are i.i.d and have distribution $\oper N(0, \Sigma)$, we can achieve a similar spectral norm bound. We have with probability at least $1 - \exp(-k/2)$
\begin{eqnarray}
\label{inq::spectral norm bound of general X^* X}
  \norm{\frac{X^* X}{n} - \Sigma} &\leq& 4 \sigma_{\max} (\Sigma) \sqrt{\frac{k}{n}} \\
\label{inq::spectral norm bound of general inverse matrix X^* X}
  \norm{\left( \frac{X^* X}{n} \right)^{-1} - \Sigma^{-1}} &\leq& \frac{4}{\sigma_{\min}(\Sigma)} \sqrt{\frac{k}{n}}.
\end{eqnarray}

Finally, the following lemma states an useful concentration inequality on Haar measure \cite{Ledoux_2001_B}.
\begin{lem}
\label{lem::haar measure}
Support $k < n$ and let $f: \R^{n \times k} \mapsto R$ with Lipschitz norm
$$
\norm{f}_{\text{L}} = \sup_{X \neq Y} \frac{f(X)-f(Y)}{X-Y}.
$$
Then if $U$ is distributed according to the Haar measure,
$$
\Prob (f(U) \geq \text{median} (f) + \tau) \leq \exp(- \frac{m \tau^2}{8 \norm{f}_{\text{L}}^2 }).
$$
\end{lem}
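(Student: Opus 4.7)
The plan is to establish this as an instance of the general principle that the Haar measure on a compact Riemannian manifold with positive Ricci curvature enjoys Gaussian-type concentration. Here $U$ is distributed on the Stiefel manifold $V_k(\R^n) = \{ U \in \R^{n \times k} : U^\top U = I_k \}$ equipped with the metric inherited from its embedding in $\R^{n \times k}$, and the key geometric fact is that the Ricci curvature of this manifold, with the appropriate normalization, is bounded below on the order of $n$. This positive curvature lower bound is what drives the Gaussian-type tail in the conclusion, with the dimension $m = n$ appearing in the numerator of the exponent.

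Concretely, I would proceed in three steps. First, invoke the Bakry--\'Emery criterion to deduce from the Ricci lower bound that the Haar measure on $V_k(\R^n)$ satisfies a logarithmic Sobolev inequality with constant of order $1/m$. Second, apply the Herbst argument: for any function $f$ that is $\norm{f}_{\text{L}}$-Lipschitz in the Euclidean (Hilbert--Schmidt) metric, the log-Sobolev inequality implies the sub-Gaussian deviation bound
\begin{equation*}
\Prob \!\left( f(U) \geq \E f(U) + \tau \right) \leq \exp \!\left( - \frac{c \, m \, \tau^2}{\norm{f}_{\text{L}}^2} \right).
\end{equation*}
Third, pass from the mean to the median using the standard fact that for a random variable with such a sub-Gaussian tail around its mean, the mean and median differ by at most a universal multiple of $\norm{f}_{\text{L}}/\sqrt{m}$; carefully tracking the universal constants through Herbst's argument and the mean-to-median transition yields the constant $8$ in the denominator of the exponent as stated in the lemma.

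The main obstacle is the Ricci curvature computation on the Stiefel manifold. A clean way to obtain it is to view $V_k(\R^n)$ as the homogeneous space $O(n)/O(n-k)$ with a normal metric and apply O'Neill's submersion formulas, leveraging the well-known lower bound on the Ricci curvature of $O(n)$ itself. An alternative, more elementary approach is to couple $U$ with a Gaussian matrix $G$ via Gram--Schmidt/QR, so that $U = Q(G)$, and then lift $f$ to $\tilde f(G) := f(Q(G))$ and apply Gaussian concentration directly; the difficulty there is that $Q$ is not uniformly Lipschitz on all of $\R^{n \times k}$, since it blows up near rank-deficient matrices, so one must truncate to a high-probability event where $G$ has well-conditioned columns and carefully account for the truncation error. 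For these reasons, the curvature/log-Sobolev route, as in Ledoux's monograph referenced by the paper, is the more robust option.
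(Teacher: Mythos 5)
The paper does not actually prove this lemma: it appears in the appendix under ``Some concentration inequalities,'' stated without proof and attributed to Ledoux's monograph, so there is no in-paper argument to compare yours against. That said, your proposed route --- bound the Ricci curvature of the Stiefel manifold $V_k(\R^n) \cong O(n)/O(n-k)$ from below by a quantity of order $n$ (via O'Neill's submersion formula from the known curvature of $O(n)$), deduce a logarithmic Sobolev inequality by Bakry--\'Emery, and conclude sub-Gaussian concentration by Herbst --- is precisely the standard proof found in the cited reference, and your outline is sound; you are also right to reject the Gaussian/QR coupling, since the orthogonalization map is not globally Lipschitz. Two refinements are worth recording. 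First, the lemma is stated around the median, and the cleaner classical route to that exact form is the L\'evy--Gromov isoperimetric comparison (the Gromov--Milman theorem), which yields $\Prob\left(f(U) \geq \mathrm{median}(f) + \tau\right) \leq \exp\left(-(n-2)\tau^2/(8\norm{f}_{\text{L}}^2)\right)$ directly; the Herbst argument concentrates around the mean, and passing back to the median costs an additive shift of order $\norm{f}_{\text{L}}/\sqrt{n}$ that degrades the constant, so your claim of recovering exactly the constant $8$ by that detour is optimistic (harmless here, since the paper only uses the inequality up to constants). Second, the curvature machinery gives Lipschitz concentration with respect to the geodesic metric on the manifold, whereas the lemma's Lipschitz norm is taken in the ambient Hilbert--Schmidt distance; since the geodesic distance dominates the chordal one, a Euclidean-Lipschitz $f$ is geodesically Lipschitz with the same constant, a one-line observation your sketch should make explicit.
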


\bibliographystyle{IEEEtran}
\bibliography{IEEEabrv,all_references}

\end{document}